 \documentclass[11pt,letterpaper]{amsart}
\numberwithin{equation}{section}
\usepackage{amsmath,amsthm,amsfonts,amscd,eucal}

\usepackage{amssymb}
\hfuzz12pt \vfuzz12pt


\def\ca{{\mathcal A}}
\def\cb{{\mathcal B}}

\def\cs{{\mathcal S}}


\def\ga{{\mathfrak A}}

\def\gam{{\mathfrak M}}

\def\gx{{\mathfrak X}}


\def\bc{{\mathbb C}}

\def\bm{{\mathbb M}}

\def\a{\alpha}

\def\g{\gamma}  \def\G{\Gamma}
\def\d{\delta}

\def\r{\rho}

\def\t{\tau}
\def\f{\varphi} \def\F{\Phi}
\def\th{\theta}
\def\om{\omega} \def\Om{\Omega}

\def\ots{\overline{\otimes}}

\def\id{\mathop{\rm id}}

\def\ker{\mathop{\rm Ker}}
\def\idd{{\bf 1}\!\!{\rm I}}

\newcommand{\ty}[1]{\mathop{\rm {#1}}}

\newtheorem{thm}{Theorem}

\newtheorem{cor}{Corollary}
\newtheorem{prop}{Proposition}

\newtheorem{defin}{Definition}
\newtheorem{rem}{Remark}
\theoremstyle{definition}

\begin{document}

\title[hilbert modules and reduction theory]
{On the reduction theory of $W^{*}$-algebras by Hilbert modules}
\author{Francesco Fidaleo}
\address{Francesco Fidaleo\\
Dipartimento di Matematica\\
Universit\`{a} di Roma ``Tor Vergata''\\
Via della Ricerca Scientifica 1, 00133 Roma, Italy.}
\email{{\tt fidaleo@axp.mat.uniroma2.it}}
\author{L\'aszl\'o Zsid\'o}
\address{L\'aszl\'o Zsid\'o\\
Dipartimento di Matematica\\
Universit\`{a} di Roma ``Tor Vergata''\\
Via della Ricerca Scientifica 1, 00133 Roma, Italy.}
\email{{\tt zsido@axp.mat.uniroma2.it}}

\begin{abstract}
\vskip 0.2cm

We deal with the reduction theory of a $W^*$-algebra $M$ along a $W^*$-subalgebra $Z$ of the centre of $M$.
This is done by using Hilbert modules naturally constructed by suitable spatial representations of the abelian
$W^*$-algebra $Z$. We start with an exhaustive investigation of such kind of Hilbert modules, which is also
of self-contained interest. After explaining the notion of the reduction in this framework, we exhibit the reduction
of the standard form of a $W^*$-algebra $M$ along any $W^*$-subalgebra of its centre, containing the unit
of $M$. In a forthcoming paper, this result is applied to study the structure of the standard representation
of the $W^*$-tensor product $M_1\ots_Z M_2$ of two $W^*$-algebras $M_1$ and $M_2$ over a common
$W^*$-subalgebra $Z$ of the centres.

\vskip.5cm
\noindent
{\bf Mathematics Subject Classification}: 46L10, 46L08.\\
{\bf Key words}: Von Neumann algebras, Reduction Theory, Hilbert modules,
standard von Neumann algebras. \\
{\bf Acknowledgement}: The first named author acknowledges the MIUR Excellence Department project awarded
to the Dept. of Math., Univ. of Rome "Tor Vergata", CUP E83C23000330006, as well as INdAM-GNAMPA.
\end{abstract}

\maketitle

\section{Introduction}

We start with some notations frequently used in the sequel. For such a purpose,
we shall use the terminology of \cite{SZ1}. In particular,
\begin{itemize}
\item $(\, {\bf\cdot}\, |\, {\bf\cdot}\, )$ will denote the inner product of a (complex) Hilbert space and it will
be assumed linear in the first variable and antilinear in the second variable;
\item $\cb(H)$ will denote the $C^*$-algebra of all bounded linear operators on the Hilbert space $H$,
with the identity simply denoted by $\idd_H$;

\noindent If $H_1\;\!, H_2$ are two Hilbert spaces, then $\cb (H_1\;\! ,H_2)$ will stay for the
Banach space of all bounded linear operators $H_1\longrightarrow H_2\;\!$.

\item $\idd_M$ and $Z(M)$ will denote the identity and centre of a $W^*$-algebra $M$, respectively;
\item the $w$-{\it topology} on a $W^*$-algebra $M$ is the weak${}^*$topology, while
the $s$-{\it topology} on $M$ is the locally convex vector space topology generated by the family
of seminorms
\smallskip

\centerline{$p_\f(x):=\f(x^*x)^{1/2}\;\! ,\quad x\in M\;\! ,$}
\smallskip

\noindent where $\f$ runs over all normal positive forms on $M\;\! ;$
\item a {\it Maximal Abelian Sub-Algebra} (MASA for short) $A\subset M$ of a $W^*$-algebra $M$ is a subset of $M$ satisfying $A=A'\bigcap M$;
\item a {\it projection} $p$ belonging to a $W^*$-algebra $M$, is understood as a selfadjoint one
({\it i.e.} $p^*p=p$);
\item $l_M(x)$ and $r_M(x)$ will stay for the {\it left and right support-projection of an operator} $x$
in some $W^*$-algebra $M$, where $l_M(x)=r_M(x)=:s_M(x)$ if $x$ is normal,
$z_M(x)$ for the {\it central support-projection} of $x\in M$, and $s_M(\varphi)$ for the {\it support-projection
of a normal positive linear functional} $\varphi$ on $M\;\!$;
\item a projection $e$ in a $W^*$-algebra $M$ is called of {\it countable type} if 
any family of mutually orthogonal non-zero projections in $M$, majorised by $e\;\!$,
is at most countable; if $\idd_M$ is of countable type then we say that the $W^*$-algebra $M$
is of countable type;
\item a projection $e$ in a $W^*$-algebra $M$ is called {\it piecewise of countable type}
if there exists a family $(p_{\iota})_{\iota}$ of projections in $Z(M)$
with $\sum\limits_{\iota}p_{\iota}=\idd_M$ such that $e p_{\iota}$ is of countable type for
each $\iota\;\!$.
\item a {\it spatial representation} $(\pi,H_\pi)$ of the $W^*$-algebra $M$ on a Hilbert space
$H_\pi$ is a unital, faithful, normal $*$-homomorphism $\pi:M\longrightarrow \cb (H_\pi)\;\!$;
\item a {\it conjugation} on a Hilbert space $H$ is an involutive, isometrical, antilinear map $H\longrightarrow H$;
\item a von Neumann algebra $M\subset\cb(H)$ is said to be in {\it standard form} if there exists a
conjugation $J:H\longrightarrow H$ such that the mapping $x\longmapsto Jx^*J$ is a $*$-anti-isomorphism of $M$
onto its commutant $M'$, acting identically on the centre;
\item a {\it standard representation} of a $W^*$-algebra is a spatial representation whose image is a
von Neumann algebra in standard form; each $W^*$-algebra admits a standard representation, which
is unique up to unitary equivalence (e.g. \cite{SZ1}, 10.14 and 10.15).
\end{itemize}

The classical reduction theory of von Neumann (\cite{vN}, see also Part II of \cite{D1}, Chapter 3 of \cite{Sa},
\cite{Z}) is a tool to describe general von Neumann algebras on separable Hilbert spaces in terms of
factors, that is von Neumann algebras with trivial centre. Thus it serves to reduce work with general
von Neumann algebras on separable Hilbert spaces to work with factors.

More precisely, in the formulation of \cite{Sa}, Chapter 3 or \cite{Z}, any von Neumann algebra $M$
on a separable Hilbert space is embedded in some $L^\infty\big(\mu\, ,\cb(K)\big)\;\!$, where $\mu$
is a finite Borel measure on a compact metric space $\Omega$ and $K$ is a separable Hilbert
space, such that the centre $Z(M)$ of $M$ corresponds to the scalar valued functions in
$L^\infty\big(\mu\, ,\bc\big) \equiv L^\infty\big(\mu )\;\!$.

\noindent By such an embedding, the image of $M$ turns out to be
\smallskip

\centerline{$\big\{ F\in L^\infty\big(\mu\, ,\cb(K)\big) ; F(\omega )\in M_{\omega}\text{ $\mu$-almost
everywhere}\big\}$}
\smallskip

\noindent with $M_{\omega}\subset \cb(K)\;\! ,\omega\in\Omega\;\! ,$ a family of factors, and the
commutant of the above image is
\smallskip

\centerline{$\big\{ F\in L^\infty\big(\mu\, ,\cb(K)\big) ; F(\omega )\in (M_{\omega})'\text{ $\mu$-almost
everywhere}\big\}\;\! .$}
\smallskip

Noticing that $L^\infty\big(\mu\, ,\cb(K)\big)$ is a type $\ty{I}$ von Neumann algebra with
centre $L^\infty\big(\mu )$ and it acts by $L^\infty\big(\mu )$-module maps on the Hilbert
$L^\infty\big(\mu )$-module $L^2\big(\mu\, ,K\big)$, it is natural to try to overcome the
separability restriction of von Neuman's reduction theory by looking to embed any
von Neumann algebra $M$ in a type $\ty{I}$ von Neumann algebra $N$ such that
the centre $Z(M)$ goes onto $Z(N)\;\!$. Since, with $e$ an abelian projection in $N$
of central support $\idd_N\;\!$, $N$ is acting by the left multiplication operators on the
Hilbert $Z(N)$-module $Ne$ and the left multiplication operators are $Z(N)$-module
maps, identifying $M$ with its image in $N$, one obtains a representation of $M$ as an
algebra of $Z(M)$-module maps on a Hilbert $Z(M)$-module.
Such a representation could replace the reduction theory of von Neumann by working with
general von Neumann algebras.
 
This strategy was used in \cite{SZ} to prove the commutation theorem
for tensor products over von Neumann subalgebras by transposing the proof
given in \cite{RV-D} for ordinary tensor products to the Hilbert module setting.
For this purpose, the geometry of the Hilbert $Z(N)$-module $Ne$, with $N$ a type $\ty{I}$ von
Neumann algebra and $e\in N$ an abelian projection of central support $\idd_N\;\!$, was
thoroughly discussed and then von Neumann subalgebras $M\subset N$ containing $Z(N)\;\!$,
hence satisfying $Z(N)\subset Z(M)$ were investigated.

In this paper, after shortly commenting on the idea of reduction by Hilbert module representations in
Section \ref{sec2} and exposing a summary of the general theory of Hilbert modules in Section \ref{sec3},
in Section \ref{sec4} we perform a systematic study of a class of Hilbert modules, naturally associated
to a type $\ty{I}$ $W^*$-algebra $N$ and an abelian projection in $N$of central support $\idd_N\;\!$,
which are basic for the proposed reduction theory. In particular, we give a complete description
of the structure of these Hilbert modules.

In Section \ref{sec5} we formulate explicitely the reduction by Hilbert module representations
and we comment the nature of the proposed reduction.
Finally, in Section \ref{sec6} we show how standardness of a von Neumann algebra $M$ can be encoded
in its reduction along a von Neumann subalgebra $\idd_M\in Z\subset Z(M)\;\!$.
Our main result is a characterization of standardness along $Z\;\!$: Theorem \ref{st1}.
It will be applied in \cite{FZ1} in the theory of tensor products of von Neumann algebras over
von Neumann subalgebras.
\smallskip

\section{The idea of reduction by Hilbert module representations}
\label{sec2}

A {\it reduction} of a von Neumann algebra $M\subset \cb(H)$ usually means, roughly
speaking, a decomposition of $M$ "along its centre" $Z(M)$ in a family of factors.

Representing the centre $Z(M)$ as $L^\infty(\mu )\;\!$, with $\mu$ a Radon measure
(regular Borel measure, finite on compact sets) on a locally compact topological space
$\Omega\;\!$, one can try to represent the elements of $M$ by functions in some
$L^\infty\big(\mu\, ,\cb(K)\big)$ such that, for an appropriate family of factors
$M_\omega\subset \cb(K)\, ,\omega\in\Om\,$, $F\in L^\infty\big(\mu\, ,\cb(K)\big)$
corresponds to an element of $M$ if and only if $F(\omega )\in M_\omega$ for almost
every $\omega\in\Om\,$. Thus we can identify $M$ with the algebra of all measurable
sections of an appropriate measurable fibration over $\Om\,$.

More generally, we can seek a decomposition of $M$ along a von Neumann subalgebra
$\idd_M\in Z\subset Z(M)$ of $Z(M)$. In fact, representing $Z$ as before by some $L^\infty(\mu )\;\!$,
we can try to identify $M$ with a subalgebra of an appropriate $L^\infty\big(\mu\, ,\cb(K)\big)$,
whose elements $F$ are described by the condition $F(\om)\in M_\om$ for almost every $\om\in\Om\,$,
where $M_\om\subset \cb(K)\;\! ,\om\in\Omega\;\!$, are von Neumann algebras, not necessarily
factors.

Such a decomposition yields also a decomposition of the centre in the family $Z(M_\om),\om\in\Om\;\!$,
which in the case $Z=Z(M)$ would reduce to $Z(M_\om)=\bc$ for almost every $\om\in\Om\;\!$.

In the other extreme case $Z=\bc$, we would have $\Om=\{\om_0\}$, $K=H$, $M_{\om_0}=M$.

This program can be satisfactorily completed only if $H$ is separable, in which
case also $K$ will be separable and $\Omega$ can be chosen metrizable
(John von Neumann 1949, \cite{vN}).

On the other hand, if we represent the centre of $M$ as $C(\Omega )\;\!$, where $\Omega$ is
the Gelfand spectrum of the centre $Z(M)\;\!$, the hyperstonean
space of all unital algebra homomorphisms $\omega : Z(M)\longrightarrow\bc\;\!$,
then we can try to extend every $\omega\in\Omega$ to a $*$-homomorphism
$\pi_\omega : M\longrightarrow \widetilde{M_\omega}\,$, where
$\widetilde{M_\omega}$ is a factor, such that, for each $x\in M$, the elements
$ \pi_\omega (x)\in \widetilde{M_\omega}$ depend continuously on
$\omega\in\Omega$ in an appropriate sense. In this way, we can identify
$M$ with the continuous sections of a kind of fibration over $\Omega\,$. 

For this purpose, we have first to get a family of norm-closed two-sided ideals
$(\gam_\omega )_{\omega\in\Omega}$ of $M$ with
\medskip

\centerline{$\displaystyle \gam_\omega \cap Z(M) = \ker (\omega )\text{ for all }\omega\in\Omega\;\! ,$}
\medskip

\centerline{$\displaystyle \bigcap\limits_{\omega\in\Omega} \gam_\omega =\{ 0\}\;\! ,$}
\medskip

\noindent and then to complete each quotient $C^*$-algebra $M_\omega =
M/\gam_\omega$ in an appropriate way to a von Neumann algebra
$\widetilde{M_\omega}$ with trivial centre.

In the case of a finite von Neumann algebra $M$, we can do this by choosing
for $\gam_\omega$ the maximal ideal of $M$ with $\gam_\omega \cap Z(M)
= \ker (\omega )$, and it will follow that already $M_\omega$ is a finite factor.
Thus a further completion is not needed (F. B. Wright 1954, \cite{W}, J.Vesterstr\o m 1970, \cite{V}).

This program can be
carried out also in the case of a semifinite von Neumann algebra (\c{S}. Str\u{a}til\u{a}-L. Zs. 1973
\cite{SZ2, SZ3}), but the obtained representation
is difficult to work with.

If we set for $\gam_\omega$ the smallest two-sided ideal of $M$ containing
$\ker (\omega )$ (the so-called {\it Glimm Ideal}), then the functions
\[
\Omega\ni\omega\longmapsto \| x/\gam_\omega\|\, ,\quad x\in M
\]
are continuous (J. Glimm 1960 \cite{G}), so $(M_\omega )_{\omega\in\Omega}$ is, with todays
terminology, a continuous field of $C^*$-algebras. On the
other hand, the $C^*$-algebras $M_\omega\,$, even if they are not factors in
general, are primitive (H. Halpern 1969 \cite{H1, H2}). Thus this not fully
completed reduction theory can be used to treat many problems
concerning, for example, commutators, norms, numerical ranges
in von Neumann algebras. However, this kind of reduction is not appropriate to manage problems
concerning spatial representations, as for example, the standard form.

Let us come back to the reduction theory of von Neumann, in which a von Neumann algebra
$\idd_H\in M\subset \cb(H)$ on a separable Hilbert space is embedded in a type $\ty{I}$
von Neumann algebra of the form $L^\infty\big(\mu\, ,\cb(K)\big)$ with $K$ separable, which of course acts by
pointwise left multiplication on the Hilbert space $L^2(\mu\, ,K)$. We point out that $L^2(\mu\, ,K)$
is actually a Hilbert module over the centre $Z(M)\approx  L^\infty(\mu)$ of $M$, and
$L^\infty\big(\mu\, ,\cb(K)\big)$ acts by module maps on $L^2(\mu\, ,K)$.
In order to arrive to this structure, we have:
\begin{itemize}
\item[(i)] to consider the type $\ty{I}$ von Neumann algebra 
$$
N:=Z(M)'\supset M, M'\,;
$$
\item[(ii)] to consider an abelian projection $e\in N$ of central support $\idd_N$;
\item[(iii)] to represent $N$ as the von Neumann tensor product of $eNe\approx Z(M)$
with a type $\ty{I}$ factor $\cb(K)$ (assuming here of course that $N$ is homogeneous);
\item[(iv)] after considering a representation of $eNe$ as $L^\infty(\mu)$, to identify
$N\approx eNe\overline{\otimes}\cb(K)$ with
$L^\infty(\mu)\overline{\otimes}\cb(K)=L^\infty\big(\mu\, ,\cb(K)\big)$, which acts on
$L^2(\mu\, ,K)$ by pointwise left multiplication.
\end{itemize}

To overcome the separability restriction, required by measure theoretical reasons, we skip
the identification of $eNe\approx Z(M)$ with $L^\infty(\mu)$. Thus we keep only the picture as follows
(cf. \cite{SZ}, comments before Section 1.1).

First, we consider on $Ne$ the left Hilbert module structure over $Z(M)$ given by the module
inner product

\centerline{$(xe\!\mid\!ye)_{\substack{ {} \\ Ne}}=z\in Z(M)\;\! ,$}
\medskip

\noindent where $z$ is uniquely defined by $ze=ey^*xe$.
Second, let $N$ act on $Ne$ by the module maps given by left multiplication. Here the reduction of
$M\subset N$ consists in considering its left action by module maps over $Z(M)$ on $Ne$.

Similarly as in the von Neumann approach, we can start with any von Neumann subalgebra
$\idd_M\in Z\subset Z(M)$. Then, for an abelian projection  $e\subset N:=Z'$ of central support
$\idd_N$ at our choice, we can follow the above construction.

We notice that also $M'\subset N$ will be reduced simultaneously with $M$, as in the von Neumann
approach.

\section{Preliminaries on  Hilbert modules}
\label{sec3}

Let us recall some basic notions on Hilbert
modules (see \cite{K1, Pa} and Chapter 15 of \cite{W-O}).

Let $B$ be a $C^{*}$-algebra, and $X$ a left $B$-module. Let further
$$
(\,{\bf\cdot}\mid{\bf\cdot}\,)_{X}: X\times X\to B
$$
be a conjugate bilinear map, linear in the first variable and antilinear in the second one, satisfying the
following conditions:
\begin{itemize}
\item[(i)] $(bx\!\mid\!y)_{X}=b(x\!\mid\!y)_{X}$, and\\
$\quad(x\!\mid\!by)_{X}=(x\!\mid\!y)_{X}b^*\,,\,\, x,y\in X\,, b\in B$;
\item[(ii)] $(x\!\mid\!x)_{X}\geq0\,,\quad x\in X$;
\item[(iii)] $(x\!\mid\!x)_{X}=0$ only if $x=0$.
\end{itemize}
By the polarisation formula, (ii) implies also
\begin{itemize}
\item[(iv)] $(x\!\mid\!y)_{X}=(y\!\mid\!x)_{X}^*\,,\,\, x,y\in X$.
\end{itemize}
Putting
$$
\|x\|_{X}:=\|(x\!\mid\!x)_{X}\|^{1/2}\,,\quad x\in X\;\! ,
$$
one obtains a norm $\|x\|_{X}$ on $X$ satisfying the Schwarz inequality
$$
\|(x\!\mid\!y)_{X}\|\leq\|x\|_{X}\|y\|_{X}\,,\quad x,y\in X
$$
(see \cite{Pa}, Prop. 2.3). If this norm is complete, the $B$-module $X$ equipped with the $B$-valued
inner product $(\,{\bf\cdot}\mid{\bf\cdot}\,)_{X}$, is said to be a {\it Hilbert $B$-module}.

Let $X$, $Y$ be Hilbert $B$-modules. A bounded linear (antilinear) map
$T:X\to Y$ is said to be adjointable if there exists another, necessarily 
bounded, linear (antilinear) map $T^{*}:Y\to X$ such that
$(Tx\!\mid\!y)_{Y}=(x\!\mid\!T^*y)_{X}$
($(Tx\!\mid\!y)_{Y}=(T^*y\!\mid\!x)_{X}$), $x,y\in X$.
It is immediate to verify that a linear (antilinear) adjointable map, as well as its adjoint, 
is automatically $B$-linear ($B$-antilinear).

Let $X_i$, $i=1,2,3$ be Hilbert $B$-modules and 
$$
T:X_1\to X_2\,,\quad S:X_2\to X_3
$$
are bounded $B$-linear ($B$-antilinear) maps, then the composition $ST:X_1\to X_3$ will be a
bounded $B$-linear or $B$-antilinear map (depending on the nature of $S$ and $T$) satisfying
$$
\|ST\|\leq\|S\|\|T\|\,.
$$
If $S$ and $T$ are adjointable, bounded, both linear or antilinear maps, then the composition
$ST:X_1\to X_3$ is still adjointable with
$$
(ST)^*=T^*S^*\,.
$$

We denote the set of all the bounded $B$-module maps (i.e bounded $B$-linear maps) from a
Hilbert $B$-module $X$ into another Hilbert $B$-module $Y$
by $\cb_{B}(X,Y)$. Clearly, $\cb_{B}(X,Y)$ is complete w.r.t. the operator norm, that is a Banach
space. The subset of $\cb_{B}(X,Y)$ consisting of all the adjointable linear maps from $X$ into
$Y$ will be denoted by $\ca_{B}(X,Y)$. A straightforward verification leads to
\begin{equation}
\label{cstnr}
\|T^*T\|=\|T\|^2\,,\quad T\in \ca_{B}(X,Y)\,,
\end{equation}
and in particular
$$
\|T\|=\|T^*\|\,,\quad T\in \ca_{B}(X,Y)\,.
$$

A linear (antilinear) adjointable map $T:X\to Y$ is said to be
{\it $B$-unitary} ({\it $B$-antiunitary}) if
\begin{equation}
\label{uni}
T^{*}T=\id{}_{X}\,,\quad TT^{*}=\id{}_{Y}\,.
\end{equation}

A particular case of interest is when $X=Y$. As customary, we set
$\cb_{B}(X):=\cb_{B}(X,X)$ and $\ca_{B}(X):=\ca_{B}(X,X)$. Clearly,
$\cb_{B}(X)$ is a unital Banach algebra w.r.t. the operator norm, and
$\ca_{B}(X)$ is a unital subalgebra of $\cb_{B}(X)\;\!$, stable under the
involution $T\to T^*$.
By \eqref{cstnr}, $\ca_{B}(X)$ endowed with the above 
$*$-operation, becomes a unital $C^*$-algebra (see \cite{Pa}, Rem. 2.5).

We notice that $B$ itself is a left Hilbert $B$-module w.r.t. the $B$-valued inner product
$$
(a\!\mid\!b)_{B}:=ab^*\,,\quad a,b\in B\,.
$$

A Hilbert $B$-module $X$ is called {\it self-dual} if any $B$-linear ($B$-antilinear) map
$T:X\to B$ is of the form
\begin{equation}
\label{linant}
Tx=(x\!\mid\!x_T)_{B}\,,\,\,
\big(Tx=(x_T\!\mid\!x)_{B}\big)\,,\quad x\in X
\end{equation}
for some, necessarily unique, $x_T\in X$.
It is easy to check that any $B$-linear ($B$-antilinear) map $T:X\to B$ of the form \eqref{linant} is abjointable.
The reverse holds true when $B$ is unital. Indeed, if the $B$-linear ($B$-antilinear) map $T:X\to B$ is
adjointable, \eqref{linant} holds with $x_T=T^*\idd_B$.
For non unital $C^*$-algebra $B$, and Hilbert $B$-module $X\equiv B$, the map $T:X\to B$ given by $Tx=x$
is elementarwise adjointable but it cannot be expressed as in \eqref{linant}.

\section{Hilbert modules associated to type $\ty{I}$ $W^*$-algebras}
\label{sec4}

For reduction of von Neumann algebras by Hilbert module representations, we shall
use Hilbert modules of the following type. Let $N$ be a type $\ty{I}$ $W^*$-algebra,
and $e\in N$ an abelian projection with central support $z_{N}(e)=\idd_N\;\!$.
Denote $Z:=Z(N)$. Then the relation
$$
exe=\F_e(x)e\,,\quad x\in N\,,
$$
defines a normal conditional expectation $\F_e:N\longrightarrow Z$ of support $e\;\!$. In such a way,
$Ne$ becomes a Hilbert $Z$-module with the $Z$-valued inner product
$$
(x\!\mid\!y)_{\substack{ {} \\ Ne}}:=\F_e(y^*x)\,,\quad x,y\in Ne\, .
$$
We notice that, after identifying $Z$ with $Ze$ by the $*$-isomorphism
$$
Z\ni z\longmapsto ze\in Ze\,,
$$
the above inner product is simply given by $(x\!\mid\!y)_{\substack{ {} \\ Ne}}\equiv y^*x$.
By this identification
\begin{equation}
\label{op-norm}
\|x\|_{\substack{ {} \\ Ne}}=\big\| (x\!\mid\!x)_{\substack{ {} \\ Ne}}\big\|^{1/2}\equiv\|x^*x\|^{1/2}
=\|x\|\, ,\quad x\in Ne\, .
\end{equation}
It will be convenient to use also the notation
\begin{equation*}
|x|_{\substack{ {} \\ Ne}}:= (x\!\mid\!x)_{\substack{ {} \\ Ne}}^{\;\! 1/2}= \F_e(x^*x)\in Z^+\, ,\quad x\in Ne\, ,
\end{equation*}
so that $\|x\|_{\substack{ {} \\ Ne}}=\|\;\! |x|_{\substack{ {} \\ Ne}} \|$ for all $x\in Ne\;\! .$
We recall (see \cite{SZ}, Section 1.1, page 297) :
\begin{equation*}
\begin{split}
| (x\!\mid\!y)_{\substack{ {} \\ Ne}} |\leq\;& |x|_{\substack{ {} \\ Ne}} |y|_{\substack{ {} \\ Ne}}\,
(\text{Schwarz inequality})\;\! ,\hspace{14 mm}x\;\! ,y\in Ne\;\! , \\
|x+y|_{\substack{ {} \\ Ne}}\leq\;& |x|_{\substack{ {} \\ Ne}} + |y|_{\substack{ {} \\ Ne}}\,
(\text{Minkowski inequality})\;\! ,\quad x\;\! ,y\in Ne\;\! .
\end{split}
\end{equation*}

\noindent We recall also that a net $(x_{\iota})_{\iota}$ in $Ne$ is $s$-convergent to $x\in Ne$
if and only if $| x_{\iota}-x |_{\substack{ {} \\ Ne}}\xrightarrow{\;\; \iota\;\,} 0$ in the $s$-topology,
in which case also $x_{\iota}^{\;\! *}\xrightarrow{\;\; \iota\;\,} x^*$ in the $s$-topology, hence
$x_{\iota}\xrightarrow{\;\; \iota\;\,} x$ in the $s^*$-topology (see \cite{SZ}, Section 1.1, page 298).
\smallskip

Let us report some particularities of this kind of Hilbert modules, starting with
the following result.

\begin{thm}
\label{nopr1}
Let $N$ be a type $\ty{I}$ $W^*$-algebra with centre $Z$.
\begin{itemize}
\item[(i)]  If $e\in N$ is an abelian projection with central support $z_N(e)=\idd_N$, and $\gx\subset Ne$ is a $s$-closed $Z$-submodule, then there exists a unique
projection $p_\gx\in N$ such that $\gx =p_\gx Ne$.
\item[(ii)] If $p$ is a projection in $N$,
and $f\in pNp$ an abelian projection with central support $z_{pNp}(f)=p\;\!$,
then there exists an abelian projection $e\in N$ with $z_N(e)=\idd_N$
such that

\centerline{$ep=f=pe\, ,\; z_{N}(f)=z_N(p)\, ,\; z_{N}(f)z_{N}(e-f)=0\;\! .$}
\smallskip

\noindent Consequently, $pxe=(pxp)f$ for each $x\in N$, hence
$$
(pNp)f=pNe\subset Ne
$$
is a $s$-closed $Z$-submodule of $Ne$.
\end{itemize}
\end{thm}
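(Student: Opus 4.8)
For part (i), the plan is to exploit the fact that a type $\ty{I}$ $W^*$-algebra is, up to spatial equivalence, an algebra of module maps on $Ne$, and that $s$-closed $Z$-submodules of $Ne$ should correspond to range projections. Concretely, I would consider the set $\gpb:=\{y^*x \; : \; x,y\in\gx\}$ together with finite sums; using the identification $Z\equiv Ze$ and the inner product $(x\!\mid\!y)_{Ne}\equiv y^*x$, this is the ``$Z$-linear span'' of inner products of elements of $\gx$. The first step is to show that the $w$-closed linear span of $\gx^*\gx$ inside $eNe\equiv Z$ is a $w$-closed ideal of $Z$, hence equals $qe$ for a unique projection $q\in Z$; this $q$ will turn out to be $z_N(p_\gx)$. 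More importantly, I would look at $\gx\gx^*\subset N$: its $w$-closed linear span is a hereditary $w$-closed subalgebra of $N$ that is a left ideal once we also close under left multiplication by $N$. The key point is that because $e$ is abelian with full central support and $\gx\subset Ne$, the left-ideal $N\gx$ generated by $\gx$ is $w$-closed (here the $s$-closedness of $\gx$ as a submodule, together with the isometric identification $\|\cdot\|_{Ne}=\|\cdot\|$ from \eqref{op-norm} and the equivalence of $s$-convergence in $Ne$ with $s$-convergence of $|x_\iota-x|_{Ne}$, does the work), so $N\gx=N p_\gx$ for a unique projection $p_\gx\in N$. Then I must check $p_\gx Ne=\gx$: the inclusion $\gx\subset p_\gx Ne$ is immediate since $\gx\subset N\gx=Np_\gx$ and $\gx\subset Ne$ forces $\gx\subset p_\gx Ne$; for the reverse inclusion one uses that $p_\gx$ lies in the $s$-closure of $N\gx\cap(\text{self-adjoint part})$ suitably, and that $e$ being abelian makes $p_\gx Ne$ no larger than $\gx$ — this is where abelianness of $e$ is essential, since it guarantees $e N e$ is commutative and the ``column space'' $p_\gx N e$ is recovered from its inner products. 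Uniqueness of $p_\gx$ follows because $pNe=p'Ne$ with $p,p'$ projections, together with $z_N(e)=\idd_N$, forces $p=p'$ (apply right multiplication by $e$-supported elements and use that $z_N(e)=\idd_N$ separates projections).

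For part (ii), I start from the abelian projection $f\in pNp$ with $z_{pNp}(f)=p$ and want to enlarge it to an abelian projection $e\in N$ with $z_N(e)=\idd_N$ and $ep=f=pe$. Since $N$ is type $\ty{I}$, so is $N(\idd_N-z_N(p))$, and it therefore contains an abelian projection $e_0$ with central support $\idd_N-z_N(p)$ (standard structure theory of type $\ty{I}$ algebras). I would like to set $e:=f+e_0$, but that need not have central support $\idd_N$ unless I arrange $z_N(e_0)=\idd_N-z_N(f)$; note $z_N(f)=z_N(p)$ because $z_{pNp}(f)=p$ implies the central support computed in $N$ of $f$ is the same as that of $p$ (a projection and an abelian projection with full central support in the corner share the ambient central support). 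So choosing $e_0\in N$ abelian with $z_N(e_0)=\idd_N-z_N(p)=\idd_N-z_N(f)$ and putting $e:=f+e_0$, the two summands are orthogonal (they live under orthogonal central projections $z_N(p)$ and $\idd_N-z_N(p)$), $e$ is a projection, $e$ is abelian (orthogonal sum of abelian projections under orthogonal central projections), $z_N(e)=z_N(f)+z_N(e_0)=\idd_N$, and $ep=fp+e_0p=f+0=f=pe$ since $fp=pf=f$ (as $f\in pNp$) and $e_0p=0$. Moreover $z_N(f)z_N(e-f)=z_N(f)z_N(e_0)=z_N(p)(\idd_N-z_N(p))=0$, and $z_N(f)=z_N(p)$ as already noted.

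It remains to verify the last assertions: for $x\in N$, $pxe=pxpe+px(\idd_N-p)e$; the second term vanishes because $(\idd_N-p)e=(\idd_N-p)f=0$, so $pxe=pxpe=(pxp)e=(pxp)f$ using $pe=f$ once more; hence $pNe=(pNp)f$. That this is an $s$-closed $Z$-submodule of $Ne$ follows because $p\in N$ and $e\in N$, so $pNe=p(Ne)$ is the range of the projection $L_p$ (left multiplication by $p$) acting as a $Z$-module map on $Ne$, and the range of a self-adjoint idempotent module map on a Hilbert module of this kind is $s$-closed — equivalently, apply part (i) in reverse, or observe directly that if $px_\iota e\to y$ in the $s$-topology of $Ne$ then $y=pye$ since left multiplication by $p$ is $s$-continuous on bounded sets and the net may be taken bounded. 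The main obstacle I anticipate is the proof in part (i) that $N\gx$ is $w$-closed and that $p_\gx Ne$ does not overshoot $\gx$; this is precisely the place where one must use that $e$ is \emph{abelian} with \emph{full central support}, converting the a priori weak ``module'' hypothesis on $\gx$ into the rigid ``range projection'' conclusion. Everything in part (ii) is then bookkeeping with central supports and orthogonality of corners.
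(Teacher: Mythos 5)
Your part (ii) is, up to notation ($e_0$ in place of the paper's $f_1$), exactly the paper's argument: adjoin an abelian projection of central support $\idd_N-z_N(f)$, use $z_N(f)=z_N(p)$ (which the paper derives carefully as the chain of equivalences \eqref{centr.supp}, and which you assert; that is acceptable), and verify the listed identities. One local error: to get $pxe=(pxp)f$ you claim $(\idd_N-p)e=(\idd_N-p)f=0$. This is false: $(\idd_N-p)e=e-pe=e-f=e_0$, which is nonzero whenever $z_N(p)\neq\idd_N$. The cross term $px(\idd_N-p)e=pxe_0$ does vanish, but for a different reason: $e_0\leq\idd_N-z_N(p)$ is central-orthogonal to $p\leq z_N(p)$, so $pxe_0=p\,z_N(p)\,x\,(\idd_N-z_N(p))\,e_0=0$. (The paper instead uses $z_N(f)e=f$ directly.) The final $s$-closedness claim is fine.

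Part (i), which the paper delegates entirely to \cite{SZ}, Lemma 1.6, is where your proposal has a genuine gap. You pass to the $w$-closed left ideal generated by $\gx$ and write $N\gx=Np_\gx$; but for a $w$-closed left ideal $Np_\gx$ the projection is a \emph{right} unit, i.e.\ $x\in Np_\gx$ means $xp_\gx=x$, not $p_\gx x=x$. Hence the step ``$\gx\subset N\gx=Np_\gx$ and $\gx\subset Ne$ forces $\gx\subset p_\gx Ne$'' is a non sequitur: the projection you extract sits on the wrong side (since $N\gx\subset Ne$ it is dominated by $e$ and records the right support of $\gx$, not which ``rows'' of $Ne$ the submodule occupies). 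The projection actually needed is $p_\gx=\bigvee_{x\in\gx}l_N(x)$, and the crux — which you explicitly defer (``$p_\gx Ne$ does not overshoot $\gx$'') — is the inclusion $p_\gx Ne\subset\gx$. The standard route is: for $x\in\gx$ one has $x^*x\in eNe=Ze$, so $x(x^*x+\eps e)^{-1/2}\in Z\gx\subset\gx$ converges in the $s$-topology, as $\eps\downarrow 0$, to the partial isometry $v$ of the polar decomposition of $x$; thus $v\in\gx$ by $s$-closedness and $l_N(x)=vv^*$. Then for any $y\in Ne$ one has $v^*y\in eNe=Ze$, say $v^*y=ze$, whence $l_N(x)y=vv^*y=zv\in\gx$; a maximal-orthogonal-family argument over $x\in\gx$ gives $p_\gx Ne\subset\overline{\gx}^{\,s}=\gx$. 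None of this can be reached through the left ideal $N\gx$, so as written your plan for (i) would fail. Your uniqueness argument for $p_\gx$ is correct.
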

\begin{proof}
(i) is \cite{SZ}, Lemma 1.6.

For (ii), we start by noticing that we have
\begin{equation}
\label{centr.supp}
z_{pNp}(f)=p\Longleftrightarrow z_N(f)\geq p\Longleftrightarrow
z_N(f)=z_N(p)\;\! .
\end{equation}
The last equivalence is obvious.
Next, if $z_{pNp}(f)=p\;\!$, then
\smallskip

\centerline{$\underbrace{f\big( \idd_N -z_N(f)\big)}_{=\, 0}p
=0$ and $\big( \idd_N -z_N(f)\big) p\in Z(N)p=Z(pNp)$}
\smallskip

\noindent imply that $\big( \idd_N -z_N(f)\big) p=0\;\!$, hence $p=z_N(f)p\leq
z_N(f)\;\!$. Finally, if we assume that $z_N(f)\geq p$ and choose some
projection $q\in Z(N)$ such that $z_{pNp}(f)\in Z(pNp)=Z(N)p$ is equal to $q\;\! p\;\!$,
then
\medskip

\centerline{$Z(N)\ni q\geq q\;\! p=z_{pNp}(f)\geq f$}
\smallskip

\noindent implies $q\geq z_N(f)\geq p\;\!$, so $z_{pNp}(f)=q\;\! p=p\;\!$.
\smallskip

Let now $f_1$ be an abelian projection in $N\big(\idd_N-z_{N}(f)\big)$ of central
support $z_{N(\idd_N-z_{N}(f))}(f_1)=\idd_N-z_{N}(f)$, and set
$e:=f+f_1\;\!$.
Since $f$ and $f_1$ are abelian projections in $N$ and
have orthogonal central supports of
sum $\idd_N$ in $N$, their sum $e$ is an abelian projection in $N$ of central
support $z_N(e)=\idd_N\;\!$.

By (\ref{centr.supp}) we have $\idd_N-z_{N}(f)\leq \idd_N-p\;\!$, so
$\big(\idd_N-z_{N}(f)\big) p=0$ and, consequently,
\smallskip

\centerline{$ep=(f+f_1)p=f+f_1\big(\idd_N-z_{N}(f)\big) p=f\, ,$}
\centerline{$pe=(ep)^*=f^*=f\, .$}
\smallskip

\noindent Therefore $ep=f=pe\;\!$. 

On the other hand, again by \eqref{centr.supp},
$z_N(f)=z_N(p)$ and it is clearly orthogonal to
\smallskip

\centerline{$z_{N}(e-f)=z_{N}(f_1)=\idd_N-z_{N}(f)\;\! .$}
\smallskip

Finally, let $x\in N$ be arbitrary.
Since $p\leq z_N(p)=z_N(f)\in Z(N)\;\!$, we have
\smallskip

\centerline{$pxe=pz_N(f)xe=pxz_N(f)e\;\! .$}
\smallskip

\noindent Taking into account that $z_{N}(f)z_{N}(e-f)=0$, we have also
$$
z_{N}(f)e=z_{N}(f)(f+(e-f))=f+z_{N}(f)z_{N}(e-f)=f\,,
$$
hence
\smallskip

\centerline{$pxe=pxz_N(f)e=pxf=(pxp)f\;\! .$}
\end{proof}

The next theorem, which reproduces some automatic continuity and adjointability
results from \cite{SZ}, is the heart of the Hilbert module approach of the
reduction theory of $W^*$-algebras.


To start with, let us consider, for $j=1,2$, type $\ty{I}$ $W^*$-algebras $N_j$ with centres $Z_j$,
and $e_j\in N_j$ abelian projections with central supports $z_{N_j}(e_j)=\idd_{N_j}$.
We also assume the existence of an abelian $W^*$-algebra $Z$ such that there are
$*$-isomorphisms 
$$
\pi_j:Z\to\pi_j(Z)=Z_j\subset N_j\,,\quad j=1,2\,.
$$
Then, $\pi_j(Z)$, $N_j$ and $N_je_j$, $j=1,2$, become all (left) $Z$-modules
in a natural way. 
Moreover, $N_je_j$ becomes a self-dual Hilbert $Z$-module under the $Z$-valued inner product defined by
$$
\langle x\!\mid\!y\rangle_{\substack{ {} \\ N_je_j}}:=\pi_j^{-1}\big((x\!\mid\!y)_{\substack{ {} \\ N_je_j}}\big)
=\big(\pi_j^{-1}\circ\F_{e_j}\big)(y^*x)\,,\quad x,y\in N_je_j\,.
$$

\begin{thm}
\label{mmainn}
For $j=1,2$, let $N_j$, $e_j$, $Z$, $\pi_j$ as above. 
\begin{itemize} 
\item[(i)] If $\gx\subset N_1e_1$ is a $Z$-submodule and $T:\gx\longrightarrow N_2$ is a
bounded $Z$-linear ($Z$-antilinear) map, then $T$ extends to a unique $s$-continuous
$Z$-linear ($Z$-antilinear) map
$\widetilde{T}:\overline{\gx}^s\longrightarrow N_2$. In addition, $\|\widetilde{T}\|=\|T\|$.
\item[(ii)] If $\gx_j\subset N_je_j$, $j=1,2$, are  $s$-closed $Z$-submodules and $T:\gx_1\longrightarrow \gx_2$ is a
bounded $Z$-linear $(Z$-antilinear$)$ map, then $T$ is adjointable.
\end{itemize}
\end{thm}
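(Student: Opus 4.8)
For (i) the plan is to upgrade the scalar bound $\|Tx\|\le\|T\|\,\|x\|$ to the operator inequality
$$
T(x)^{*}T(x)\le\|T\|^{2}\,\pi_{2}\!\left(\langle x\!\mid\!x\rangle_{\substack{ {} \\ N_{1}e_{1}}}\right),\qquad x\in\gx,
$$
and then to read off both the $s$-continuity and the extension from it. For the upgrade: since $\gx$ is a $Z$-submodule, $\pi_{1}(z)x\in\gx$ for each projection $z\in Z$, so $Z$-linearity gives $T(\pi_{1}(z)x)=\pi_{2}(z)T(x)$. As $\pi_{2}(z)$ is central in $N_{2}$ we get $\|\pi_{2}(z)T(x)\|^{2}=\|\pi_{2}(z)\,T(x)^{*}T(x)\|$; on the other hand, using $x=xe_{1}$, $x^{*}x=\F_{e_{1}}(x^{*}x)e_{1}$ and that $z\mapsto\pi_{1}(z)$, $w\mapsto we_{1}$ and $\pi_{2}\circ\pi_{1}^{-1}$ are $*$-isomorphisms onto their ranges (recall $z_{N_{1}}(e_{1})=\idd_{N_{1}}$), one computes $\|\pi_{1}(z)x\|^{2}=\|\pi_{2}(z)\,\pi_{2}(\langle x\!\mid\!x\rangle_{\substack{ {} \\ N_{1}e_{1}}})\|$. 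Writing $A:=T(x)^{*}T(x)\in N_{2}^{+}$ and $B:=\|T\|^{2}\,\pi_{2}(\langle x\!\mid\!x\rangle_{\substack{ {} \\ N_{1}e_{1}}})\in Z(N_{2})^{+}$, the bound on $T$ thus says precisely $\|wA\|\le\|wB\|$ for every central projection $w\in Z(N_{2})$ (each such $w$ being $\pi_{2}(z)$ for a projection $z\in Z$).

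The only step with genuine content — the hard part — is the passage from ``$\|wA\|\le\|wB\|$ for all central $w$'' to $A\le B$, the point being that $B$ is central while $A$ in general is not. I would argue by contradiction: if $A\not\le B$, pick $\delta>0$ with $r:=\chi_{(\delta,+\infty)}(A-B)\neq0$; cover $[0,\|B\|]$ by finitely many pairwise disjoint half-open intervals $I_{k}$ of length $\delta/3$, so that $w_{k}:=\chi_{I_{k}}(B)$ are central, pairwise orthogonal and sum to $\idd_{N_{2}}$; since $r=\sum_{k}rw_{k}$, some $r':=rw_{k}$ is non-zero. With $t_{k}$ the left endpoint of $I_{k}$ one has $r'\le w_{k}$, $r'(A-B)r'\ge\delta r'$ and $Br'\ge t_{k}r'$, hence $r'Ar'\ge(\delta+t_{k})r'$, so $\|w_{k}A\|\ge\|r'Ar'\|\ge\delta+t_{k}$ (using $\|r'Ar'\|=\|A^{1/2}r'A^{1/2}\|\le\|A^{1/2}w_{k}A^{1/2}\|=\|w_{k}A\|$); but $\|w_{k}B\|\le t_{k}+\delta/3$, contradicting $\|w_{k}A\|\le\|w_{k}B\|$.

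Granting the displayed inequality, $s$-continuity of $T$ is immediate: if $x_{\iota}\to x$ $s$-ly in $N_{1}e_{1}$, then $|x_{\iota}-x|_{\substack{ {} \\ N_{1}e_{1}}}\to0$ in the $s$-topology of $Z(N_{1})$, hence $\langle x_{\iota}-x\!\mid\!x_{\iota}-x\rangle_{\substack{ {} \\ N_{1}e_{1}}}\to0$ weakly in $Z$, so for every normal state $\psi$ on $N_{2}$ the inequality gives $\psi\!\left(T(x_{\iota}-x)^{*}T(x_{\iota}-x)\right)\le\|T\|^{2}(\psi\circ\pi_{2})\!\left(\langle x_{\iota}-x\!\mid\!x_{\iota}-x\rangle_{\substack{ {} \\ N_{1}e_{1}}}\right)\to0$, since $\psi\circ\pi_{2}$ is a normal positive form on $Z$. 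For the extension: given $\zeta\in\overline{\gx}^{\,s}$, take (by a Kaplansky-type density argument) a bounded net $x_{\iota}\in\gx$ with $x_{\iota}\to\zeta$ $s$-ly; the Minkowski inequality and the characterisation of $s$-convergence give $|x_{\iota}-x_{\kappa}|_{\substack{ {} \\ N_{1}e_{1}}}\to0$, so by the displayed inequality $(T(x_{\iota}))$ is a bounded $s$-Cauchy net in $N_{2}$, hence $s$-convergent, bounded parts of $N_{2}$ being $s$-complete; set $\widetilde{T}(\zeta):=s\text{-}\lim_{\iota}T(x_{\iota})$. One checks this is net-independent, $Z$-linear, $s$-continuous and extends $T$, that the displayed inequality survives on $\overline{\gx}^{\,s}$ by $w$-lower semicontinuity (whence $\|\widetilde{T}\|\le\|T\|$), and that the reverse norm inequality and uniqueness follow from $s$-density. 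The $Z$-antilinear case is identical.

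For (ii) the idea is to reduce to self-duality. By Theorem \ref{nopr1}(i), applied inside $N_{1}$ and $N_{2}$ (an $s$-closed $Z$-submodule of $N_{j}e_{j}$ being the same as an $s$-closed $Z(N_{j})$-submodule), $\gx_{j}=p_{j}N_{j}e_{j}$ for a projection $p_{j}\in N_{j}$; hence $\gx_{j}$ is orthogonally complemented in $N_{j}e_{j}$, with complement $(\idd_{N_{j}}-p_{j})N_{j}e_{j}$. Since $N_{j}e_{j}$ is self-dual and an orthogonal direct summand of a self-dual Hilbert module is again self-dual (extend a functional by zero, represent it on the whole module, and compress back along the orthogonal projection), each $\gx_{j}$ is a self-dual Hilbert $Z$-module. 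Now, for a bounded $Z$-linear ($Z$-antilinear) $T:\gx_{1}\to\gx_{2}$ and fixed $y\in\gx_{2}$, the map $\gx_{1}\ni x\mapsto\langle Tx\!\mid\!y\rangle_{\substack{ {} \\ N_{2}e_{2}}}\in Z$ is bounded (Schwarz inequality) and $Z$-linear ($Z$-antilinear), so by self-duality of $\gx_{1}$ it equals $x\mapsto\langle x\!\mid\!T^{*}y\rangle_{\substack{ {} \\ N_{1}e_{1}}}$ (resp. $x\mapsto\langle T^{*}y\!\mid\!x\rangle_{\substack{ {} \\ N_{1}e_{1}}}$) for a unique $T^{*}y\in\gx_{1}$; the resulting $T^{*}:\gx_{2}\to\gx_{1}$ is the required adjoint, automatically bounded and $Z$-(anti)linear by the remarks of Section \ref{sec3}. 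So the whole weight of the theorem lies in the fibrewise upgrade of part (i); part (ii) is then essentially formal.
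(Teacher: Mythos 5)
Your argument is correct, and it is essentially a self-contained reconstruction of the facts the paper only cites: the published proof of (i) is the single line ``is a consequence of [SZ], Lemma 1.3'', and (ii) is reduced to [SZ], Lemma 1.12 and Remarks 1.13 by extending $T$ to $T_0:N_1e_1\to N_2e_2$, $T_0x:=T(p_1x)$, and compressing the adjoint by $p_1$. Your fibrewise inequality $T(x)^*T(x)\le\|T\|^2\,\pi_2\big(\langle x\!\mid\!x\rangle\big)$ is exactly the content hidden behind the citation for (i) (Paschke-type inequality for module maps), and your spectral-partition proof that ``$\|wA\|\le\|wB\|$ for all central projections $w$'' forces $A\le B$ when $B$ is central is a clean, correct way to obtain it. For (ii) you establish self-duality of the complemented submodule $\gx_1=p_1N_1e_1$ and represent the functional $x\mapsto\langle Tx\!\mid\!y\rangle$ directly, whereas the paper first passes to all of $N_1e_1$ and compresses afterwards; the two routes are interchangeable, the common engine being self-duality of $N_je_j$ (which the paper asserts in the setup, via [SZ], Lemma 1.11). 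So the approaches agree in substance; yours simply makes the mechanism explicit.

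The one step you assert without proof is the existence, for $\zeta\in\overline{\gx}^{\,s}$, of a \emph{bounded} net in $\gx$ converging to $\zeta$ in the $s$-topology. This is not literally Kaplansky density ($\gx$ is a $Z$-submodule, not a $*$-subalgebra), and it is genuinely needed, since only bounded parts of $N_2$ are $s$-complete. It is true and can be supplied with your own tools: given any net $x_\iota\to\zeta$ in $\gx$, put $c_\iota:=\langle x_\iota\!\mid\!x_\iota\rangle^{1/2}$, $d:=\langle\zeta\!\mid\!\zeta\rangle^{1/2}$ (so $\|d\|=\|\zeta\|$) and $g_\iota:=\chi_{[0,\|\zeta\|+1]}(c_\iota)\in Z$. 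Then $\pi_1(g_\iota)x_\iota\in\gx$ has norm at most $\|\zeta\|+1$; the reverse Minkowski inequality in the abelian algebra $Z$ gives $|c_\iota-d|\le|x_\iota-\zeta|$, hence $\idd_Z-g_\iota\le(c_\iota-d)^2\le\langle x_\iota-\zeta\!\mid\!x_\iota-\zeta\rangle\to0$ in the $s$-topology, and therefore $\pi_1(g_\iota)x_\iota=\pi_1(g_\iota)(x_\iota-\zeta)+\zeta-\pi_1(\idd_Z-g_\iota)\zeta\to\zeta$ $s$-ly. With this inserted, your proof is complete.
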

\begin{proof}
(i) is a consequence of \cite{SZ}, Lemma 1.3.

(ii) is essentially  \cite{SZ}, 1.13 (1). We sketch its proof for the convenience of the reader.

By Theorem \ref{nopr1}, (i), there exists projection $p_1:=p_\gx\in N_1$ such that $\gx_1=p_1N_1e_1$. Let us consider the
bounded $Z$-linear ($Z$-antilinear) map $T_0:N_1e_1\to N_2e_2$ defined by
$$
T_0x:=T(p_1x)\in\gx_2\subset N_2e_2\,,\quad x\in N_1e_1\,.
$$

According to  \cite{SZ}, Lemma 1.12 and the Remarks 1.13 (2), 1.13 (3), $T_0$ is adjointable. Therefore, with
$$
T^*y:=p_1T_0^*y\in p_1N_1e_1=\gx_1\,,\quad y\in \gx_2\,,
$$
we check that $T^*:\gx_2\to\gx_1$ is the adjoint of $T$. Indeed,
$T^*$ is a $Z$-linear ($Z$-antilinear) map satisfying
\begin{align*}
\langle Tx\!\mid\!y\rangle_{\substack{ {} \\ \gx_2}}
=&\langle Tx\!\mid\!y\rangle_{\substack{ {} \\ N_2e_2}}
=\langle T(p_1x)\!\mid\!y\rangle_{\substack{ {} \\ N_2e_2}}
=\langle T_0x\!\mid\!y\rangle_{\substack{ {} \\ N_2e_2}}
=\langle x\!\mid\!T_0^*y\rangle_{\substack{ {} \\ N_1e_1}}\\
=&\big(\pi_1^{-1}\circ\F_{e_1}\big)\big((T_0^*y)^*x\big)
=\big(\pi_1^{-1}\circ\F_{e_1}\big)\big((T_0^*y)^*p_1x\big)\\
=&\big(\pi_1^{-1}\circ\F_{e_1}\big)\big((p_1T_0^*y)^*x\big)
=\langle x\!\mid\! p_1T_0^*y\rangle_{\substack{ {} \\ N_1e_1}}\\
=&\langle x\!\mid\! T^*y\rangle_{\substack{ {} \\ N_1e_1}}
=\langle x\!\mid\! T^*y\rangle_{\substack{ {} \\ \gx_1}}
\end{align*}
$$
\big((Tx\!\mid\!y)_{\substack{ {} \\ \gx_2}}=(T^*y\!\mid\!x)_{\gx_1}\;\text{in the
antilinear situation}\big)\,,
$$
for every $x\in\gx_1$ and $y\in\gx_2$.

\end{proof}

We notice that, with $N_j$, $e_j$, $Z$, $\pi_j$, $j=1,2$, as in Theorem \ref{mmainn}, (ii) of this theorem entails that, for every s-closed $Z$-submodules $\gx_j\subset N_je_j$, we have
$\cb_Z(\gx_1,\gx_2)=\ca_Z(\gx_1,\gx_2)$.

In particular, for $N$ a type $\ty{I}$ $W^*$-algebra, $e\in N$ an abelian projection with central support $\idd_N$, and $\gx\subset Ne$ a s-closed $Z:=Z(N)$-submodule,
$\cb_Z(\gx)=\ca_Z(\gx)$ is a unital $C^*$-algebra.

In the situation described above, every $a\in N$ induces, by left multiplication, a bounded $Z$-linear map $L^{Ne}(a):Ne\to Ne$ :
$$
L^{Ne}(a)x:=ax\,,\quad x\in Ne\;\! .
$$
Clearly, 
$$
L^{Ne}:N\ni a\mapsto L^{Ne}(a)\in\cb_Z(Ne)
$$
is a $*$-homomorphism. Moreover, as a direct consequence of the above theorem, we have

\begin{thm}
\label{isotr} 
Let $N$ be a type $\ty{I}$ $W^*$-algebra with centre $Z$, and $e\in N$ an abelian projection with
central support $z_N(e)=\idd_N$. Then
$$L^{Ne}:N\ni a\mapsto L^{Ne}(a)\in\cb_Z(Ne)$$
is a $*$-isomorphism. In particular, $\cb_Z(Ne)$ is a type $\ty{I}$ $W^*$-algebra and
\begin{equation}
\label{op.norm}
\| a\| =\| L^{Ne}(a)\| =\sup \{ \| ax\|\;\! ; x\in Ne\;\! , \| x\|\leq 1\}\;\! ,\quad a\in N\;\! .
\end{equation}
\end{thm}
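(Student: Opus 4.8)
The plan is to prove that $L^{Ne}$ is an injective $*$-homomorphism \emph{onto} $\cb_Z(Ne)$; once that is established the rest is automatic. Indeed, $\cb_Z(Ne)$, being then $*$-isomorphic to the type $\ty{I}$ $W^*$-algebra $N$, is a type $\ty{I}$ $W^*$-algebra; and since an injective $*$-homomorphism between $C^*$-algebras is isometric, \eqref{op.norm} follows by writing out the operator norm of $L^{Ne}(a)$ on the Hilbert module $Ne$ and using \eqref{op-norm} (which identifies $\|\cdot\|_{Ne}$ with the norm of $N$). Throughout I would use freely that $\cb_Z(Ne)=\ca_Z(Ne)$ is a unital $C^*$-algebra (the discussion following Theorem \ref{mmainn}, specialised to $N_1=N_2=N$, $e_1=e_2=e$, $\pi_1=\pi_2=\id_Z$) and that $Ne=\{x\in N:x(\idd_N-e)=0\}$ is $s$-closed.

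Injectivity is easy: if $L^{Ne}(a)=0$ then $a(ye)=0$ for every $y\in N$, i.e. $aNe=0$, whence $l_N(a)Ne=0$ and therefore the central supports of $l_N(a)$ and $e$ are orthogonal; since $z_N(e)=\idd_N$ this gives $l_N(a)=0$, i.e. $a=0$. Together with the $C^*$-isomorphism remark above, this already yields the norm identity \eqref{op.norm}.

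The substantive point is surjectivity. First I would note that for $x,y\in Ne$ the elementary operator $Ne\ni z\mapsto x\,(z\mid y)_{Ne}$ equals $L^{Ne}(xy^*)$; this is a short computation from $\F_e(w)e=ewe$ together with $ey^*=y^*$ and $ze=z$ (valid for $y,z\in Ne$), so that $L^{Ne}(N)$ contains all such ``finite rank'' operators. Then I would fix a maximal family $(v_i)_{i\in I}$ of partial isometries of $N$ with $v_i^*v_i\le e$ and with the final projections $v_iv_i^*$ mutually orthogonal: maximality, the comparison theorem, and the hypothesis $z_N(e)=\idd_N$ force $\sum_i v_iv_i^*=\idd_N$, and, $e$ being abelian, each subprojection $v_i^*v_i$ of $e$ has the form $p_ie$ with $p_i\in Z$ a projection. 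One checks that $v_i\in Ne$ and that $v_i^*x=(x\mid v_i)_{Ne}\,e$ for $x\in Ne$, so that the reconstruction formula $x=\sum_i (x\mid v_i)_{Ne}\,v_i$ holds $s$-convergently (it is just $x=\bigl(\sum_i v_iv_i^*\bigr)x$ read through these identities, the net $\sum_{i\in F}v_iv_i^*$ increasing to $\idd_N$). Given $T\in\cb_Z(Ne)=\ca_Z(Ne)$, I would put
\[
a:=\sum_{i\in I}T(v_i)v_i^*\,.
\]
The partial sums $a_F=\sum_{i\in F}T(v_i)v_i^*$ satisfy $L^{Ne}(a_F)=T\cdot L^{Ne}\!\bigl(\sum_{i\in F}v_iv_i^*\bigr)$, hence $\|a_F\|\le\|T\|$; and $(a_{F'}-a_F)^*(a_{F'}-a_F)\le\|T\|^2\sum_{i\in F'\setminus F}v_iv_i^*$ shows $(a_F)$ is $s$-Cauchy, so it converges in $s$ to some $a\in N$ with $\|a\|\le\|T\|$. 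Finally, for $x\in Ne$, $Z$-linearity of $T$ gives $a_Fx=T\bigl(\sum_{i\in F}v_iv_i^*x\bigr)$, which $s$-converges to $Tx$ since $\sum_{i\in F}v_iv_i^*x\to x$ in $s$ and $T$ is automatically $s$-continuous (Theorem \ref{mmainn}(i), $Ne$ being $s$-closed), while on the other hand $a_Fx\to ax$ in $s$. Hence $L^{Ne}(a)=T$.

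The main obstacle is this surjectivity step, and within it the construction and exhaustiveness of the frame $(v_i)$ — precisely where the two hypotheses on $e$ are genuinely used: abelianness keeps the $v_i^*v_i$, and thus the whole inner-product bookkeeping, inside the centre $Z$, while $z_N(e)=\idd_N$ lets the comparison theorem force $\sum_i v_iv_i^*=\idd_N$. Everything after that is formal, the one essential ingredient being the automatic $s$-continuity of bounded $Z$-linear maps furnished by Theorem \ref{mmainn}(i); a reader preferring to avoid the $s$-Cauchy estimate could instead extract a $w$-cluster point of the bounded net $(a_F)$ and identify it by the same computation, uniqueness being ensured by the injectivity of $L^{Ne}$.
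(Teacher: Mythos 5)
Your proposal is correct, and while the injectivity/isometry part coincides with the paper's (the paper kills $s(a^*a)$ against the conjugates $ueu^*$ of $e$; you kill $l_N(a)$ against $Ne$ — same central-support argument), your surjectivity argument takes a genuinely different route. The paper first uses that $\ca_Z(Ne)$ is a unital $C^*$-algebra to reduce to a \emph{unitary} $T$, then invokes $s$-continuity to reduce to \emph{homogeneous} $N$, so that it can choose partial isometries with $v_\iota^*v_\iota=e$ exactly; unitarity of $T$ then gives $(Tv_{\iota_2})^*(Tv_{\iota_1})=\delta_{\iota_1\iota_2}e$, whence $a=\sum_\iota (Tv_\iota)v_\iota^*$ converges because $a^*a=\idd_N$ (so $a$ is even an isometry). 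You skip both reductions: your frame only satisfies $v_i^*v_i=p_ie\le e$ (available in any type $\ty{I}$ algebra by Zorn plus comparison, with abelianness of $e$ keeping the initial projections in $Z e$), you treat an arbitrary bounded $T$, and you replace the orthonormality computation by the bound $\|a_F\|=\|L^{Ne}(a_F)\|=\|T\circ L^{Ne}(q_F)\|\le\|T\|$ — legitimately available since isometry of $L^{Ne}$ follows from injectivity alone — together with the $s$-Cauchy estimate $(a_{F'}-a_F)^*(a_{F'}-a_F)\le\|T\|^2\sum_{i\in F'\setminus F}v_iv_i^*$. What the paper's route buys is a very short final computation once the reductions are granted (though the reduction to the homogeneous case is stated rather tersely there); what yours buys is a self-contained argument valid verbatim for general $N$ and general $T$, at the cost of the extra norm/Cauchy bookkeeping. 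Both proofs lean on the same two external inputs: automatic $s$-continuity of bounded $Z$-linear maps (Theorem \ref{mmainn}(i)) and the identification $\cb_Z(Ne)=\ca_Z(Ne)$. One cosmetic remark: in the injectivity step it is marginally cleaner to pass to the right support $r_N(a)=s(a^*a)$ (since $aNe=0$ is directly equivalent to $s(a^*a)Ne=0$), although your statement $l_N(a)Ne=0$ is also true, via $aNe=0\Rightarrow (aa^*)^nNe=0$ for all $n$.
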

\begin{proof}
The injectivity of $L^{Ne}$ is straightforward. Indeed, if $L^{Ne}(a)=0$ then we have for every $u\in U(N)$,
$$
a^*a(ueu^*)=a^*\big(L^{Ne}(a)(ue)\big)u^*=0\, ,
$$
so the support $s(a^*a)$ is orthogonal to $ueu^*$. Consequently, $s(a^*a)$ is orthogonal to the central support
$$
\idd_N=z_N(e)=\bigvee_{u\in U(N)}ueu^*\, ,
$$
hence $a=0$.

Since $\cb_Z(Ne)=\ca_Z(Ne)$ is a unital $C^*$-algebra, for the surjectivity of $L^{Ne}$ it is enough to
prove that for each unitary $T\in \ca_Z(Ne)$ there exists an isometrical $a\in N$ such that
$$
Tx=ax\, ,\quad x\in Ne\, .
$$
By the $s$-continuity of $T$, we can reduce the matter to the case when $N$ is a homogeneous type
$I$ $W^*$-algebra.

Let $(e_\iota)_\iota$ be a family of pairwise orthogonal, equivalent, abelian projections of central support
$\idd_N$ in $N$ such that $\sum_\iota e_\iota=\idd_N$. Since each $e_\iota$ is equivalent to $e$, there
are partial isometries $(v_\iota)_\iota\subset N$ satisfying
$$
v^*_\iota v_\iota=e\,,\quad v_\iota v^*_\iota=e_\iota
$$
for all $\iota$.

Since $T$ is assumed unitary, by \eqref{uni} we obtain for every two indices $\iota_1$ and $\iota_2$,
\begin{align*}
(Tv_{\iota_1}\!\mid\!Tv_{\iota_2})_{\substack{ {} \\ Ne}}=&(v_{\iota_1}\!\mid\!T^*Tv_{\iota_2})_{\substack{ {} \\ Ne}}=(v_{\iota_1}\!\mid\!v_{\iota_2})_{\substack{ {} \\ Ne}}\\
=&\F_e(v_{\iota_2}^*v_{\iota_1})=\d_{\iota_1\iota_2}\idd_N\,.
\end{align*}
In other words, we have
$$
(Tv_{\iota_2})^*Tv_{\iota_1}=\d_{\iota_1\iota_2}e
$$
for all $\iota_1$ and $\iota_2$. Consequently, we can define
\begin{equation}
\label{iotaz}
a:=\sum_\iota(Tv_\iota)v_\iota^*\in N\,
\end{equation}
where the sum is $s$-convergent. Moreover,
\begin{align*}
a^*a=&\sum_{\iota_1,\iota_2}v_{\iota_2}(Tv_{\iota_2})^*(Tv_{\iota_1})v_{\iota_1}^*
=\sum_{\iota_1,\iota_2}v_{\iota_2}(\d_{\iota_1\iota_2}e)v_{\iota_1}^*\\
=&\sum_\iota v_\iota v_\iota^*=\sum_\iota e_\iota=\idd_N\,,
\end{align*}
that is $a$ is isometrical.

Now we verify that
$$
Tx=ax\,,\quad x\in Ne\,.
$$
Indeed, as
\begin{equation}
\label{iotazz}
x=\sum_\iota e_\iota x=\sum_\iota v_\iota v_\iota^*x=\sum_\iota(x\!\mid\!v_\iota)_{\substack{ {} \\ Ne}}v_\iota\,,
\end{equation}
by the $s$-continuity of $T$ we obtain
$$
Tx=\sum_\iota T\big((x\!\mid\!v_\iota)_{\substack{ {} \\ Ne}}v_\iota\big)=\sum_\iota (x\!\mid\!v_\iota)_{\substack{ {} \\ Ne}}Tv_\iota\,.
$$
On the other hand, by \eqref{iotaz} and \eqref{iotazz},
\begin{align*}
ax=&\sum_{\iota_1,\iota_2}(Tv_{\iota_1})v_{\iota_1}^*(x\!\mid\!v_{\iota_2})_{\substack{ {} \\ Ne}}v_{\iota_2}\\
=&\sum_{\iota_1,\iota_2}(x\!\mid\!v_{\iota_2})_{\substack{ {} \\ Ne}}(Tv_{\iota_1})(\d_{\iota_1\iota_2}e)\\
=&\sum_\iota(x\!\mid\!v_\iota)_{\substack{ {} \\ Ne}}Tv_\iota\,,
\end{align*}
that is $Tx=ax$.

\end{proof}

Theorem \ref{isotr} can be easily generalised to the case of $\cb_Z(\gx)$, where $N$ is a type $\ty{I}$ $W^*$-algebra with centre $Z$, $e\in N$ an abelian projection with central support $\idd_N$, and 
$\gx\subset Ne$ a s-closed $Z$-submodule. We recall that,
by Theorem \ref{nopr1}, (i), $\gx=p_\gx Ne$ for a unique projection $p_\gx \in N$. For every $a\in p_\gx Np_\gx$, we can define a bounded $Z$-linear map
$L^\gx(a):\gx\to\gx$ by setting
$$
L^\gx(a)x:=ax\,,\quad x\in\gx\,.
$$
\begin{cor}
\label{coun}
For $N$, $Z$, $e$, $\gx$ as above,
$$
L^\gx:p_\gx Np_\gx\ni a\mapsto L^\gx(a)\in\cb_Z(\gx)
$$
is a $*$-isomorphism. In particular, $\cb_Z(\gx)$ is a type $\ty{I}$ $W^*$-algebra.
\end{cor}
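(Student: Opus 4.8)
The plan is to reduce matters, via Theorem \ref{nopr1}, (i), to a corner of $N$ and then to mimic the proof of Theorem \ref{isotr}. Write $p:=p_\gx\in N$, so that $\gx=pNe$. First I would dispose of the routine facts: for $a\in pNp$ and $x\in\gx$ one has $ax\in pNp\cdot pNe\subset pNe=\gx$ with $\|ax\|\leq\|a\|\,\|x\|$, the left multiplication $L^\gx(a)$ commutes with the $Z$-action, and — since the $Z$-valued inner product of $\gx$ is the restriction of the one of $Ne$ — $\big(ax\!\mid\!y\big)_\gx=\F_e(y^*ax)=\big(x\!\mid\!a^*y\big)_\gx$ for $x,y\in\gx$. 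Hence $L^\gx$ is a well-defined, unital $*$-homomorphism of $pNp$ into $\cb_Z(\gx)$, which by the Remark following Theorem \ref{mmainn} coincides with the $C^*$-algebra $\ca_Z(\gx)$.

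For the injectivity of $L^\gx$ I would simply repeat the argument of Theorem \ref{isotr}: if $L^\gx(a)=0$ with $a=pap$, then, testing on $x:=pue\in\gx$ for $u\in U(N)$ and using $ap=a$, one gets $aue=0$, hence $a\,ueu^*=0$, i.e. $ueu^*\leq\idd_N-r_N(a)$ for every $u\in U(N)$; taking the supremum over $u$ and recalling $z_N(e)=\bigvee_{u\in U(N)}ueu^*=\idd_N$ forces $r_N(a)=0$, so $a=0$.

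The substantive point is surjectivity, where the idea is to extend an endomorphism of $\gx$ to an adjointable endomorphism of the whole module $Ne$ — to which Theorem \ref{isotr} applies — and then to compress it by $p$. Since $\idd_N-p$ is again a projection in $N$, Theorem \ref{nopr1}, (i) yields the orthogonal decomposition $Ne=pNe\oplus(\idd_N-p)Ne$ into $s$-closed $Z$-submodules, so the module projection $P\colon Ne\to\gx$, $P\xi:=p\xi$, is adjointable, its adjoint being the inclusion $\gx\hookrightarrow Ne$. Given $T\in\ca_Z(\gx)$, the map $\widetilde T\colon Ne\to Ne$, $\widetilde T\xi:=T(p\xi)$, is the composition of $P$, of $T$ (adjointable, as $\ca_Z(\gx)=\cb_Z(\gx)$), and of the inclusion $\gx\hookrightarrow Ne$; it is therefore a bounded, adjointable, $Z$-linear map, so by Theorem \ref{isotr} there is $\hat a\in N$ with $\hat a\xi=T(p\xi)$ for all $\xi\in Ne$. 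Setting $a:=p\,\hat a\,p\in pNp$ and using that $p$ acts as the identity on $\gx$ while $T$ maps $\gx$ into $\gx$, one obtains, for $x\in\gx$, $ax=p\hat a p\,x=p\hat a\,x=p\,T(x)=Tx$, i.e. $L^\gx(a)=T$.

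Combining the three steps, $L^\gx$ is a $*$-isomorphism of $p_\gx Np_\gx$ onto $\cb_Z(\gx)$; consequently $\cb_Z(\gx)$ is a $W^*$-algebra, and since $p_\gx Np_\gx$ is a corner of the type $\ty{I}$ $W^*$-algebra $N$ it is itself of type $\ty{I}$, whence so is $\cb_Z(\gx)$. I expect the only spot needing some care to be the adjointability of $\widetilde T$ (equivalently, of the module projection $P$), but this is immediate from the orthogonality of the submodules $pNe$ and $(\idd_N-p)Ne$ inside $Ne$; everything else is a direct transcription of the proofs of Theorems \ref{nopr1} and \ref{isotr}.
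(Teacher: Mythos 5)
Your proposal is correct and follows essentially the same route as the paper: extend $T\in\cb_Z(\gx)$ to $Ne$ via $x\mapsto T(p_\gx x)$, invoke the surjectivity of $L^{Ne}$ from Theorem \ref{isotr}, and compress the resulting element of $N$ by $p_\gx$ (the paper proves the element already equals its compression, while you simply define $a:=p_\gx\hat a p_\gx$ and check $L^\gx(a)=T$ directly — an immaterial difference, and your worry about adjointability of $\widetilde T$ is unnecessary since Theorem \ref{isotr} only requires bounded $Z$-linearity). The injectivity argument you give is a direct transcription of the one in Theorem \ref{isotr}, whereas the paper reduces it to the injectivity of $L^{Ne}$ itself; both are fine.
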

\begin{proof}
We start by noticing that $L^\gx$ is a $*$-homomorphism.

For the injectivity of $L^\gx$, let $a\in p_\gx Np_\gx$ be such that $L^\gx(a)=0$. Then we have, for every $x\in Ne$, 
$$
L^{Ne}(a)x=ax=(ap_\gx)x=a(p_\gx x)=L^{\gx}(a)(p_\gx x)=0\,,
$$
and Theorem \ref{isotr} yields $a=0$.

For the surjectivity of $L^\gx$, let $T\in \cb_Z(\gx)$ be arbitrary. Then 
$$
T_o:Ne\ni x\longmapsto T(p_\gx x)\in\gx\subset Ne
$$
is a bounded $Z$-linear map. By Theorem \ref{isotr}, there exists $a\in N$ such that $T_o=L^{Ne}(a)$. For every $x,y\in Ne$, we have
\begin{equation}
\label{stima}
(ax\!\mid\!y)_{\substack{ {} \\ Ne}}=\big(T(p_\gx x)\!\mid\!p_\gx y\big)_{\substack{ {} \\ \gx}}\;\! .
\end{equation}
Indeed,
\begin{align*}
(ax\!\mid\!y)_{\substack{ {} \\ Ne}}=&(L^{Ne}(a)x\!\mid\!y)_{\substack{ {} \\ Ne}}=(T_ox\!\mid\!y)_{\substack{ {} \\ Ne}}
=\big(\underbrace{T(p_\gx x)}_{\in\gx}\!\mid\!y\big)_{\substack{ {} \\ Ne}}\\
=&\big(p_\gx T(p_\gx x)\!\mid\!y\big)_{\substack{ {} \\ Ne}}=\big(T(p_\gx x)\!\mid\!p_\gx y\big)_{\substack{ {} \\ Ne}}
=\big(T(p_\gx x)\!\mid\!p_\gx y\big)_{\substack{ {} \\ \gx}}\,.
\end{align*}
In \eqref{stima}, replacing $x,y$ with $p_\gx x,p_\gx y$, respectively, we obtain
\begin{equation}
\label{stina1}
(ap_\gx x\!\mid\!p_\gx y)_{\substack{ {} \\ Ne}}=\big(T(p_\gx x)\!\mid\!p_\gx y\big)_{\substack{ {} \\ \gx}}\,,
\end{equation}
hence
\begin{align*}
(L^{Ne}(p_\gx ap_\gx)x\!\mid\!y)_{\substack{ {} \\ Ne}}=&(p_\gx ap_\gx x\!\mid\!y)_{\substack{ {} \\ Ne}}=(ap_\gx x\!\mid\!p_\gx y)_{\substack{ {} \\ Ne}}\\
\overset{\text{\eqref{stina1}}}{=}&\big(T(p_\gx x)\!\mid\!p_\gx y\big)_{\substack{ {} \\ \gx}}\overset{\text{\eqref{stima}}}{=}(ax\!\mid\!y)_{\substack{ {} \\ Ne}}\\
&=(L^{Ne}(a)x\!\mid\!y)_{\substack{ {} \\ Ne}}\,.
\end{align*}
We conclude that $L^{Ne}(p_\gx ap_\gx)=L^{Ne}(a)$ and, by the injectivity of $L^{Ne}$, $a=p_\gx ap_\gx\in p_\gx Np_\gx$. Therefore,
$$
Tx=T_ox=L^{Ne}(a)x=ax=L^{\gx}(a)x
$$
for every $x\in\gx$, that is $T=L^{\gx}(a)$.

\end{proof}

For $N$, $Z$, $e$, $\gx$ as before, let us denote the
commutant of $\cs\subset \cb_Z(\gx)$, that is
$$
\cs'=\big\{T\in\cb_Z(\gx)\,;\,\, TS=ST\,,\,S\in\cs\big\}\,.
$$

Since $L^\gx$ is a $*$-isomorphism, we have for any set $\cs\subset p_\gx Np_\gx$,
$$
L^\gx(\cs)'=L^\gx(\cs)'\bigcap L^\gx\big(p_\gx Np_\gx\big)=L^\gx\big(\cs'\bigcap (p_\gx Np_\gx)\big)\,.
$$
Consequently, 
$$
L^\gx(\cs)''=L^\gx\Big(\big(\cs'\bigcap (p_\gx Np_\gx)\big)'\bigcap(p_\gx Np_\gx)\Big)\supset L^\gx(\cs)\,,
$$
and the equality $L^\gx(\cs)''=L^\gx(\cs)$ holds if and only if 
$$
\big(\cs'\bigcap (p_\gx Np_\gx)\big)'\bigcap(p_\gx Np_\gx)=\cs\,.
$$
This happens in the following situation:
\begin{cor}
\label{docomm1}
For $N$, $Z$, $e$, as before, $M\subset N$ a $W^*$-subalgebra of $N$ containing $Z$, and $p$ a projection either in $M$ or in $M'\bigcap N$, denoting $\gx:=pNe$, we have
$$
L^\gx(pMp)''=L^\gx(pMp)\,.
$$
\end{cor}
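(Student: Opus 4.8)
By the observation made just before the statement, $L^\gx(pMp)''=L^\gx(pMp)$ will follow once we know that $pMp\subset p_\gx Np_\gx$ and that
\[
\big((pMp)'\cap pNp\big)'\cap pNp=pMp ,
\]
the (relative) commutants being taken in $N$, equivalently in $pNp$. So the first thing I would do is identify the projection $p_\gx$ from Theorem \ref{nopr1}, (i), attached to $\gx=pNe$. The subset $pNe\subset Ne$ is a $Z$-submodule, and it is $s$-closed, since left multiplication by $p$ is $s$-continuous on $Ne$ (a consequence of $p\le\idd_N$ and positivity of $\F_e$) and $Ne$ is itself $s$-closed; hence $pNe=p_\gx Ne$. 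Now $L^{Ne}(p)$ and $L^{Ne}(p_\gx)$ are two projections of the $C^*$-algebra $\cb_Z(Ne)=\ca_Z(Ne)$ with the same range $\gx$; since a projection in $\ca_Z(Ne)$ acts as the identity on its range, one has $L^{Ne}(p)L^{Ne}(p_\gx)=L^{Ne}(p_\gx)$ and $L^{Ne}(p_\gx)L^{Ne}(p)=L^{Ne}(p)$, so taking adjoints in the first identity gives $L^{Ne}(p)=L^{Ne}(p_\gx)$, whence $p=p_\gx$ by the injectivity of $L^{Ne}$ (Theorem \ref{isotr}). In particular $p_\gx Np_\gx=pNp$ and $pMp\subset pNp$.

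Next I would record two structural facts. First, $pNp$ is a type $\ty{I}$ $W^*$-algebra — this is already contained in Corollary \ref{coun} through the $*$-isomorphism $L^\gx:pNp\to\cb_Z(\gx)$, or one may just use that a corner of a type $\ty{I}$ $W^*$-algebra is again type $\ty{I}$ — with centre $Z(pNp)=Zp$. Second, $pMp$ is a $W^*$-subalgebra of $pNp$ containing $Zp$: if $p\in M$ this is clear, since $pMp$ is then a corner of $M$ with unit $p$; if $p\in M'\cap N$, then $pMp=Mp=pM$ is the image of $M$ under the normal $*$-homomorphism $m\mapsto mp$, hence a $W^*$-algebra with unit $p$, contained in $pNp$. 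In both cases, for $z\in Z\subset M$ one has $pzp=zp\in pMp$, so $Zp\subset pMp$, that is $Z(pNp)\subset pMp$.

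It then remains to establish the following general fact, which I regard as the heart of the matter: if $R$ is a type $\ty{I}$ $W^*$-algebra and $Q$ is a $W^*$-subalgebra of $R$ with $Z(R)\subset Q\subset R$, then $Q$ equals its relative double commutant $\big(Q'\cap R\big)'\cap R$ in $R$; applying this to $R=pNp$, $Q=pMp$ finishes the proof. One way to see it is to choose a faithful normal representation $\rho$ of $R$ on a Hilbert space with $\rho(R)'=\rho(Z(R))$: such a representation exists because $R$, being type $\ty{I}$, splits as a direct sum of homogeneous components of the form $Z_\iota\ots\cb(K_\iota)$, each representable on $L^2(Z_\iota)\otimes K_\iota$ with $Z_\iota$ acting as a maximal abelian subalgebra. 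Then $\rho(Z(R))=\rho(R)'\subset\rho(Q)$ forces $\rho(Q)'\subset\rho(Z(R))'=\rho(R)$, so that $\rho(Q)'=\rho(Q'\cap R)$ and, in the same way, $\rho\big((Q'\cap R)'\cap R\big)=\rho(Q'\cap R)'=\rho(Q)''=\rho(Q)$; faithfulness of $\rho$ gives the claim. Alternatively this fact may be quoted directly from \cite{SZ}. The one genuine obstacle is precisely this last step — identifying a subalgebra containing the centre of a type $\ty{I}$ algebra with its relative double commutant, in a form free of separability hypotheses.
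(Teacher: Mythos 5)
Your proof is correct and follows essentially the same route as the paper: after identifying $p_\gx$ with $p$, everything reduces to the fact that a $W^*$-subalgebra of a type $\ty{I}$ $W^*$-algebra containing its centre coincides with its relative double commutant, which the paper simply quotes from Dixmier (Part III, Ch.~7, Ex.~13~b) while you prove it via a spatial representation whose commutant equals the image of the centre. The additional care you take in checking that $p=p_\gx$, that $pNp$ is type $\ty{I}$ with centre $Zp$, and that $pMp$ is a $W^*$-subalgebra of $pNp$ containing $Zp$ (the only place where the hypothesis $p\in M$ or $p\in M'\bigcap N$ is used) merely fills in details the paper leaves implicit.
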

\begin{proof}
According to the above considerations, we have to prove the equality
\begin{equation}
\label{stina2}
\Big((p_\gx Mp_\gx)'\bigcap (p_\gx Np_\gx)\Big)'\bigcap(p_\gx Np_\gx)=p_\gx Mp_\gx\,.
\end{equation}

We recall that if $M_o\subset N_o$ a $W^*$-subalgebra of a type $\ty{I}$ $W^*$-algebra $N_o$ with $Z(N_o)\subset M_o$, the double relative commutant of $M_o$ in $N_o$ is equal to $M_o$ itself
(see e.g. \cite{D1}, Part III, Ch. 7, Ex. 13 b):
\begin{equation}
\label{stina3}
\big(M_o'\bigcap N_o\big)'\bigcap N_o=M_o\,.
\end{equation}

To end the proof, we notice that \eqref{stina3} can be applied with $N_o=p_\gx Np_\gx$ and $M_o=p_\gx Mp_\gx$, obtaining \eqref{stina2}.

\end{proof}

The generalisation of Theorem \ref{isotr} to the case of bounded module maps between different Hilbert modules is more involved. Starting to do this,
let now $Z$ be an abelian $W^*$-algebra and, for $j=1,2$, $N_j$ a type $\ty{I}$ $W^*$-algebra, $e_j\in N_j$ an abelian projection with central support $z_{N_j}(e_j)=\idd_{N_j}\;\!$, and 
$\pi_j:Z\to Z(N_j)$ a $*$-isomorphism.
Let us consider a spatial representation $(\r_j,H_j)$ of $N_j$ such that 
$\r_j(N_j)'=\r_j(Z(N_j))=\r_j(\pi_j(Z))$.

Setting
$$
\pi(z):=\r_1(\pi_1(z))\oplus\r_2(\pi_2(z))\,,\quad z\in Z\,,
$$
we denote
$$
N:=\pi(Z)'\,.
$$
Clearly, $N\subset\cb(H_1\bigoplus H_2)$ is a type $\ty{I}$ von Neumann algebra, and $Z\ni z\mapsto\pi(z)\in Z(N)$ is a $*$-isomorphism of $Z$ onto the centre of $N$.

Since $z_{N_j}(e_j)=\idd_{N_j}$,
$$
\th_j:Z\ni z\mapsto (\r_j\circ\pi_j)(z)\lceil_{\r_j(e_j)H_j}\in\cb(\r_j(e_j)H_j)
$$
is a $*$-isomorphism of $Z$ onto the induced von Neumann algebra

\noindent $\r_j(Z(N_j))_{\r_j(e_j)}\subset\cb(\r_j(e_j)H_j)$ which is a MASA. Consequently

\noindent $\th_2\circ\th_1^{-1}$ is a $*$-isomorphism of the MASA $\r_1(Z(N_1))_{\r_1(e_1)}$
onto the MASA $\r_2(Z(N_2))_{\r_j(e_2)}$.
According to \cite{SZ1}, Lemma 7.2, E.7.15 and Corollary 5.25, it is implemented by a unitary
$U:\r_1(e_1)H_1\to \r_2(e_2)H_2$:
$$
\th_2\circ\th_1^{-1}(T)=UTU^*\,,\quad T\in\r_1(Z(N_1))_{\r_1(e_1)}\,.
$$
Extending $U$ by $0$ on $H_1\ominus\r_1(e_1)H_1$, we obtain a partial isometry $V:H_1\to H_2$ satisfying 
\begin{equation}
\label{def.V}
V^*V=\r_1(e_1)\,,\quad VV^*=\r_2(e_2)\,,
\end{equation}
and we have
\begin{equation}
\label{V-intertw.}
V\r_1(\pi_1(z))=\r_2(\pi_2(z))V\,,\quad z\in Z\,.
\end{equation}

Let us denote, for $j\;\! ,k\in \{ 1\;\! , 2\}\;\!$,
\begin{equation}
\label{cucus}
\begin{split}
&(N_j ,\pi_j ; N_k ,\pi_k)\\
:=&\{ T\in \cb (H_j\;\! ,H_k)\;\! ;  T\r_j(\pi_j(z))
=\r_k(\pi_k(z))T , z\in Z\}\;\! .
\end{split}
\end{equation}
By (\ref{V-intertw.}), we have $V\in (N_1 ,\pi_1 ; N_2 ,\pi_2)\;\!$. On the other hand, by the

\noindent bicommutant theorem of von Neumann,
\begin{equation}
\label{bikkk}
(N_j ,\pi_j ; N_j ,\pi_j)= \r_j(\pi_j(Z))' =\r_j(N_j)\,,\quad j=1,2\,.
\end{equation}

\noindent Clearly, for any $j\;\! ,k\;\! ,l\in \{ 1\;\! , 2\}\;\!$,
\begin{equation}
\label{intertw.}
\begin{split}
(N_k ,\pi_k ; N_l ,\pi_l) (N_j ,\pi_j ; N_k &,\pi_k)\subset(N_j ,\pi_j ; N_l ,\pi_l)\,,\\
(N_j ,\pi_j ; N_k ,\pi_k)^*=&(N_k ,\pi_k ; N_j ,\pi_j)\,.
\end{split}
\end{equation}

Finally, let $e\in\cb(H_1\bigoplus H_2)$ be defined by the matrix-representation
$$
e:=\frac12\begin{pmatrix} \r_1(e_1)&V^* \\ V& \r_2(e_2) \end{pmatrix}\, .
$$
Notice that $e$ is a projection of $\cb (H_1\bigoplus H_2)$.
\smallskip

The handling of $\cb_Z (N_1e_1,N_2e_2)$ will be reduced to work with $\cb_Z(N e)$ by using
the next proposition.

\begin{prop}
\label{wuatt}
With the above notations, the following assertions hold true.
\begin{itemize}
\item[(i)] $T=\begin{pmatrix} 
	 T_{11}&T_{12}\\
	T_{21}&T_{22}\end{pmatrix}\in N\iff T_{jk}\in (N_k ,\pi_k ; N_j ,\pi_j)\;\! ,\; j\;\! ,k=1,2$.
\item[(ii)] $e$ is an abelian projection of $N$ with central support $z_N(e)=\idd_{H_1\bigoplus H_2}$.
\item[(iii)] $Ne =\left\{ \begin{pmatrix} \r_1(x_1)\quad\!& \r_1(x_1) V^*\\
\r_2(x_2) V&\r_2(x_2)\quad\, \end{pmatrix} ; x_1\in N_1e_1\;\! , x_2\in N_2e_2\right\}\;\! .$
\item[(iv)] For $j=1,2$, and $x_j\in N_je_j$, if $0\leq z_j\in Z$ is defined by $x_j^*x_j=\pi_j(z_j)e_j$,
then
\begin{equation}
\label{settete}
\begin{split}
\left\|\begin{pmatrix} 
	 \r_{1}(x_1)& \r_{1}(x_1)V^*\\
	 \r_{2}(x_2)V& \r_{2}(x_2)\end{pmatrix}\right\|
=&\sqrt2\:\!\big\|z_1+z_2\big\|^{1/2}\\
\geq&\sqrt2\max\big(\|x_1\|,\|x_2\|\big)\,.
\end{split}
\end{equation}
Moreover, if additionally $x_1=0$ or $x_2=0$, the inequality in \eqref{settete} is indeed an equality.
\end{itemize}
\end{prop}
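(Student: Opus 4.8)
The plan is to reduce every assertion to a block‑matrix computation governed by the multiplicativity relations \eqref{intertw.}--\eqref{bikkk} for the intertwiner spaces and by the support identities for $V$. I would first record, from \eqref{def.V} together with $V=(VV^*)V=V(V^*V)$, that $\r_2(e_2)V=V=V\r_1(e_1)$ and, by passing to adjoints, $V^*\r_2(e_2)=V^*=\r_1(e_1)V^*$; and I would introduce the column operator $\eta:=\bigl(\begin{smallmatrix}\r_1(e_1)\\ V\end{smallmatrix}\bigr)$, for which $\eta^*\eta=\r_1(e_1)+V^*V=2\r_1(e_1)$ and $\eta\eta^*=2e$. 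With these at hand, (i) is immediate: $\pi(z)$ is block‑diagonal with blocks $\r_j(\pi_j(z))$, so the identity $T\pi(z)=\pi(z)T$ for all $z\in Z$ is equivalent, entry by entry, to $T_{jk}\r_k(\pi_k(z))=\r_j(\pi_j(z))T_{jk}$, i.e.\ to $T_{jk}\in(N_k,\pi_k;N_j,\pi_j)$.

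For (iii) I would prove the two inclusions separately. For ``$\supseteq$'', given $x_j=n_je_j\in N_je_j$, the block‑diagonal operator $2\r_1(n_1)\oplus 2\r_2(n_2)$ lies in $N$ by (i), and multiplying it by $e=\tfrac12\eta\eta^*$ yields, after using $\r_1(e_1)V^*=V^*$ and $\r_2(e_2)V=V$, exactly the displayed matrix. For ``$\subseteq$'', I would take $T=(T_{jk})\in N$ and note, via (i), \eqref{bikkk} and \eqref{intertw.}, that $T_{11},\,T_{12}V,\,V^*T_{21},\,V^*T_{22}V\in\r_1(N_1)$ while $T_{21}V^*,\,T_{22}\in\r_2(N_2)$. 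Writing $Te=\tfrac12(T\eta)\eta^*$, the top entry of the column $T\eta$ becomes $(T_{11}+T_{12}V)\r_1(e_1)\in\r_1(N_1e_1)$, say $\r_1(x_1)$ with $x_1\in N_1e_1$, and the bottom entry becomes $(T_{21}V^*+T_{22})\r_2(e_2)V=\r_2(x_2)V$ with $x_2\in N_2e_2$ (using $\r_1(e_1)=V^*V$ and $\r_2(e_2)V=V$); right‑multiplying by $\eta^*=(\r_1(e_1),V^*)$ and invoking $\r_1(e_1)V^*=V^*$, $VV^*=\r_2(e_2)$ produces the matrix form of (iii).

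For (ii): $e\in N$ follows from (i), its blocks $\tfrac12\r_1(e_1),\tfrac12V^*,\tfrac12V,\tfrac12\r_2(e_2)$ lying in the respective intertwiner spaces, and $e$ is a projection as already remarked. To prove $e$ abelian I would set $u:=\tfrac1{\sqrt2}\eta$, so that $u^*u=\r_1(e_1)$, $uu^*=e$, and $S\mapsto u^*Su$ is an injective $*$-homomorphism of $eNe$ into $\cb(\r_1(e_1)H_1)$; for $T\in N$ one computes $u^*Tu=\tfrac12\r_1(e_1)\bigl(T_{11}+T_{12}V+V^*T_{21}+V^*T_{22}V\bigr)\r_1(e_1)$, and since the bracket is a sum of elements of $\r_1(N_1)$, say $\r_1(m)$ with $m\in N_1$, this equals $\tfrac12\r_1\bigl(\F_{e_1}(m)e_1\bigr)$, which lies inside the MASA $\r_1(Z(N_1))_{\r_1(e_1)}$; hence $u^*(eNe)u$, and so $eNe$, is commutative. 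For the central support I would use that the central projections of $N$ are exactly the $\pi(q)$ with $q$ a projection of $Z$; if $\pi(q)\ge e$, comparing $(1,1)$-blocks gives $\pi_1(q)e_1=e_1$, forcing $\pi_1(q)\ge z_{N_1}(e_1)=\idd_{N_1}$, hence $q=\idd_Z$ and $\pi(q)=\idd$, so $z_N(e)=\idd_{H_1\oplus H_2}$.

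For (iv), writing $Y$ for the matrix in \eqref{settete}, I would compute $Y^*Y$ block by block using $x_j^*x_j=\pi_j(z_j)e_j$, the relations for $V$, and \eqref{V-intertw.}; everything collapses to $Y^*Y=2\,\pi(z_1+z_2)\,e$. Then $\|Y\|^2=\|Y^*Y\|=2\|\pi(z_1+z_2)e\|=2\|z_1+z_2\|$, where I would invoke the elementary fact that $\|ce\|=\|c\|$ for $c\in Z(N)^+$ whenever $z_N(e)=\idd$ (if $\|ce\|<\|c\|$, pick $\beta\in(\|ce\|,\|c\|]$ in the spectrum of $c$; the spectral projection $q\neq0$ of $c$ for $[\beta,\|c\|]$ is central with $cq\ge\beta q$, so $qe$ is a projection with $\beta\|qe\|\le\|cqe\|\le\|ce\|<\beta$, forcing $qe=0$, i.e.\ $z_N(e)\le\idd-q<\idd$, a contradiction). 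This gives the first equality of \eqref{settete}; the same norm identity in $N_j$, with $z_{N_j}(e_j)=\idd_{N_j}$, yields $\|x_j\|^2=\|\pi_j(z_j)e_j\|=\|z_j\|\le\|z_1+z_2\|$ since $0\le z_j\le z_1+z_2$ in the abelian $Z$, which is the stated inequality, and the equality case is clear because $x_2=0$ forces $z_2=0$, hence $\|z_1+z_2\|=\|x_1\|^2=\max(\|x_1\|^2,\|x_2\|^2)$ (and symmetrically). I expect the main obstacle to be the bookkeeping in ``$\subseteq$'' of (iii) and in the abelianness part of (ii): one must combine the multiplicativity of the intertwiner spaces $(N_k,\pi_k;N_j,\pi_j)$ with the correct one‑sided support identities for $V$ so that every cross‑term collapses into $\r_1(N_1e_1)$, respectively into the MASA $\r_1(Z(N_1))_{\r_1(e_1)}$; once these reductions are set up, the remaining verifications are routine.
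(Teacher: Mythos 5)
Your proof is correct and follows essentially the same route as the paper's: (i) and (iii) are handled by the same block-matrix manipulations with the intertwiner spaces and the one-sided support identities for $V$, the central support in (ii) by the same comparison of $(1,1)$-blocks, and (iv) by the same computation $Y^*Y=2\,\pi(z_1+z_2)e$ (you additionally spell out the norm identity $\|ce\|=\|c\|$ for central $c\geq 0$ when $z_N(e)=\idd$, which the paper uses tacitly). The only cosmetic difference is the abelianness of $e$: the paper shows $eNe\subset\pi(Z)e$ by a direct matrix computation, whereas you compress by $u=\tfrac1{\sqrt2}\bigl(\begin{smallmatrix}\r_1(e_1)\\ V\end{smallmatrix}\bigr)$ into the commutative algebra $\r_1(Z(N_1))_{\r_1(e_1)}$ --- an equivalent argument built from the same ingredients.
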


\begin{proof}
(i) follows by using, for $T=\begin{pmatrix} 
	 T_{11}&T_{12}\\
	T_{21}&T_{22}\end{pmatrix}\in \cb (H_1\bigoplus H_2)$ and

\noindent $z\in Z$, the formulas
\medskip

\centerline{$\displaystyle T \pi (z)=\begin{pmatrix}  T_{11}\r_1(\pi_1(z))
&T_{12} \r_2(\pi_2(z)) \\ T_{21}\r_1(\pi_1(z)) &T_{22} \r_2(\pi_2(z)) \end{pmatrix} ,$}
\smallskip

\centerline{$\displaystyle \pi (z) T =\begin{pmatrix} \r_1(\pi_1(z)) T_{11}
& \r_1(\pi_1(z)) T_{12} \\ \r_2(\pi_2(z)) T_{21} & \r_2(\pi_2(z)) T_{22} \end{pmatrix}.$}
\medskip

For inclusion $\subset$ in (iii), let $\displaystyle T=\begin{pmatrix} 
T_{11}&T_{12} \\ T_{21}&T_{22}\end{pmatrix}\in N$ be arbitrary. Then
\begin{equation*}
\begin{split}
T e=\;& \frac 12 \begin{pmatrix} T_{11}&T_{12} \\ T_{21}&T_{22}\end{pmatrix}
\begin{pmatrix} \r_1(e_1)&V^* \\ V& \r_2(e_2) \end{pmatrix} \\
=\; & \frac 12  \begin{pmatrix} T_{11}\r_1(e_1) + T_{12} V &
T_{11} V^* + T_{12} \r_2(e_2) \\ T_{21}\r_1(e_1) + T_{22} V &
T_{21} V^* + T_{22} \r_2(e_2) \end{pmatrix}
\end{split}
\end{equation*}
By the just proved (i) and by (\ref{intertw.}), we have
\smallskip

\centerline{$T_{11}\r_1(e_1) + T_{12} V = (T_{11}+T_{12} V) \r_1(e_1)$}
\smallskip

\noindent with
\smallskip

\centerline{$T_{11}+T_{12} V\in (N_1 ,\pi_1 ; N_1 ,\pi_1) +
(N_2 ,\pi_2 ; N_1 ,\pi_1) (N_1 ,\pi_1 ; N_2 ,\pi_2)$}
\smallskip

\noindent\hspace{2.85 cm}$\subset (N_1 ,\pi_1 ; N_1 ,\pi_1) = \r_1(N_1)\;\! ,$
\smallskip

\noindent hence $T_{11}\r_1(e_1) + T_{12} V\in \r_1(N_1) \r_1(e_1) =\r_1(N_1 e_1)\;\! .$
In other words,
\medskip

\centerline{$T_{11}\r_1(e_1) + T_{12} V =\r_1(2\;\! x_1)$ for some $x_1\in N_1e_1\;\! .$}
\medskip

\noindent Then, by (\ref{def.V}),
\medskip

\centerline{$T_{11} V^* + T_{12}\;\! \r_2(e_2) =T_{11} \r_1(e_1)V^* + T_{12} V V^* =
\r_1(2\;\! x_1) V^* .$}
\smallskip

\noindent Similarly,
\smallskip

\centerline{$T_{21} V^* + T_{22}\;\! \r_2(e_2) =\r_2(2\;\! x_2)$ for some $x_2\in N_2e_2$}
\smallskip

\noindent and

\centerline{$T_{21}\r_1(e_1) + T_{22} V =\r_2(2\;\! x_2) V .$}
\smallskip

\noindent In conclusion,
\smallskip

\centerline{$\displaystyle T e = \frac 12 \begin{pmatrix} \r_1(2\;\! x_1)\quad\!& \r_1(2\;\! x_1) V^*\\
\r_2(2\;\! x_2) V&\r_2(2\;\! x_2)\quad\, \end{pmatrix} = \begin{pmatrix} \r_1(x_1)\quad\!& \r_1(x_1) V^*\\
\r_2(x_2) V&\r_2(x_2)\quad\, \end{pmatrix}$}
\medskip

\noindent with $x_1\in N_1e_1$ and $x_2\in N_2e_2\;\!$.
\smallskip

For the proof of inclusion $\supset$ in (iii), let $x_1\in N_1e_1$ and $x_2\in N_2e_2$ be arbitrary.
Then
\medskip

\centerline{$\displaystyle \begin{pmatrix} \r_1(x_1)\quad\!& \r_1(x_1) V^*\\
\r_2(x_2) V&\r_2(x_2)\quad\, \end{pmatrix} = \begin{pmatrix} \r_1(x_1)& 0\\
0&\r_2(x_2) \end{pmatrix} \begin{pmatrix} \r_1(e_1)&V^* \\ V& \r_2(e_2) \end{pmatrix}$}
\smallskip

\noindent\hspace{5.02 cm}$=\begin{pmatrix} \r_1(2\;\! x_1)& 0\\
0&\r_2(2\;\! x_2) \end{pmatrix} e$
\medskip

\noindent belongs to $Ne$ acording to (i).
\smallskip

To prove that $e$ is an abelian projection in $N$, we have to show that $eNe\subset \pi(Z) e\;\!$.
According to (iii), the generic element of $eNe$ is of the form
\begin{equation*}
x:=e  \begin{pmatrix} \r_1(x_1)\quad\!& \r_1(x_1) V^*\\
\r_2(x_2) V&\r_2(x_2)\quad\, \end{pmatrix}
\end{equation*}
with $x_1\in N_1e_1\;\! ,x_2\in N_2e_2\;\!$, so the proof will be done by showing that
the above $x$ belongs to $\pi(Z) e\;\!$.

Since $e_j$ is an abelian projection in $N_j\;\! , j=1\;\! ,2\;\!$, there exist $z_1\;\! ,z_2\in Z$
such that $e_j x_j =\pi_j(z_j) e_j\;\! , j=1\;\! ,2\;\!$. By (\ref{def.V}) and \eqref{V-intertw.}, we then have also
\begin{equation*}
\begin{split}
V\r_1(x_1) =\;&V\r_1(e_1x_1)=V\r_1(e_1\pi_1(z_1))
=V\r_1(\pi_1(z_1))\\
 =\;&\r_2(\pi_2(z_1))V
\end{split}
\end{equation*}
and, similarly,
\begin{equation*}
V^*\r_2(x_2) =\r_1(\pi_1(z_2)) V^* .
\end{equation*}
Therefore, $2\;\! x$ is equal to
\begin{equation*}
\begin{split}
&\begin{pmatrix} \r_1(e_1)&V^* \\ V& \r_2(e_2) \end{pmatrix}
\begin{pmatrix} \r_1(x_1)\quad\!& \r_1(x_1) V^*\\
\r_2(x_2) V&\r_2(x_2)\quad\, \end{pmatrix} \\
=&\begin{pmatrix} \r_1(e_1x_1)+V^*\r_2(x_2)V &\r_1(e_1x_1)V^* +V^*\r_2(x_2) \\
V\r_1(x_1)+\r_2(e_2x_2)V & V\r_1(x_1)V^*+\r_2(e_2x_2)\end{pmatrix} \\
=&\begin{pmatrix} \r_1(\pi_1(z_1))\r_1(e_1)+\r_1(\pi_1(z_2)) V^*V\!\! &
\r_1(\pi_1(z_1))V^*+\r_1(\pi_1(z_2)) V^* \\
\r_2(\pi_2(z_1)) V +\r_2(\pi_2(z_2)) V &
\!\!\r_2(\pi_2(z_1)) VV^* +\r_2(\pi_2(z_2))\r_2(e_2) \end{pmatrix} \\
=&\begin{pmatrix} \r_1(\pi_1(z_1+z_2))\r_1(e_1) &\r_1(\pi_1(z_1+z_2))V^* \\
\r_2(\pi_2(z_1+z_2)) V & \r_2(\pi_2(z_1+z_2)) \r_2(e_2) \end{pmatrix} \\
=&\;2\;\!\pi (z_1+z_2)\;\! e\in \pi (Z) e\;\! .
\end{split}
\end{equation*}

It is easy to check that the central support of $e$ in $N$ is $\pi (\idd_Z)=\idd_N\;\!$. Indeed, if $p\in Z$ is a projection such that
$e\leq\pi(p)\iff e\pi(p)=e$, that is
$$
\frac12\begin{pmatrix} \r_1(e_1\pi_1(p)) &V^*\r_2(\pi_2(p)) \\
V\r_1(\pi_1(p)) & \r_2(e_2\pi_2(p))\end{pmatrix} 
=\frac12\begin{pmatrix} \r_1(e_1) &V^* \\
V& \r_2(e_2)\end{pmatrix}\,,
$$
then $e_1\pi_1(p)=e_1$. Since $z_{N_1}(e_1)=\idd_{N_1}$, $p$ must be equal to $\idd_Z$.

For (iv),

\medskip

\noindent\hspace{1.44 cm}$\left\|\begin{pmatrix} 
\r_{1}(x_1)& \r_{1}(x_1)V^*\\
	 \r_{2}(x_2)V& \r_{2}(x_2)\end{pmatrix}\right\|^2$
	 \smallskip

\noindent\hspace{1 cm}$= \left\|\begin{pmatrix} 
\r_{1}(x_1)& \r_{1}(x_1)V^*\\
	 \r_{2}(x_2)V& \r_{2}(x_2)\end{pmatrix}^{\!*}\!\! \begin{pmatrix} 
\r_{1}(x_1)& \r_{1}(x_1)V^*\\
	 \r_{2}(x_2)V& \r_{2}(x_2)\end{pmatrix}\right\|$
\smallskip

\noindent\hspace{1 cm}$= \left\|\begin{pmatrix} 
	 \r_{1}(x_1^*x_1)+V^*\r_{2}(x_2^*x_2)V& \r_{1}(x_1^*x_1)V^*+V^*\r_{2}(x_2^*x_2)\\
	 V \r_{1}(x_1^*x_1)+\r_{2}(x_2^*x_2)V& V \r_{1}(x_1^*x_1)V^*+\r_{2}(x_2)\end{pmatrix}\right\|$
\smallskip

\noindent\hspace{1 cm}$= \left\|\begin{pmatrix} 
	 \r_{1}(\pi_1(z_1+z_2)) \r_{1}(e_1)& \r_{1}(\pi_1(z_1+z_2)V^*\\
	 \r_{2}(\pi_2(z_1+z_2)V&  \r_{2}(\pi_2(z_1+z_2))\r_{2}(e_2)\end{pmatrix}\right\|$
\smallskip

\noindent\hspace{1 cm}$= 2\:\!\big\|\pi(z_1+z_2)\:\!e\big\|=2\:\!\big\|\pi(z_1+z_2)\big\|=2\:\!\|z_1+z_2\|$
\smallskip

\noindent\hspace{1 cm}$\geq 2\max\big(\|z_1\|,\|z_2\|\big)
=2\max\big(\|x_1\|^2,\|x_2\|^2\big)\;\! .$

\medskip

\noindent Finally, if for example $x_1=0$, and consequently $z_1=0$, then 
\begin{align*}
\|z_1+z_2\|=\|z_2\|=\|x_2\|^2
=\max\big(\|x_1\|^2,\|x_2\|^2\big)\,.
\end{align*}
\end{proof}

Using the above notations, each $T\in (N_1 ,\pi_1 ; N_2 ,\pi_2)$ induces, by left
multiplication, a bounded $Z$-linear map $\mathcal{L}^{N_1e_1,N_2e_2}(T) : N_1e_1\to N_2e_2\;\!$.
More precisely, for every $x_1\in N_1e_1\;\!$, $T\circ\r_1(x_1)\circ V^*\in \cb (H_2)$ is
in the commutant of $\r_2( \pi_2 (Z) )\;\!$, which is by the choice of $\r_2$ equal to $\r_2(N_2)\;\!$.
Moreover, by (\ref{def.V}) we have $T\circ\r_1(x_1)\circ V^*\circ \r_2(e_2) = T\circ\r_1(x_1)\circ V^*$,
so actually $T\circ\r_1(x_1)\circ V^*\in \r_2(N_2e_2)\;\!$.
Therefore the equation
\begin{equation}
\label{ro021}
\r_2\big( \mathcal{L}^{N_1e_1,N_2e_2}(T)\;\! x_1\big):=T\circ\r_1(x_1)\circ V^*\;\! ,\quad x_1\in N_1e_1
\end{equation} 
defines a map $\mathcal{L}^{N_1e_1,N_2e_2}(T) : N_1e_1\longrightarrow N_2e_2$. Straightforward verification shows that
$\mathcal{L}^{N_1e_1,N_2e_2}(T)$ belongs to $\cb_Z\big(N_1e_1,N_2e_2\big)$ and its norm is $\leq \| T\|\;\!$.

Similarly, for each $S\in (N_2 ,\pi_2 ; N_1 ,\pi_1)\;\!$,the equation
\begin{equation}
\label{ro021star}
\r_1\big(\mathcal{L}^{N_2e_2,N_1e_1}(S)\;\! x_2\big):=S\circ\r_2(x_2)\circ V\;\! ,\quad x_2\in N_2e_2
\end{equation}
defines a map $\mathcal{L}^{N_2e_2,N_1e_1}(S)\in \cb_Z\big(N_2e_2,N_1e_1\big)$.

Clearly, the mappings
\medskip

\centerline{$
\begin{array}{l}\mathcal{L}^{N_1e_1,N_2e_2} : (N_1 ,\pi_1 ; N_2 ,\pi_2)\longrightarrow \cb_Z\big(N_1e_1,N_2e_2\big) , \\
\mathcal{L}^{N_2e_2,N_1e_1} : (N_2 ,\pi_2 ; N_1 ,\pi_1)\longrightarrow \cb_Z\big(N_2e_2,N_1e_1\big)\end{array}
$}
\medskip

\noindent are $Z$-linear. They intertwine the $*$-operation:
\begin{equation}
\label{dupou}
\mathcal{L}^{N_1e_1,N_2e_2}(T)^*=\mathcal{L}^{N_2e_2,N_1e_1}(T^*)\,,\quad T\in (N_1 ,\pi_1 ; N_2 ,\pi_2)\;\! .
\end{equation}
For (\ref{dupou}) we have to show that, for every $x_1\in N_1e_1$ and $x_2\in N_2e_2\;\!$,
\begin{equation}
\label{dupou2}
\langle\;\! \mathcal{L}^{N_1e_1,N_2e_2}(T)\;\! x_1 \!\mid\! x_2\;\! \rangle_{\substack{ {} \\ N_2e_2}} =
\langle\;\! x_1 \!\mid\! \mathcal{L}^{N_2e_2,N_1e_1}(T^*)\;\! x_2\;\! \rangle_{\substack{ {} \\ N_1e_1}}
\end{equation}
holds true. But, since
\medskip

\noindent\hspace{9.3 mm}$\r_2\Big( \pi_2\big( \langle\;\! \mathcal{L}^{N_1e_1,N_2e_2}(T)\;\! x_1 \!\mid\! x_2\;\!
\rangle_{\substack{ {} \\ N_2e_2}} \big)\! \Big) V$
\smallskip

\noindent\hspace{0.5 mm}$\overset{(\ref{def.V})}{=}\r_2\Big( \pi_2\big( \langle\;\!
\mathcal{L}^{N_1e_1,N_2e_2}(T)\;\! x_1 \!\mid\! x_2\;\! \rangle_{\substack{ {} \\ N_2e_2}}\big)\;\! e_2\Big) V$
\smallskip

\noindent\hspace{2.6 mm}$=\;\;\r_2\Big(\! \big( \mathcal{L}^{N_1e_1,N_2e_2}(T)\;\! x_1 \!\mid\! x_2\;\!
\big)_{\!\substack{ {} \\ N_2e_2}} e_2\Big) V
=\r_2(x_2^{\;\! *}) \r_2\big( \mathcal{L}^{N_1e_1,N_2e_2}(T)\;\! x_1\big) V$
\smallskip

\noindent\hspace{0.5 mm}$\overset{(\ref{ro021})}{=}\r_2(x_2^{\;\! *})\circ T\circ\r_1(x_1)\circ (V^*V)
\overset{(\ref{def.V})}{=}(V V^*)\circ \r_2(x_2^{\;\! *})\circ T\circ\r_1(x_1)$
\medskip

\noindent\hspace{2.6 mm}$=\;\; V\circ\big( T^*\! \circ \r_2(x_2)\circ V\big)^*\! \circ \r_1(x_1)$
\smallskip

\noindent\hspace{0.5 mm}$\overset{(\ref{ro021star})}{=}V\circ \r_1\big( \mathcal{L}^{N_2e_2,N_1e_1}(T^*)\;\!
x_2\big)^*\! \circ \r_1(x_1)$
\smallskip

\noindent\hspace{2.6 mm}$=\;\; V \r_1 \Big( e_1 \big( x_1 \!\mid\! \mathcal{L}^{N_2e_2,N_1e_1}(T^*)\;\! x_2
\big)_{\!\substack{ {} \\ N_1e_1}}\Big)$
\smallskip

\noindent\hspace{2.6 mm}$=\;\; V \r_1\Big( e_1 \pi_1\big( \langle\;\! x_1 \!\mid\!
L^{N_2e_2,N_1} e_1)(T^*)\;\! x_2\;\! \rangle_{\!\substack{ {} \\ N_1e_1}}\big)\! \Big)$
\smallskip

\noindent\hspace{0.5 mm}$\overset{(\ref{def.V})}{=}V \r_1\Big( \pi_1\big( \langle\;\! x_1 \!\mid\!
L^{N_2e_2,N_1} e_1)(T^*)\;\! x_2\;\! \rangle_{\!\substack{ {} \\ N_1e_1}}\big)\! \Big)$
\smallskip

\noindent\hspace{0.5 mm}$\overset{(\ref{V-intertw.})}{=}\r_2\Big( \pi_2\big( \langle\;\! x_1 \!\mid\!
L^{N_2e_2,N_1} e_1)(T^*)\;\! x_2\;\! \rangle_{\!\substack{ {} \\ N_1e_1}}\big)\! \Big) V ,$
\smallskip

\noindent using (\ref{def.V}), the injectivity of $\r_2$ and $\pi_2\;\!$, and
$z_{N_2}(e_2)=\idd_{N_2}\;\!$, the proof of (\ref{dupou2}) is straightforward.
\smallskip

We now prove the analogue of Theorem \ref{isotr} for $\cb_Z (N_1e_1,N_2e_2)$.

\begin{thm}
\label{atdu}
Let $Z$ be an abelian $W^*$-algebra and, for $j=1,2$, $N_j$ a type $\ty{I}$ $W^*$-algebra,
$e_j\in N_j$ an abelian projection with central support $z_{N_j}(e_j)=\idd_{N_j}\;\!$,  
$\pi_j:Z\to Z(N_j)$ a $*$-isomorphism, and $(\r_j,H_j)$ a spatial representation of $N_j$
such that  $\r_j(N_j)'=\r_j(Z(N_j))=\r_j(\pi_j(Z))$.

Let further $(N_j ,\pi_j ; N_k ,\pi_k)$, $j,k=1,2$, be defined in \eqref{cucus}, and fix some
$V\in(N_1,\pi_1 ; N_2 ,\pi_2)$ satisfying \eqref{def.V} and \eqref{V-intertw.}.

Then 
\begin{equation}
\label{mod.maps}
\cb_Z\big(N_1e_1,N_2e_2\big) =\big\{\mathcal{L}^{N_1e_1,N_2e_2}(T)\,;\, T\in (N_1 ,\pi_1 ; N_2 ,\pi_2)\big\} ,
\end{equation}
where
$\mathcal{L}^{N_1e_1,N_2e_2}(T) : N_1e_1\longrightarrow N_2e_2$ is defined by $(\ref{ro021})$, and
\begin{equation}
\label{ro21}
\big\| \mathcal{L}^{N_1e_1,N_2e_2}(T)\big\|=\|T\|\,,\quad T\in (N_1 ,\pi_1 ; N_2 ,\pi_2)\;\! .
\end{equation}
Thus $\mathcal{L}^{N_1e_1,N_2e_2} : (N_1 ,\pi_1 ; N_2 ,\pi_2)\longrightarrow \cb_Z\big(N_1e_1,N_2e_2\big)$
is an isometric $Z$-linear isomorphism.
Moreover,
\begin{equation}
\mathcal{L}^{N_1e_1,N_2e_2}(T)^*=\mathcal{L}^{N_2e_2,N_1e_1}(T^*)\,,\quad T\in (N_1 ,\pi_1 ; N_2 ,\pi_2)\;\! ,
\tag{\ref{dupou}}
\end{equation}
where $\mathcal{L}^{N_2e_2,N_1e_1}(T^*)$ is defined by $(\ref{ro021star})$.
\end{thm}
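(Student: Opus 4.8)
Since the $Z$-linearity of $\mathcal{L}^{N_1e_1,N_2e_2}$, the estimate $\big\|\mathcal{L}^{N_1e_1,N_2e_2}(T)\big\|\leq\|T\|$, and the $*$-relation \eqref{dupou} have already been established above, the plan is to prove the two remaining assertions — surjectivity of $\mathcal{L}^{N_1e_1,N_2e_2}$ and the reverse inequality $\big\|\mathcal{L}^{N_1e_1,N_2e_2}(T)\big\|\geq\|T\|$ (injectivity then being automatic, an isometry being injective) — by transporting the question to the single Hilbert module $Ne$ of Proposition \ref{wuatt} and invoking Theorem \ref{isotr} there. Recall that the module norm on $Ne$ (and on each $N_je_j$) coincides with the operator norm.

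First I would bring in the two projections $p:=\begin{pmatrix}\idd_{H_1}&0\\0&0\end{pmatrix}$ and $p':=\idd_{H_1\bigoplus H_2}-p=\begin{pmatrix}0&0\\0&\idd_{H_2}\end{pmatrix}$, which lie in $N=\pi(Z)'$. By Proposition \ref{wuatt} (iii), $pNe$ is exactly the set of matrices $\begin{pmatrix}\r_1(x_1)&\r_1(x_1)V^*\\0&0\end{pmatrix}$ with $x_1\in N_1e_1$, and $p'Ne$ the set of matrices $\begin{pmatrix}0&0\\\r_2(x_2)V&\r_2(x_2)\end{pmatrix}$ with $x_2\in N_2e_2$; in particular $Ne=pNe\oplus p'Ne$. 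Moreover, by Proposition \ref{wuatt} (iv) with $x_2=0$, respectively $x_1=0$, the maps $\Phi_1:x_1\longmapsto\begin{pmatrix}\r_1(x_1)&\r_1(x_1)V^*\\0&0\end{pmatrix}$ and $\Phi_2:x_2\longmapsto\begin{pmatrix}0&0\\\r_2(x_2)V&\r_2(x_2)\end{pmatrix}$ are $Z$-module isomorphisms of $N_1e_1$ onto $pNe$ and of $N_2e_2$ onto $p'Ne$ which both multiply the norm by the factor $\sqrt2$. Since these two factors agree, $S\longmapsto\Phi_2\circ S\circ\Phi_1^{-1}$ is an \emph{isometric} $Z$-linear isomorphism of $\cb_Z(N_1e_1,N_2e_2)$ onto $\cb_Z(pNe,p'Ne)$.

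Next I would identify $\cb_Z(pNe,p'Ne)$ with a corner of $N$. For $S\in\cb_Z(pNe,p'Ne)$ the map $X\mapsto S(pX)$ is a bounded $Z$-linear endomorphism of $Ne$, hence by Theorem \ref{isotr} it equals $L^{Ne}(a)$ for a unique $a\in N$; since it is unaffected by pre-composition with $L^{Ne}(p)$ and post-composition with $L^{Ne}(p')$, uniqueness (i.e.\ injectivity of $L^{Ne}$) forces $a=p'ap\in p'Np$, and conversely every $a\in p'Np$ arises this way. Thus $\cb_Z(pNe,p'Ne)=\{\,L^{Ne}(a)|_{pNe}\,;\,a\in p'Np\,\}$, and this correspondence is \emph{isometric}, because for $a\in p'Np$ one has $\big\|L^{Ne}(a)|_{pNe}\big\|=\|L^{Ne}(a)\|=\|a\|$ (using $a=ap$, $\|pX\|\leq\|X\|$ and Theorem \ref{isotr}). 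By Proposition \ref{wuatt} (i), $p'Np=\{\,b_T:=\begin{pmatrix}0&0\\T&0\end{pmatrix}\,;\,T\in(N_1,\pi_1;N_2,\pi_2)\,\}$ with $\|b_T\|=\|T\|$, and a direct matrix computation based on \eqref{def.V} and \eqref{ro021} shows that, under $\Phi_1$ and $\Phi_2$, the operator $L^{Ne}(b_T)|_{pNe}$ corresponds exactly to $\mathcal{L}^{N_1e_1,N_2e_2}(T)$. Chaining all the identifications then yields at once $\cb_Z(N_1e_1,N_2e_2)=\{\,\mathcal{L}^{N_1e_1,N_2e_2}(T)\,;\,T\in(N_1,\pi_1;N_2,\pi_2)\,\}$ and $\big\|\mathcal{L}^{N_1e_1,N_2e_2}(T)\big\|=\|b_T\|=\|T\|$, so that $\mathcal{L}^{N_1e_1,N_2e_2}$ is the asserted isometric $Z$-linear isomorphism; injectivity is automatic, and \eqref{dupou} has already been proved.

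The one genuinely delicate point is the norm equality: "$\leq$" is immediate, whereas "$\geq$" rests on the sharp norm formula of Proposition \ref{wuatt} (iv), and specifically on its \emph{equality} clause when $x_1=0$ or $x_2=0$ — this is exactly what makes $\Phi_1$ and $\Phi_2$ rescale the norm by the \emph{same} factor $\sqrt2$, hence what upgrades the induced isomorphism $\cb_Z(N_1e_1,N_2e_2)\cong\cb_Z(pNe,p'Ne)$ from merely bounded-with-bounded-inverse to isometric. The remaining steps are routine bookkeeping with the matrix picture of $Ne$ and the composition/adjoint rules \eqref{intertw.}.
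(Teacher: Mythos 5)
Your proposal is correct, and its core strategy --- transporting the problem to the single module $Ne$ of Proposition \ref{wuatt} and invoking the surjectivity of $L^{Ne}$ from Theorem \ref{isotr} --- is the same as the paper's. Indeed, your composite map $X\mapsto(\Phi_2\circ S\circ\Phi_1^{-1})(pX)$ is exactly the operator $\widetilde S$ that the paper defines on $Ne$, and extracting the $(2,1)$ corner $p'Rp$ of the representing element $R\in N$ is precisely how the paper produces the $T$ with $S=\mathcal{L}^{N_1e_1,N_2e_2}(T)$. Where you genuinely diverge is the norm identity \eqref{ro21}: the paper proves it by a direct computation, writing $T^*T=\r_1(a_1)$ for some $0\leq a_1\in N_1$ (via \eqref{intertw.} and \eqref{bikkk}) and reducing $\|\mathcal{L}^{N_1e_1,N_2e_2}(T)\|$ to $\|L^{N_1e_1}(a_1^{1/2})\|=\|a_1^{1/2}\|=\|T\|$ through \eqref{op.norm}; you instead obtain it structurally, by observing that $\Phi_1$ and $\Phi_2$ rescale norms by the \emph{same} factor $\sqrt2$ (the equality clause of Proposition \ref{wuatt} (iv)) and that $a\mapsto L^{Ne}(a)\lceil_{pNe}$ is isometric on the corner $p'Np$. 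Your route has the merit of making the whole map $\mathcal{L}^{N_1e_1,N_2e_2}$ visibly a composition of isometric identifications, so surjectivity, injectivity and the norm equality come out in one stroke; the paper's computation is more self-contained at that step but requires the separate $C^*$-algebraic manipulation with $T^*T$. Both uses of Proposition \ref{wuatt} (iv) are legitimate --- note only that the paper needs just the inequality in \eqref{settete} to get $\|\widetilde S\|\leq\|S\|$, whereas your argument genuinely relies on the equality case, as you correctly flag.
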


\begin{proof}
Inclusion $\supset$ in (\ref{mod.maps}) was already commented after (\ref{ro021}).

Conversely, fix now $S\in\cb_Z\big(N_1e_1,N_2e_2\big)$. With $N$ and $e$ as in
Proposition \ref{wuatt}, taking into account Proposition \ref{wuatt}, (iii), we can define
$\widetilde{S} : Ne\longrightarrow Ne$ by setting, for $x_1\in N_1e_1\,,x_2\in N_2e_2\;\!$,
$$
\widetilde{S}\begin{pmatrix} 
	 \r_{1}(x_1)& \r_{1}(x_1)V^*\\
	 \r_{2}(x_2)V&  \r_{2}(x_2)\end{pmatrix}:=\begin{pmatrix} 
	0& 0\\
	 \r_{2}(Sx_1)V&  \r_{2}(Sx_1)\end{pmatrix}\;\! .
$$
$\widetilde{S}$ is clearly $Z$-linear and Proposition \ref{wuatt}, (iv), leads to $\|\widetilde{S}\|\leq\|S\|$,
thus $\widetilde{S}$ defines an element of $\cb_Z\big(N_1e_1,N_2e_2\big)$. By Theorem \ref{isotr},
$\widetilde{S}=L^{N e}(R)$ for a unique element $R\in N$.
Since, by Proposition \ref{wuatt}, $R=\begin{pmatrix} 
	R_{11}& R_{12}\\
	R_{21}&  R_{22}\end{pmatrix}$ with $R_{jk}\in(N_k ,\pi_k ; N_j ,\pi_j)$, $j,k=1,2$, we have
$$
\begin{pmatrix} 
	0& 0\\
	 \r_{2}(Sx_1)V&  \r_{2}(Sx_1)\end{pmatrix}=\begin{pmatrix} 
	R_{11}& R_{12}\\
	R_{21}&  R_{22}\end{pmatrix}\begin{pmatrix} \r_1(x_1)& \r_1(x_1) V^*\\
\r_2(x_2) V&\r_2(x_2)\end{pmatrix}
$$
for all $x_1\in N_1e_1$ and $x_2\in N_2e_2$. Comparison of the matrix entries in position $22$ of both
members entails
$$
\r_2(Sx_1)=R_{21}\r_1(x_1)V^*+R_{22}\r_2(x_2)
$$
for any $x_1\in N_1e_1\;\! , x_2\in N_2e_2$. In particular, with $x_2=0$ we have
$$
\r_2(Sx_1)=R_{21}\r_1(x_1)V^*\,,\quad x_1\in N_1e_1\,,
$$
that is $S=\mathcal{L}^{N_1e_1,N_2e_2}(R_{21})$.

To prove \eqref{ro21}, for $x_1\in N_1e_1$ we compute
\begin{align*}
\big\|\mathcal{L}^{N_1e_1,N_2e_2}(T)x_1\big\|^2=&\big\|\r_2\big(\mathcal{L}^{N_1e_1,N_2e_2}(T)x_1\big)\big\|^2
=\|T\circ\r_1(x_1)\circ V^*\|^2\\
\overset{\text{\eqref{def.V}}}{=}&\|T\circ\r_1(x_1)\|^2
=\|\r_1(x_1^*)T^*T\r_1(x_1)\|\,.
\end{align*}
By \eqref{intertw.} and \eqref{bikkk}, $T^*T=\r_1(a_1)$ for some $0\leq a_1\in N_1$, and thus
\begin{align*}
\big\|\mathcal{L}^{N_1e_1,N_2e_2}(T)x_1\big\|^2=&\|\r_1(x_1^*a_1x_1)\|=\|x_1^*a_1x_1\|\\
=&\|a_1^{1/2}x_1\|=\big\|L^{N_1e_1}\big(a_1^{1/2}\big)x_1\big\|^2\,.
\end{align*}
Summarising,
\begin{align*}
\big\|\mathcal{L}^{N_1e_1,N_2e_2}(T)\big\|=
&\|L^{N_1e_1}\big(a_1^{1/2}\big)\big\|\overset{\text{\eqref{op.norm}}}{=}\big\|a_1^{1/2}\big\|\\
=&\|a_1\|^{1/2}=\|T^*T\|^{1/2}=\|T\|\,.
\end{align*}
\end{proof}

With the above notations, let now be $\gx_j=p_{\gx_j}N_je_j\subset N_je_j\;\! ,j=1\;\! ,2\;\! ,$
$s$-closed $Z$-submodules. Similarly as $\mathcal{L}^{N_1e_1,N_2e_2}$ was defined by (\ref{ro021}),

\noindent we obtain a $Z$-linear mapping
\medskip

\centerline{$\mathcal{L}^{\gx_1,\gx_2} : \r_2(p_{\gx_2})\circ(N_1,\pi_1;N_2,\pi_2)\circ\r_1(p_{\gx_1})
\longrightarrow \cb_Z(\gx_1,\gx_2)$}
\medskip

\noindent by defining $\mathcal{L}^{\gx_1,\gx_2}(T)$ for $T\in\r_2(p_{\gx_2})\circ(N_1,\pi_1;N_2,\pi_2)\circ
\r_1(p_{\gx_1})$ via the equation
\begin{equation}
\label{ro022}
\r_2\big(\mathcal{L}^{\gx_1,\gx_2}(T)\;\! x_1\big) :=T\circ\r_1(x_1)\circ V^*\;\! ,\quad x_1\in\gx_1\;\! .
\end{equation}
In the same way, for each $S\in \r_1(p_{\gx_1})\circ(N_2,\pi_2;N_1,\pi_1)\circ \r_2(p_{\gx_2})\;\!$,
the equation
\begin{equation}
\label{ro023}
\r_1\big(\mathcal{L}^{\gx_2,\gx_1}(S)\;\! x_2\big):=S\circ\r_2(x_2)\circ V\,,\quad x_2\in \gx_2
\end{equation}
defines some $\mathcal{L}^{\gx_2,\gx_1}(S)\in \cb_Z(\gx_2,\gx_1)$ and the obtained mapping
\medskip

\centerline{$\mathcal{L}^{\gx_2,\gx_1} : \r_1(p_{\gx_1})\circ(N_2,\pi_2;N_1,\pi_1)\circ\r_2(p_{\gx_2})
\longrightarrow \cb_Z(\gx_2,\gx_1)$}
\medskip

\noindent is $Z$-linear.

The proof of (\ref{dupou}) can be easily adapted to prove that $\mathcal{L}^{\gx_1,\gx_2}$ and
$\mathcal{L}^{\gx_2,\gx_1}$ intertwine the $*$-operation:
\begin{equation}
\label{dupou0}
\begin{split}
&\mathcal{L}^{\gx_1,\gx_2}(T)^*=\mathcal{L}^{\gx_2,\gx_1}(T^*)\,\,\,\text{for}\\
&T\in\r_2(p_{\gx_2})\circ(N_1,\pi_1;N_2,\pi_2)\circ\r_1(p_{\gx_1})\,,
\end{split}
\end{equation}

Theorem \ref{atdu} allows us to investigate the structure of $\cb_Z(\gx_1,\gx_2)\;\!$.

\begin{cor}
\label{mdol}
Using the notations of Theorem \ref{atdu}, for arbitrary $s$-closed $Z$-submodules
$\gx_1=p_{\gx_1}N_1e_1\subset N_1e_1$ and $\gx_2=p_{\gx_2}N_2e_2\subset N_2e_2$, we have
\begin{equation}
\label{mod.maps2}
\begin{split}
&\cb_Z(\gx_1,\gx_2) \\
=\;&\big\{\mathcal{L}^{\gx_1,\gx_2}(T)\,;\,T\in\r_2(p_{\gx_2})\circ(N_1,\pi_1;N_2,\pi_2)\circ\r_1(p_{\gx_1})\big\}\;\! ,
\end{split}
\end{equation}
where $\mathcal{L}^{\gx_1,\gx_2}(T):\gx_1\to\gx_2$ is defined by $(\ref{ro022})$, and
\begin{equation}
\label{ro22}
\big\| \mathcal{L}^{\gx_1,\gx_2}(T)\big\|=
\|T\|\;\! ,\quad T\in\r_2(p_{\gx_2})\circ(N_1,\pi_1;N_2,\pi_2)\circ\r_1(p_{\gx_1})\;\! .
\end{equation}
Consequently $\mathcal{L}^{\gx_1,\gx_2} : \r_2(p_{\gx_2})\circ(N_1,\pi_1;N_2,\pi_2)\circ\r_1(p_{\gx_1})
\longrightarrow \cb_Z(\gx_1,\gx_2)$ is an isometric $Z$-linear isomorphism.
Moreover,
\begin{equation*}
\begin{split}
&\mathcal{L}^{\gx_1,\gx_2}(T)^*=\mathcal{L}^{\gx_2,\gx_1}(T^*)\,\,\, {\rm for}\\
&T\in\r_2(p_{\gx_2})\circ(N_1,\pi_1;N_2,\pi_2)\circ\r_1(p_{\gx_1})\,,
\end{split} \tag{\ref{dupou0}}
\end{equation*}
where $\mathcal{L}^{\gx_2,\gx_1}(T^*)$ is defined by \eqref{ro023}.
\end{cor}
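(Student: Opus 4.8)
The plan is to reduce Corollary \ref{mdol} to Theorem \ref{atdu} by the same device used in the passage from Theorem \ref{isotr} to Corollary \ref{coun}: namely, pull back a module map on the submodules $\gx_1,\gx_2$ to a module map on the full modules $N_1e_1,N_2e_2$ via the projections $p_{\gx_1},p_{\gx_2}$, apply Theorem \ref{atdu} there, and then show that the intertwiner obtained can be trimmed to lie in $\r_2(p_{\gx_2})\circ(N_1,\pi_1;N_2,\pi_2)\circ\r_1(p_{\gx_1})$. First I would dispose of the inclusion $\supset$ in \eqref{mod.maps2} and of the norm inequality $\|\mathcal{L}^{\gx_1,\gx_2}(T)\|\le\|T\|$: both are routine from the defining equation \eqref{ro022} together with \eqref{def.V}, exactly as was remarked after \eqref{ro021} for the full-module case, the point being that $T\circ\r_1(x_1)\circ V^*$ lies in $\r_2(N_2e_2)$ and in fact, because $T$ is left-multiplied by $\r_2(p_{\gx_2})$ and $\r_1(x_1)=\r_1(p_{\gx_1}x_1)$ for $x_1\in\gx_1$, it lies in $\r_2(\gx_2)$.

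For the reverse inclusion, fix $S\in\cb_Z(\gx_1,\gx_2)$ and define $S_0:N_1e_1\to N_2e_2$ by $S_0x:=S(p_{\gx_1}x)$ for $x\in N_1e_1$; here I use Theorem \ref{nopr1}(i) to write $\gx_j=p_{\gx_j}N_je_j$, and one checks that $p_{\gx_1}x\in\gx_1$ so that $S_0$ is well defined, bounded, $Z$-linear, with $\|S_0\|=\|S\|$ (one direction is clear, the other since $p_{\gx_1}$ acts as the identity on $\gx_1$). By Theorem \ref{atdu} there is a unique $T\in(N_1,\pi_1;N_2,\pi_2)$ with $S_0=\mathcal{L}^{N_1e_1,N_2e_2}(T)$ and $\|T\|=\|S\|$. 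I then set $T':=\r_2(p_{\gx_2})\circ T\circ\r_1(p_{\gx_1})\in\r_2(p_{\gx_2})\circ(N_1,\pi_1;N_2,\pi_2)\circ\r_1(p_{\gx_1})$ and claim $S=\mathcal{L}^{\gx_1,\gx_2}(T')$. To see this, take $x_1\in\gx_1$, so $p_{\gx_1}x_1=x_1$ and $Sx_1=S(p_{\gx_1}x_1)=S_0x_1$, hence $\r_2(Sx_1)=T\circ\r_1(x_1)\circ V^*$; since $x_1\in\gx_1=p_{\gx_1}N_1e_1$ we have $\r_1(x_1)=\r_1(p_{\gx_1})\circ\r_1(x_1)$, and since $Sx_1\in\gx_2=p_{\gx_2}N_2e_2$ we have $\r_2(Sx_1)=\r_2(p_{\gx_2})\circ\r_2(Sx_1)$, so $\r_2(Sx_1)=\r_2(p_{\gx_2})\circ T\circ\r_1(p_{\gx_1})\circ\r_1(x_1)\circ V^*=T'\circ\r_1(x_1)\circ V^*$, which is exactly the defining relation \eqref{ro022} for $\mathcal{L}^{\gx_1,\gx_2}(T')$. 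Thus $S=\mathcal{L}^{\gx_1,\gx_2}(T')$, proving $\subset$, and combined with the already-noted $\|\mathcal{L}^{\gx_1,\gx_2}(T')\|\le\|T'\|\le\|T\|=\|S\|$ and the trivial $\|S\|=\|\mathcal{L}^{\gx_1,\gx_2}(T')\|$ we get both \eqref{ro22} and injectivity of $\mathcal{L}^{\gx_1,\gx_2}$, hence that it is an isometric $Z$-linear isomorphism.

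Finally, for \eqref{dupou0} I would run the computation exactly as in the proof of \eqref{dupou2}, inserting the projections $\r_1(p_{\gx_1})$ and $\r_2(p_{\gx_2})$ at the appropriate places: the key identities \eqref{def.V}, \eqref{V-intertw.}, \eqref{ro022}, \eqref{ro023}, the injectivity of $\r_2$ and $\pi_2$, and $z_{N_2}(e_2)=\idd_{N_2}$ are all still available, and the projections commute through because $T\in\r_2(p_{\gx_2})\circ(N_1,\pi_1;N_2,\pi_2)\circ\r_1(p_{\gx_1})$ by hypothesis; alternatively one can deduce \eqref{dupou0} formally from \eqref{dupou} by noting that the pull-back construction above is compatible with adjoints. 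The only genuinely delicate point — and the one I expect to be the main obstacle — is verifying that the trimmed intertwiner $T'=\r_2(p_{\gx_2})\circ T\circ\r_1(p_{\gx_1})$ really reproduces $S$ on all of $\gx_1$ and not merely up to the $p_{\gx_1},p_{\gx_2}$ corrections; this hinges on the two small observations $p_{\gx_1}x_1=x_1$ and $p_{\gx_2}(Sx_1)=Sx_1$ for $x_1\in\gx_1$, which follow from Theorem \ref{nopr1}(i), so once those are in hand the rest is bookkeeping.
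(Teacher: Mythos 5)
Your proposal is correct and follows essentially the same route as the paper: pull $S$ back to $S_0:N_1e_1\to N_2e_2$ via $p_{\gx_1}$, apply Theorem \ref{atdu} to get $T$, trim to $\r_2(p_{\gx_2})\circ T\circ\r_1(p_{\gx_1})$, and deduce \eqref{ro22} from $\|S_0\|=\|S\|$ together with \eqref{ro21}. The only (harmless) deviation is that you verify directly that the trimmed intertwiner $T'$ reproduces $S$ on $\gx_1$ via $p_{\gx_1}x_1=x_1$ and $p_{\gx_2}Sx_1=Sx_1$, whereas the paper first proves $\mathcal{L}^{N_1e_1,N_2e_2}(T)=\mathcal{L}^{N_1e_1,N_2e_2}(T')$ by an inner-product computation and then invokes the injectivity of $\mathcal{L}^{N_1e_1,N_2e_2}$ to conclude $T=T'$ — your shortcut is slightly cleaner and equally valid.
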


\begin{proof}
Inclusion $\supset$ in (\ref{mod.maps2}) was already explained above.

For the proof of the reverse inclusion, fix an arbitrary $S\in \cb_Z(\gx_1,\gx_2)\;\!$. Then
\begin{equation*}
S_0 :N_1e_1\ni x_1\longmapsto S(p_{\gx_1} x_1)\in \gx_2=p_{\gx_2}N_2e_2\subset N_2e_2\,,
\end{equation*}
is a bounded $Z$-linear map. Thus, by Theorem \ref{atdu}, there exists some

\noindent $T\in (N_1 ,\pi_1 ; N_2 ,\pi_2)$ satisfying $S_0 = \mathcal{L}^{N_1e_1,N_2e_2}(T)\;\!$.

We now claim
\begin{equation}
\label{stima2}
\mathcal{L}^{N_1e_1,N_2e_2}(T)=\mathcal{L}^{N_1e_1,N_2e_2}\big(\r_2(p_{\gx_2})\circ T\circ\r_1(p_{\gx_1})\big)\,.
\end{equation}
In other words, we have to show that, for all $x_1\in N_1e_1$ and $y_2\in N_2e_2\;\!$, it results
$$
\big(\mathcal{L}^{N_1e_1,N_2e_2}(T)x_1\!\mid\!y_2\big)_{N_2e_2}=
\big(\mathcal{L}^{N_1e_1,N_2e_2}\big(\r_2(p_{\gx_2})\circ T\circ\r_1(p_{\gx_1})\big)\;\!
x_1\,\big|\,y_2\big)_{N_2e_2}
$$
or, equivalently,
\begin{align*}
&\r_2\Big(\! \big(\mathcal{L}^{N_1e_1,N_2e_2}(T)\;\! x_1\,\big|\,y_2\big)_{N_2e_2}e_2\Big)\\
=\; &\r_2\Big(\! \big( \mathcal{L}^{N_1e_1,N_2e_2}\big(\r_2(p_{\gx_2})\circ T\circ\r_1(p_{\gx_1})\big)
x_1\,\big|\,y_2\big)_{N_2e_2}e_2\Big)\;\! .
\end{align*}
Indeed, we have
\medskip

\noindent\hspace{12.6 mm}$\r_2\Big(\big(\mathcal{L}^{N_1e_1,N_2e_2}(T)\;\! x_1\,\big|\,y_2\big)_{N_2e_2}e_2\Big) =
\r_2\Big(y_2^{\;\! *}\big(\mathcal{L}^{N_1e_1,N_2e_2}(T)\;\! x_1\big)\! \Big)$
\smallskip

\noindent\hspace{7 mm}$=\;\, \r_2\big(y_2^{\;\! *}(S_ox_1)\big)
= \r_2\big(y_2^{\;\! *}S(p_{\gx_1}x_1)\big)
=\r_2\Big( y_2^{\;\! *} \big( p_{\gx_2} S(p_{\gx_1}(p_{\gx_1}x_1))\big)\! \Big)$
\smallskip

\noindent\hspace{7 mm}$=\;\, \r_2(y_2^{\;\! *}p_{\gx_2})\r_2(S_o(p_{\gx_1}x_1))
=\r_2(y_2^{\;\! *}p_{\gx_2})\r_2\big( \mathcal{L}^{N_1e_1,N_2e_2}(T)(p_{\gx_1}x_1)\big)$
\smallskip

\noindent\hspace{4.8 mm}$\overset{\text{\eqref{ro021}}}{=}\! \r_2(y_2^{\;\! *}p_{\gx_2})\big(T\circ
(\r_1(p_{\gx_1})\r_1(x_1))\circ V^*\big)$
\smallskip

\noindent\hspace{7 mm}$=\;\, \r_2(y_2^{\;\! *})\Big(\! \big(\r_2(p_{\gx_2})\circ
T\circ\r_1(p_{\gx_1})\big)\circ\r_1(x_1)\circ V^*\Big)$
\smallskip

\noindent\hspace{4.8 mm}$\overset{\text{\eqref{ro021}}}{=}\! \r_2\Big(y_2^{\;\! *}\mathcal{L}^{N_1e_1,N_2e_2}
\big(\r_2(p_{\gx_2})\circ T\circ\r_1(p_{\gx_1})\big)x_1\Big)$
\smallskip

\noindent\hspace{7 mm}$=\;\, \r_2 \Big(\! \big(\mathcal{L}^{N_1e_1,N_2e_2}\big(\r_2(p_{\gx_2})\circ
T\circ\r_1(p_{\gx_1})\big) x_1\,\big|\,y_2\big)_{N_2e_2}e_2\Big)\;\! .$
\medskip

According to Theorem \ref{atdu}, $\mathcal{L}^{N_1e_1,N_2e_2}$ is injective, thus \eqref{stima2} yields
$$
T=\r_2(p_{\gx_2})\circ T\circ\r_1(p_{\gx_1}\in\r_2(p_{\gx_2})\circ(N_1,\pi_1;N_2,\pi_2)\circ\r_1(p_{\gx_1})\,.
$$
Therefore,
\begin{align*}
\r_2(Sx_1)=\;&\r_2(S_ox_1)=\r_2\big(\mathcal{L}^{N_1e_1,N_2e_2}(T)x_1\big)
=T\circ\r_1(x_1)\circ V^* \\
=\;&\r_2\big(\mathcal{L}^{\gx_1,\gx_2}(T)x_1\big)
\end{align*}
for every $x_1\in\gx_1$, that is $S=\mathcal{L}^{\gx_1,\gx_2}(T)$.

Concerning \eqref{ro22}, we have just to note that $\|S_o\|=\|S\|$, hence
$$
\|T\|\overset{\text{\eqref{ro21}}}{=}\big\|\mathcal{L}^{N_1e_1,N_2e_2}(T)\big\|=\|S_o\|=\|S\|\,.
$$
\end{proof}

In the case $N_1=N=N_2$ we can reformulate Corollary \ref{mdol} in the realm of $N$,
without using the spatial representations $\r_1$ and $\r_2\;\!$.

\begin{cor}
\label{mdol2}
Let $N$ be a type $\ty{I}$ $W^*$-algebra, $Z :=Z(N)$ its centre, and, for $j=1\;\! ,2\;\!$, $e_j\in N$
abelian projections of central support $z_N(e_j)=\idd_N$, $v\in N$ some partial isometry satisfying
$v^*v=e_1\;\! ,vv^*=e_2\;\!$, and $p_j\in N$ projections.

With $\gx_j:=p_j Ne_j\;\!$, we have
\begin{equation*}
\cb_Z(\gx_1,\gx_2)=
\big\{ L^{\gx_1,\gx_2}(a)\,;\,a\in p_2 Np_1\big\}\;\! ,
\end{equation*}
where $L^{\gx_1,\gx_2}(a) : \gx_1\to \gx_2$ is defined by
\begin{equation*}
L^{\gx_1,\gx_2}(a)\;\! x_1 :=ax_1v^*\;\! ,\quad x_1\in\gx_1\;\! ,
\end{equation*}
and
\begin{equation*}
\big\| L^{\gx_1,\gx_2}(a)\big\|=
\|a\|\;\! ,\quad a\in p_2 Np_1\;\! .
\end{equation*}
Consequently $L^{\gx_1,\gx_2} : p_2 Np_1 \longrightarrow \cb_Z(\gx_1,\gx_2)$ is an isometric
$Z$-linear

\noindent isomorphism.
Moreover,
\begin{equation*}
L^{\gx_1,\gx_2}(a)^*= L^{\gx_2,\gx_1}(a^*)\,,\quad
a\in p_2 Np_1\;\! ,
\end{equation*}
where $L^{\gx_2,\gx_1}(a^*) : \gx_2\to \gx_1$ is defined by
\begin{equation*}
L^{\gx_2,\gx_1}(a^*)\;\! x_2:=a^*x_2\;\! v\;\! ,\qquad x_2\in \gx_2\;\! .
\end{equation*}
\end{cor}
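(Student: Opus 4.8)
The plan is to obtain this statement by specialising Corollary \ref{mdol} to the situation where all the data live inside $N$ itself. First I would fix a spatial representation $(\r,H)$ of $N$ with $\r(N)'=\r(Z(N))$; such a representation exists for every type $\ty{I}$ $W^*$-algebra (reduce to the homogeneous case, realise a homogeneous $N\cong Z\ots\cb(K)$ on $L^2(Z)\ots K$ with $Z$ acting as a MASA, and take the direct sum over the homogeneous components — this is precisely the kind of representation postulated before Theorem \ref{atdu}). Then I would invoke Corollary \ref{mdol} with $N_1=N_2=N$, with $\pi_1=\pi_2$ the identity identification $Z=Z(N)$, with $\r_1=\r_2=\r$, and with $V:=\r(v)$. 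The hypotheses hold: $v\in N$ commutes with $Z(N)$, so $V\in(N,\id;N,\id)$ and $V\r(z)=\r(z)V$ for $z\in Z$, which is \eqref{V-intertw.}; and $V^*V=\r(v^*v)=\r(e_1)$, $VV^*=\r(vv^*)=\r(e_2)$, which is \eqref{def.V}.

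Next I would translate the objects of Corollary \ref{mdol} into the present notation. By the bicommutant theorem and the choice of $\r$, formula \eqref{bikkk} (with both indices equal) reads $(N,\id;N,\id)=\r(Z(N))'=\r(N)$, whence $\r(p_2)\circ(N,\id;N,\id)\circ\r(p_1)=\r\big(p_2Np_1\big)$, and by injectivity of $\r$ every element of this set is $\r(a)$ for a unique $a\in p_2Np_1$. Concerning the submodule projections, $p_jNe_j=\{x\in N;\,x=p_jxe_j\}$ is $s$-closed — the map $x\mapsto x-p_jxe_j$ being $s$-continuous — and is a $Z$-submodule of $Ne_j$, so by the uniqueness clause in Theorem \ref{nopr1}, (i), the projection $p_{\gx_j}$ with $\gx_j=p_{\gx_j}Ne_j$ is precisely $p_j$.

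With these identifications Corollary \ref{mdol} already yields $\cb_Z(\gx_1,\gx_2)=\big\{\mathcal{L}^{\gx_1,\gx_2}(\r(a));\,a\in p_2Np_1\big\}$, the norm identity $\|\mathcal{L}^{\gx_1,\gx_2}(\r(a))\|=\|\r(a)\|=\|a\|$ coming from \eqref{ro22}, and the $*$-identity \eqref{dupou0}. It then remains only to unwind the defining equations. For $a\in p_2Np_1$ and $x_1\in\gx_1$, relation \eqref{ro022} gives $\r\big(\mathcal{L}^{\gx_1,\gx_2}(\r(a))x_1\big)=\r(a)\circ\r(x_1)\circ\r(v)^*=\r(ax_1v^*)$, so $\mathcal{L}^{\gx_1,\gx_2}(\r(a))x_1=ax_1v^*$; here $ax_1v^*=p_2(ax_1v^*)\in p_2Ne_2=\gx_2$ because $x_1=x_1e_1$ and $v^*=e_1v^*e_2$. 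Thus $\mathcal{L}^{\gx_1,\gx_2}(\r(a))$ is exactly the map $L^{\gx_1,\gx_2}(a)\colon x_1\mapsto ax_1v^*$ of the statement, and $L^{\gx_1,\gx_2}$ is an isometric, hence injective, $Z$-linear bijection onto $\cb_Z(\gx_1,\gx_2)$. Similarly, \eqref{ro023} applied to $\r(a^*)\in\r(p_1Np_2)$ gives $\mathcal{L}^{\gx_2,\gx_1}(\r(a^*))x_2=a^*x_2v$, and \eqref{dupou0} then reads $L^{\gx_1,\gx_2}(a)^*=\mathcal{L}^{\gx_2,\gx_1}(\r(a)^*)=\mathcal{L}^{\gx_2,\gx_1}(\r(a^*))=L^{\gx_2,\gx_1}(a^*)$.

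The argument is essentially a dictionary translation of Corollary \ref{mdol}, so no step is genuinely hard; the two points that require care — though both are routine — are the identification $(N,\id;N,\id)=\r(N)$, which rests on the existence of a spatial representation of $N$ with commutant equal to the centre (a feature of type $\ty{I}$), and the verification that $p_{\gx_j}=p_j$ via the $s$-closedness of $p_jNe_j$ together with the uniqueness in Theorem \ref{nopr1}, (i).
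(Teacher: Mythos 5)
Your argument is correct and follows essentially the same route as the paper's own proof: one fixes a spatial representation $\r$ of $N$ with $\r(N)'=\r(Z)$, applies Corollary \ref{mdol} with $N_1=N_2=N$, $\pi_1=\pi_2$ the identity embedding, $V=\r(v)$, identifies $(N,\id;N,\id)=\r(Z)'=\r(N)$, and unwinds \eqref{ro022}, \eqref{ro023}, \eqref{ro22} and \eqref{dupou0}. Your extra care in checking $p_{\gx_j}=p_j$ via the $s$-closedness of $p_jNe_j$ and the uniqueness clause of Theorem \ref{nopr1}, (i) is a detail the paper leaves implicit, but it changes nothing of substance.
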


\begin{proof}
Let us consider a spatial representation $(\r\;\! ,H)$
of $N$ such that $\r (N)'=\r (Z)\;\!$. Set, for $j=1\;\! ,2\;\!$, $N_j:=N$, $\pi_j:=\iota$ with
$\iota:Z\to N$ the natural embedding of $Z$ in $N$, and $(\r_j\;\! ,H_j) := (\r\;\! ,H)\;\!$.

For any $j\;\! ,k=1\;\! ,2\;\!$, we have $(N_j,\pi_j;N_k,\pi_k)=\r (Z)'=\r (N)\;\!$. Thus
$V:=\r (v)$ satisfies (\ref{def.V}) and (\ref{V-intertw.}).

Since

\centerline{$\r_2(p_{\gx_2})\circ(N_1,\pi_1;N_2,\pi_2)\circ\r_1(p_{\gx_1}) =\r (p_2 Np_1)\;\! ,$}
\smallskip

\noindent we have for every $a\in p_2 Np_1$ and $x_1\in\gx_1\;\!$,
\smallskip

\centerline{$\r\big(\mathcal{L}^{\gx_1,\gx_2}(\r (a))\;\! x_1\big)\!\! \overset{(\ref{ro022})}{=}\!\!
\r (a)\circ \r(x_1)\circ \r (v)^*\! =\! \r (a x_1 v^*)\! =\! \r \big( L^{\gx_1,\gx_2}(a)\big) x_1\;\! .$}
\smallskip

\noindent Therefore $\mathcal{L}^{\gx_1,\gx_2}(\r (a))=L^{\gx_1,\gx_2}(a)$ for all
$a\in p_2 Np_1\;\!$.

Similarly,
\smallskip

\centerline{$\r_1(p_{\gx_1})\circ(N_2,\pi_2;N_1,\pi_1)\circ\r_2(p_{\gx_2}) =\r (p_1 Np_2)\;\! ,$}
\medskip

\noindent and $\mathcal{L}^{\gx_2,\gx_1}(\r (b))=L^{\gx_2,\gx_1}(b)$ for all $b\in p_1Np_2\;\!$.

Applying now Corollary \ref{mdol}, all desired statements follow immediately.

\smallskip

\end{proof}



\section{Reduction theory by Hilbert module representations}
\label{sec5}

Let now $M$ be an arbitrary $W^*$-algebra. A {\it decomposition of $M$ along a $W^*$-subalgebra
$\idd_M\in Z\subset Z(M)$, in the realm of Hilbert module representations}, means an embedding
of $M$ into a type $\ty{I}$ $W^*$-algebra $N$ such that $Z=Z(N)$, that is a (necessarily unital) faithful,
normal $*$-homomorphism $\iota : M\!\longrightarrow N$ satisfying $\iota(Z)=Z(N)$. Starting with a
spatial representation $\pi$ of $M$ on a Hilbert space $H$, we get such an embedding by setting
$N=\pi (Z)'$, and $\iota=\pi$ considered as a map $M\!\longrightarrow N$.
Actually, any decomposition $\iota : M\!\longrightarrow N$ along $Z\subset Z(M)$ can be obtained in this way:

\begin{prop}\label{embed}
Let $M$ be a $W^*$-algebra, $\idd_M\in Z\subset Z(M)$ a $W^*$-subalgebra of $Z(M)$, $N$ a type
$\ty{I}$ $W^*$-algebra, and $\iota : M\!\longrightarrow\! N$ a faithful, normal $*$-homomorphism
satisfying the condition $\iota(Z)=Z(N)$.
Then there exist a spatial representation $\pi : M\!\longrightarrow \cb(H)\!$ and a
$*$-isomorphism $\a : N\!\longrightarrow \pi (Z)'$ such that $\pi=\a\circ\iota\;\!$.
Moreover, $\pi$ is uniquely determined by $\iota$, up to unitary equivalence.
\end{prop}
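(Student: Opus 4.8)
The plan is to realise the abstract embedding $\iota$ concretely by composing it with an appropriate spatial representation of the type $\ty{I}$ $W^*$-algebra $N$ itself, and to read off the uniqueness of $\pi$ from the uniqueness of that representation of $N$. The one non-formal ingredient is the following structural fact: a type $\ty{I}$ $W^*$-algebra $N$ admits a spatial representation $\sigma:N\longrightarrow\cb(H)$ with $\sigma(N)'=\sigma(Z(N))$, and any two such are unitarily equivalent. Existence follows from the structure theory of type $\ty{I}$ algebras: decompose $N$ into its homogeneous components $N=\bigoplus_n N_n$, write $N_n\cong Z(N_n)\ots\cb(K_n)$, realise each abelian $Z(N_n)$ as a MASA on a Hilbert space $L_n$, and form the direct sum of the resulting representations on $\bigoplus_n L_n\otimes K_n$. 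Uniqueness reduces, via the same homogeneous decomposition, to the unicity up to unitary equivalence of a realisation of an abelian $W^*$-algebra as a MASA, which is precisely where the spatial-implementation results of \cite{SZ1} already invoked in the proof of Theorem \ref{atdu} enter.

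Granting this, existence is immediate: fix $\sigma:N\longrightarrow\cb(H)$ with $\sigma(N)'=\sigma(Z(N))$ and put $\pi:=\sigma\circ\iota:M\longrightarrow\cb(H)$. Then $\pi$ is unital ($\pi(\idd_M)=\sigma(\idd_N)=\idd_H$), faithful and normal, hence a spatial representation of $M$. Taking commutants in $\sigma(N)'=\sigma(Z(N))$ gives $\sigma(N)=\sigma(Z(N))'$, so, using $\iota(Z)=Z(N)$,
\[
\pi(Z)'=\sigma\big(\iota(Z)\big)'=\sigma\big(Z(N)\big)'=\sigma(N)\,.
\]
Hence $\alpha:=\sigma$, regarded as a map $N\longrightarrow\sigma(N)=\pi(Z)'$, is a $*$-isomorphism with $\alpha\circ\iota=\pi$.

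For the uniqueness, let $\pi':M\longrightarrow\cb(H')$ be any spatial representation for which there is a $*$-isomorphism $\alpha':N\longrightarrow\pi'(Z)'$ with $\pi'=\alpha'\circ\iota$. Viewing $\alpha'$ as a faithful normal representation of $N$ on $H'$ with image $\pi'(Z)'$, its commutant is $\big(\pi'(Z)'\big)'=\pi'(Z)''=\pi'(Z)$ (a von Neumann algebra, since $\pi'$ is normal), while $\alpha'\big(Z(N)\big)=\alpha'\big(\iota(Z)\big)=\pi'(Z)$; thus $\alpha'$ is a representation of $N$ whose commutant is the image of $Z(N)$. The same is true of $\alpha=\sigma$ by construction, so the unicity statement of the first paragraph yields a unitary $W:H'\longrightarrow H$ with $W\alpha'(n)W^*=\sigma(n)$ for all $n\in N$. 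Then for every $m\in M$,
\[
W\pi'(m)W^*=W\alpha'\big(\iota(m)\big)W^*=\sigma\big(\iota(m)\big)=\pi(m)\,,
\]
so $\pi'$ is unitarily equivalent to $\pi$, which is the asserted uniqueness.

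Everything except the structural fact recalled in the first paragraph is a mechanical manipulation of commutants and compositions; I expect the only (modest) obstacle to be pinning down the uniqueness, up to unitary equivalence, of a MASA realisation of a general, possibly non-separable, abelian $W^*$-algebra, a point that is handled by the results of \cite{SZ1}.
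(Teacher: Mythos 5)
Your proof is correct and takes essentially the same route as the paper's: the paper also obtains $\pi$ by composing $\iota$ with a faithful normal representation $\pi_0$ of $N$ satisfying $\pi_0(N)'=\pi_0(Z(N))$ (citing \cite{SZ1}, Corollary 6.5, where you instead sketch the construction via the homogeneous decomposition), and it derives uniqueness from the fact that a $*$-isomorphism between von Neumann algebras with abelian (type $\ty{I}_1$ homogeneous) commutants is spatial (\cite{SZ1}, Theorem 8.7), which is exactly the unicity statement you invoke. The only substantive point to keep in mind is that your "structural fact" in the first paragraph is precisely this cited material, not something more elementary.
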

\begin{proof}
Let $\pi_0:N\to \cb(H)$ be a faithful, normal $*$-representation such that
$$
\pi_0(N)'=\pi_0(Z(N))=\pi_0(\iota(Z))
$$
(see e.g. \cite{SZ1}, Corollary 6.5). Putting $\pi:=\pi_0\circ\iota$ and $\a=\pi_0\;\!$, considered as a map
$N\longrightarrow \pi_0(N)\;\!$, we are done.

Let now $\rho : M\!\longrightarrow \cb(K)$ be another spatial representation such that, for some
$*$-isomorphism $\beta : N\longrightarrow \rho (Z)'$ we have $\rho=\beta\circ\iota\;\!$
Since the commutants of both $\pi (Z)'$ and $\rho (Z)'$ are type $\ty{I}_1$ (i.e. abelian)
homogeneous von Neumann algebras, $\beta\circ\a^{-1} : \pi (Z)'\longrightarrow\rho (Z)'$
is a spatial $*$-isomorphism (see e.g. \cite{SZ1}, Theorem 8.7).
Consequently $\rho =(\beta\circ\a^{-1})\circ\pi$ is unitarily equivalent with $\pi\;\!$.

\end{proof}

There are two extreme cases. In the case $Z=\bc\idd_M$, a decomposition of $M$ along $Z$ means
nothing but a unital embedding of $M$ into a type $\ty{I}$ factor, that is a spatial representation of $M$.
On the other hand, in the case $Z=Z(M)$, a decomposition of $M$ along $Z$ is customarily considered
a reduction theory for $M$.

If $M\subset \cb(H)$ is a von Neumann algebra and $\idd_H\in Z\subset Z(M)$ a von Neumann subalgebra,
the inclusion $M\subset N:=Z'$ is a decomposition of $M$ along $Z$. Since 
$Z\subset Z(M')$, we obtain simultaneously also a decomposition $M'\subset N$ of $M'$ along $Z$.
Moreover, noticing that $M'=M'\cap N$, if $e$ is an abelian projection of central support 
$z_N(e)=\idd_H$, by Theorem \ref{isotr} we have
\begin{equation*}
L^{Ne}(M)'=L^{Ne}(M')\, .
\end{equation*}

Thus we conclude that the Hilbert module representation $L^{Ne}$ is compatible with the passage to the
commutant for the von Neumann algebras on $H$ containing $Z$.  

Next we show that the map $L^{Ne}$ does not depend essentially on the choice of $e\;\!$. We recall that
two abelian projections with equal central support are equivalent (e.g. \cite{SZ1}, Proposition 4.10).

\begin{prop}
\label{elleem}
Let $N$ be a type $\ty{I}$ $W^*$-algebra with centre $Z$, $e,f\in N$ abelian projections of central support $z_N(e)=z_N(f)=\idd_N$, and $v\in N$ a partial isometry realising the equivalence
$$
e\sim f:\quad v^*v=e\,, vv^*=f\,.
$$
Then the map
$$
U_v: Nf\ni y\mapsto yv\in Ne
$$
is $Z$-unitary with inverse
$$
U_{v^*}:Ne\ni x\mapsto xv^*\in Nf\,.
$$
Moreover, $U_v$ intertwines $L^{Ne}$ with $L^{Nf}$:
$$
U_vL^{Nf}(a)=L^{Ne}(a)U_v\,,\quad a\in N\,.
$$
\end{prop}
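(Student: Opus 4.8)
The plan is to verify the claims by direct computation; the only step with genuine content is the identification of the adjoint of $U_v$.

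First I would record the elementary identities following from $v^*v=e$ and $vv^*=f$, namely $ve=v=fv$ and $ev^*=v^*=v^*f$. From these it is immediate that $U_v$ maps $Nf$ into $Ne$ (for $y\in Nf$ one has $(yv)e=y(ve)=yv$, so $yv\in Ne$), that $U_{v^*}$ maps $Ne$ into $Nf$, that both maps are $Z$-linear and bounded with norm at most $1$ (using $\|v\|\le 1$ and $\|\cdot\|_{Ne}=\|\cdot\|$ from \eqref{op-norm}), and that they are mutually inverse: $U_{v^*}U_v\,y=yvv^*=yf=y$ for $y\in Nf$, and $U_vU_{v^*}\,x=xv^*v=xe=x$ for $x\in Ne$.

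The heart of the proof is to show that $U_v$ is adjointable with $U_v^*=U_{v^*}$, i.e. that
$$(U_vy\mid x)_{Ne}=(y\mid U_{v^*}x)_{Nf},\qquad y\in Nf,\ x\in Ne.$$
By the definition of the $Z$-valued inner products this amounts to the identity $\F_e(x^*yv)=\F_f(vx^*y)$. To prove it, put $w:=x^*y$; since $x=xe$ and $y=yf$ we have $ew=w=wf$, and together with $ve=v=fv$ this gives $e(wv)e=wv$ and $f(vw)f=vw$, hence $\F_e(wv)\,e=wv$ and $\F_f(vw)\,f=vw$ by the defining property of the conditional expectations. Conjugating the first relation and using that $\F_e(wv)$ is central, that $vev^*=vv^*=f$, and that $w(vv^*)=w$, we obtain
$$\F_e(wv)\,f=v\big(\F_e(wv)\,e\big)v^*=v(wv)v^*=vw=\F_f(vw)\,f.$$
Since $z_N(f)=\idd_N$, the map $Z\ni z\mapsto zf\in Zf$ is a $*$-isomorphism, in particular injective, whence $\F_e(wv)=\F_f(vw)$, as required. (Alternatively, adjointability of $U_v$ is already granted by Theorem \ref{mmainn}, (ii), applied with $N_1=N_2=N$, $e_1=f$, $e_2=e$, $\pi_j=\id_Z$ and the $s$-closed submodules $Nf$ and $Ne$; the computation above then merely identifies the adjoint.)

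Combining $U_v^*=U_{v^*}$ with the fact that $U_v$ and $U_{v^*}$ are mutually inverse yields $U_v^*U_v=\id_{Nf}$ and $U_vU_v^*=\id_{Ne}$, that is, $U_v$ is $Z$-unitary. Finally, the intertwining relation is immediate from associativity of the product in $N$: for $a\in N$ and $y\in Nf$ one has $ay\in Nf$, and both $U_v\big(L^{Nf}(a)\,y\big)$ and $L^{Ne}(a)\big(U_v\,y\big)$ equal $ayv$. Thus the only non-routine point is the displayed identity $\F_e(x^*yv)=\F_f(vx^*y)$; everything else is bookkeeping with the partial-isometry relations $ve=v=fv$ and $ev^*=v^*=v^*f$.
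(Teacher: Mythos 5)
Your proof is correct and follows essentially the same route as the paper's: the paper likewise verifies that $U_{v^*}$ is the inverse and identifies the adjoint by showing that $z=\F_e(x^*yv)$ satisfies $zf=v(ze)v^*=vx^*y=(xv^*)^*y$, which is exactly your conjugation argument for $\F_e(wv)=\F_f(vw)$ (with the injectivity of $z\mapsto zf$ used implicitly via the uniqueness in the definition of $\F_f$). The intertwining relation is handled identically in both.
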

\begin{proof}
Clearly,
$$
Ne\ni x\mapsto xv^*\in Nf	\, .
$$
is the inverse of $U_v$.

Now, for every $x\in Ne$ and $y\in Nf$,
$$
z:=(U_vy\!\mid\!x)_{Ne}=\F_e(x^*yv)
$$
is the unique element of $Z$ verifying
\begin{align*}
ze=&x^*yv=v^*vx^*yv
\iff zf=v(ze)v^*\\
=&vv^*vx^*yvv^*
=vx^*y=(xv^*)^*y\,,
\end{align*}
hence
$$
z=\F_f\big((xv^*)^*y\big)=(y\!\mid\!xv^*)_{\substack{ {} \\ Nf}}=\big(y\!\mid\!(U_v)^{-1}x\big)_{\substack{ {} \\ Nf}}\,.
$$
Consequently, $U_v$ is adjointable with adjoint $(U_v)^*=(U_v)^{-1}=U_{v^*}$.

Furthermore, for every $a\in N$ and $y\in Nf$, 
$$
U_vL^{Nf}(a)y=U_v(ay)=ayv=L^{Ne}(a)(yv)=L^{Ne}(a)U_vy\,.
$$
Thus $U_vL^{Nf}(a)=L^{Ne}(a)U_v$.

\end{proof}


\begin{rem}
With the notations of Corollary \ref{mdol2},
we note that 
$$
p_1\sim p_2 \iff \exists \,\,U\in\cb_Z(\gx_1,\gx_2)\,, Z\text{-unitary}\,.
$$

If this is the case, $U\gx_1=\gx_2$ and $\cb_Z(\gx_2)=U\cb_Z(\gx_1)U^*$.
\end{rem}
\begin{proof}
If $p_1=u^*u$, $p_2=uu^*$ for some $u\in p_2Np_1\subset N$, then $L^{\gx_1,\gx_2}(u)
=:U\in \cb_Z(\gx_1,\gx_2)$ is a $Z$-unitary.

Conversely, suppose that there exist a $Z$-unitary in $\cb_Z(\gx_1,\gx_2)$. By Corollary \ref{mdol2},
there exist $u\in p_2Np_1\subset N$ such that $U=L^{\gx_1,\gx_2}(u)$. We also note that $u^*u$ is
in the reduced algebra $p_1Np_1$.

Since $U^*U=\id_{\gx_1}$, for $x_1,y_1\in\gx_1$ we get
\begin{align*}
(x_1\!\mid\!y_1)_{\gx_1}=&\big(U^*Ux_1\!\mid\!y_1\big)_{\gx_1}=\big(Ux_1\!\mid\!Uy_1\big)_{\gx_2}\\
=&\big(L^{\gx_1,\gx_2}(u)x_1\!\mid\!L^{\gx_1,\gx_2}(u)y_1\big)_{\gx_2}\\
=&(ux_1v^*\!\mid\!uy_1v^*)_{\gx_2}\,.
\end{align*}
On the other hand,
\begin{align*}
(ux_1v^*\!\mid uy_1v^*)_{\gx_2}e_2=\, &vy_1^*u^*ux_1v^*
= v\big((ux_1\!\mid\!uy_1)_{\gx_1}e_1\big)v^* \\
=\, &(ux_1\!\mid\!uy_1)_{\gx_1}e_2
\end{align*}
and, consequently, 
\begin{equation}
\label{yiy}
y_1^*x_1=(x_1\!\mid\!y_1)_{\gx_1}e_1=(ux_1\!\mid\!uy_1)_{\gx_1}e_1=y_1^*u^*ux_1
\end{equation}
for each $x_1$ and $y_1$ in $\gx_1$.
Equation \eqref{yiy} yields
$$
\big(L^{\gx_1}(p_1)x_1\!\mid\!y_1\big)_{\gx_1}=(x_1\!\mid\!y_1)_{\gx_1}=
\big(L^{\gx_1}(u^*u)x_1\!\mid\!y_1\big)_{\gx_1}
$$ 
and, by Corollary \ref{coun}, we conclude that $u^*u=p_1$.

The equality $uu^*=p_2$ is proved in a similar way.
 
\end{proof}

Let us briefly explain, why we call a faithful, normal $*$-homomorphism $\iota$ from a
$W^*$-algebra $M$ in a type $\ty{I}$ $W^*$-algebra $N$, mapping a $W^*$-subalgebra
$\idd_M\in Z\subset Z(M)$ onto the centre $Z(N)$ of $N$, decomposition of $M$ (along 
$Z$, in the realm of Hilbert module representations).

Without loss of generality we can consider $M$ a $W^*$-subalgebra of $N$, and $\iota$
the inclusion map $M\subset N$. Thus we are dealing with a $\ty{I}$ $W^*$-algebra $N$
and a $W^*$-subalgebra $M\subset N$ containing the centre $Z$ of $N$.

If $e\in N$ is any abelian projection of central support $z_N(e)=\idd_N$, then, according to
Theorem \ref{isotr}, $L^{Ne} : M\ni a\longmapsto L^{Ne} (a)\in \cb_Z(Ne)$ is a unital, faithful,
normal $*$-homomorphism, mapping $Z$ onto the centre of $\cb_Z(Ne)$
(which consists of the left multiplication operators on $Ne$ by the elements of $Z=Z(N)$).
Furthermore, by Proposition \ref{elleem}, it is uniquely determined up to $Z$-unitary equivalence.

Now, $\cb_Z(N e)$ can be considered a field of bounded linear operators on a field of Hilbert spaces,
labeled by the Gelfand spectrum $\Omega$ of $Z\;\!$, that is the set of all characters of $Z$, equipped
with the weak${}^*$-topology.

Let us begin with the representation of the Hilbert module $Ne$ in terms of a field of Hilbert
spaces labeled by the elements of $\Omega\;\!$.

For each $\omega\in\Omega\;\!$,
\smallskip

\centerline{$(Ne)\times (Ne)\ni (x\;\! ,y)\longmapsto 
\omega\big( (x\!\mid\!x)_{\substack{ {} \\ Ne}}\big)\in \bc$}
\smallskip

\noindent is a pre-inner product. Let $H_{e,\omega}$ denote the Hilbert space
obtained by the Hausdorff completion of $Ne\;\!$. More precisely, setting
\smallskip

\centerline{$(Ne)_{\omega}:=
\{ x\in Ne\;\! ;\;\! \omega\big( (x\!\mid\!x)_{\substack{ {} \\ Ne}}\big) =0\}$}
\smallskip

\noindent and denoting by $x_{\omega}$ the image of $x\in Ne$ by the canonical mapping

\noindent $Ne\longrightarrow Ne / (Ne)_{\omega}\;\!$, the formula
\begin{equation}\label{inner-product}
( x_{\omega} \!\mid\! y_{\omega})_{\omega}:=\omega\big( (x\!\mid\!y)_{\substack{ {} \\ Ne}}
\big) =\omega\big(\Phi_e(y^*x)\big)\;\! ,\quad x\;\! , y\in Ne\;\! ,
\end{equation}
defines an inner product on $Ne / (Ne)_{\omega}$ and $H_{e,\omega}$ is the Hilbert
space obtained by the completion of $Ne / (Ne)_{\omega}$ with respect to the inner
product (\ref{inner-product}).

Let us consider $\widetilde{H}_e:=\{ (\omega\;\! ,\xi )\;\! ; \omega\in\Omega\;\! ,\xi\in H_{e,\omega}\}\;\!$,
the disjoint union of the Hilbert spaces $H_{e,\omega}\;\! ,\omega\in\Omega\;\!$.
It is a bundle with base $\Omega\;\!$, projection map
$P_e : \widetilde{H}_e\ni (\omega\;\! ,\xi )\longmapsto \omega\in\Omega\;\!$, and fibres
$\{H_{e,\om}\;\! ; \om\in\Om\}\;\!$.

A {\it section} of $P_e$ is a map $s : \Omega\longrightarrow \widetilde{H}_e$ such that
$P_e\big(s(\omega )\big) =\omega$ for all $\omega\in \Omega\;\!$. That means that
each $s(\omega )$ is of the form $s(\omega )=(\omega\;\! ,\xi (\omega))$ with $\xi (\omega )
\in H_{e,\omega}\;\!$. It is called {\it bounded} whenever
$\sup\limits_{\omega\in \Omega}\| \xi (\omega )\|<+\infty\;\!$.

The next result shows that we can identify the Hilbert module $Ne$ with the appropriately continuous sections
of this bundle.

\begin{prop}\label{synth.vect}
For $x\in Ne\;\!$, the map
$\Omega\ni\omega\longmapsto (\omega\;\! ,x_{\omega})\in\widetilde{H}_e$ is a bounded
section of $P_e$ such that all functions
$\Omega\ni\omega\longmapsto (x_{\omega} \!\mid\! y_{\omega})_{\omega}\;\! ,y\in Ne\;\!$,
are continuous. Moreover, 
\begin{equation}\label{fibre-norm}
\| x\| =\sup\limits_{\omega\in D} \| x_{\omega}\|\;\! ,\quad x\in Ne\;\! , \,\,D\subset \Omega\text{ any dense subset}.
\end{equation}

Conversely, if $s : \Omega\ni\omega\longmapsto (\omega\;\! ,\xi (\omega))\in\widetilde{H}_e$
is any bounded section of $P_e$ such that all functions
$\Omega\ni\omega\longmapsto (\xi (\omega) \!\mid\! y_{\omega})_{\omega}\;\! ,y\in Ne\;\!$,
are continuous, then there exists a unique $x\in Ne$ such that
$s(\omega )=(\omega\;\! ,x_{\omega})\;\! ,\omega\in\Omega\;\!$.
\end{prop}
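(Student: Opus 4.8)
The statement has two halves — the ``direct'' assertions about the section attached to an element $x\in Ne$, and the ``converse'' realizing an abstract section as such an $x$ — and the plan is to handle the first via the Gelfand transform on $Z\equiv C(\Om)$ and the second via self-duality of the Hilbert module $Ne$, in that order (the norm identity of the first half is needed in the second).

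\emph{Direct part.} Fix $x\in Ne$ and put $a:=(x\!\mid\!x)_{\substack{ {} \\ Ne}}=\F_e(x^*x)\in Z^+$. By \eqref{inner-product} one has $\|x_\om\|_\om^2=\om(a)$ for every $\om\in\Om$, so $\om\mapsto\|x_\om\|_\om^2$ is precisely the Gelfand transform $\widehat a\in C(\Om)$. Hence the section $\om\mapsto(\om,x_\om)$ is bounded; and since $\widehat a\ge0$ is continuous, for every dense $D\subset\Om$ we get $\sup_{\om\in D}\|x_\om\|_\om^2=\|\widehat a\|_\infty=\|a\|=\|x\|^2$, the last equality being \eqref{op-norm}; this is \eqref{fibre-norm}. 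Likewise, for $y\in Ne$ the function $\om\mapsto(x_\om\!\mid\!y_\om)_\om=\om\big(\F_e(y^*x)\big)$ is the Gelfand transform of $\F_e(y^*x)\in Z$, hence continuous. This part is entirely routine.

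\emph{Converse.} Given a bounded section $s:\om\mapsto(\om,\xi(\om))$ such that $\om\mapsto(\xi(\om)\!\mid\!y_\om)_\om$ is continuous for every $y\in Ne$, the plan is to manufacture a bounded $Z$-antilinear functional $\varphi:Ne\to Z\equiv C(\Om)$ by letting $\varphi(y)$ be that continuous function $\om\mapsto(\xi(\om)\!\mid\!y_\om)_\om$. I would then verify: $\varphi$ is additive and conjugate-homogeneous, since the canonical map $Ne\to H_{e,\om}$ is linear; $\varphi(zy)=\varphi(y)z^*$ for $z\in Z$, using that $(zy)_\om=\om(z)\,y_\om$ (a consequence of $\F_e$ being a $Z$-bimodule map together with centrality of $z$) and that $\om(z^*)=\overline{\om(z)}$; and, by the Schwarz inequality in each $H_{e,\om}$ together with the norm identity just proved,
\[
\|\varphi(y)\|=\sup_{\om\in\Om}\big|(\xi(\om)\!\mid\!y_\om)_\om\big|\le\Big(\sup_{\om\in\Om}\|\xi(\om)\|_\om\Big)\sup_{\om\in\Om}\|y_\om\|_\om=\Big(\sup_{\om\in\Om}\|\xi(\om)\|_\om\Big)\,\|y\|\,,
\]
so that $\varphi$ is bounded. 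Since $Ne$ is a self-dual Hilbert $Z$-module, there is then a unique $x\in Ne$ with $\varphi(y)=(x\!\mid\!y)_{\substack{ {} \\ Ne}}$ for all $y$. Applying an arbitrary character $\om$ gives $(x_\om\!\mid\!y_\om)_\om=\om(\varphi(y))=(\xi(\om)\!\mid\!y_\om)_\om$ for every $y\in Ne$, and since $\{y_\om\,;\,y\in Ne\}$ is dense in $H_{e,\om}$ by construction of the completion, this forces $x_\om=\xi(\om)$, i.e. $s(\om)=(\om,x_\om)$. Uniqueness is immediate: a second such $x'$ satisfies $\om\big((x-x'\!\mid\!x-x')_{\substack{ {} \\ Ne}}\big)=\|x_\om-x'_\om\|_\om^2=0$ for every $\om$, whence $(x-x'\!\mid\!x-x')_{\substack{ {} \\ Ne}}=0$ and $x=x'$.

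The main obstacle I expect is the boundedness of $\varphi$ in the converse: it is exactly there that the ``supremum of the fibre norms equals the module norm'' formula from the direct part is indispensable, which dictates the order of the two arguments. A secondary point is to invoke self-duality in its antilinear form $Ty=(x_T\!\mid\!y)_{\substack{ {} \\ Ne}}$; should one prefer not to cite it, the same $x$ is produced by viewing $\varphi$ as a bounded $Z$-antilinear map from the $s$-closed submodule $Ne$ into the $s$-closed submodule $Ze=eNe\subset Ne$, adjointable by Theorem \ref{mmainn}(ii), so that $x=\varphi^*e$ does the job. Everything else is routine bookkeeping with characters and Gelfand transforms.
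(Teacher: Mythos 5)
Your proof is correct and follows essentially the same route as the paper's: the direct part via the Gelfand transform of $\F_e(y^*x)$ together with \eqref{op-norm}, and the converse by packaging the fibrewise inner products against $\xi(\omega)$ into a bounded $Z$-(anti)linear functional $Ne\to Z$ and representing it by an element of $Ne$ through self-duality (the paper uses the linear functional $F(y)$ with Gelfand transform $\omega\mapsto(y_{\omega}\!\mid\!\xi(\omega))_{\omega}$ and cites \cite{SZ}, Lemma 1.11 for the representation, which is the same device). Your fallback via adjointability of $\varphi:Ne\to Ze$ from Theorem \ref{mmainn} is a harmless variant of the same idea.
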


\begin{proof}
For $x\in Ne\;\!$, $\Omega\ni\omega\longmapsto (\omega\;\! ,x_{\omega})\in\widetilde{H}_e$
is clearly a section of $P_e\;\!$. Its boundedness is consequence of (\ref{fibre-norm}), which
follows by using (\ref{op-norm}):
\smallskip

\centerline{$\| x\|^2\! \overset{\rm (\ref{op-norm})}{=}\! \big\| (x\!\mid\!x)_{\substack{ {} \\ Ne}}\big\|
=\big\| \Phi_e(x^*x)\big\| =\sup\limits_{\omega\in \Omega} \omega\big(\Phi_e(x^*x)\big)
=\sup\limits_{\omega\in D} \omega\big(\Phi_e(x^*x)\big)$}

\noindent\hspace{1.2 cm}$\overset{\rm (\ref{inner-product})}{=}
\sup\limits_{\omega\in D}\;\! ( x_{\omega} \!\mid\! x_{\omega})_{\omega}
= \sup\limits_{\omega\in D} \| x_{\omega}\|^2 .$
\smallskip

\noindent On the other hand, for every $y\in Ne\;\!$,  the continuity of the function
\smallskip

\centerline{$\Omega\ni\omega\longmapsto (x_{\omega} \!\mid\! y_{\omega})_{\omega}
\overset{\rm (\ref{inner-product})}{=}\omega\big(\Phi_e(y^*x)\big)\in \bc$}
\smallskip

\noindent follows by the definition of the topology of $\Omega\;\!$.

Conversely, let $s : \Omega\ni \omega\longmapsto (\omega\;\! ,\xi (\omega ))\in\widetilde{H}_e$
be a section of $P_e$ such that $c:=\sup\limits_{\omega\in \Omega}\| \xi (\omega )\|<+\infty$ and
all functions
$\Omega\ni\omega\longmapsto (\xi (\omega) \!\mid\! y_{\omega})_{\omega}\;\! ,y\in Ne\;\!$,
are continuous. For each $y\in Ne$, we denote by $F(y)$ the element of $Z$ whose Gelfand
transform is $\Omega\ni \omega\longmapsto (y_{\omega} \!\mid\! \xi (\omega ))_{\omega} =
\overline{(\xi (\omega ) \!\mid\! y_{\omega})_{\omega}}\in \bc\;\!$.
$F : Ne\ni y\longmapsto F(y)\in Z$ is clearly a $Z$-module map.
It is bounded because, for $y\in Ne$, we have $\| F(y)\| =
\sup\limits_{\omega\in \Omega} |\omega\big( F(y)\big) | =
\sup\limits_{\omega\in \Omega} |(y_{\omega} \!\mid\! \xi (\omega ))_{\omega}|
\leq c \| y\|\;\!$.

Now, according to \cite{SZ}, Lemma 1.11, there exists a unique $x\in Ne$ such that
$F(y)=\F_e(x^*y)$ for all $y\in Ne\;\!$. Then
\smallskip

\centerline{$ (y_{\omega} \!\mid\! \xi (\omega ))_{\omega} =\omega\big( F(y)\big) =
\omega\big( \F_e(x^*y)\big) =\omega\big( (y\!\mid\! x)_{\substack{ {} \\ Ne}}\big)
=(y_{\omega} \!\mid\! x_{\omega})_{\omega}$}
\smallskip

\noindent for all $y\in Ne$ and all $\omega\in\Omega\;\!$, hence $\xi (\omega )=x_{\omega}$
for all $\omega\in\Omega\;\!$.

\end{proof}

Now we go to represent the elements of $N$, which can be identified with $\cb_Z(N e)$
according to Theorem \ref{isotr}, as fields of operators acting on the Hilbert spaces
$H_{e,\omega}\;\! ,\omega\in \Omega\;\!$.
\smallskip

For each $\om\in\Omega$ and $a\in N$ we have
\begin{equation*}
\| (ax)_{\omega}\|^2=\omega\big( \F_e(x^*a^*a\;\! x)\big)\leq \| a\|^2\omega\big( \F_e(x^*x)\big)
=\| a\|^2\| x_{\omega}\|^2,\,\, x\in Ne\,,
\end{equation*}
so $Ne / (Ne)_{\omega}\ni x_{\omega}\longmapsto (ax)_{\omega}\in Ne / (Ne)_{\omega}$ extends
to a  linear operator \\ $\pi_{e,\omega}(a) : H_{e,\omega}\longrightarrow H_{e,\omega}\;\!$.
It is easy to check that $\pi_{e,\omega} : N\longrightarrow \cb(H_{e,\omega})$ is
a unital $*$-homomorphiam.

We notice that (\ref{op.norm}) and (\ref{fibre-norm}) imply the operator norm counterpart of the
vector norm formula (\ref{fibre-norm}):
\begin{equation}\label{fibre-op.norm}
\| a\| =\sup\limits_{\omega\in D} \| \pi_{e,\omega}(a)\|\;\! ,\quad a\in N\;\! ,\, D\subset \Omega\text{ any
dense subset}\,.
\end{equation}
Indeed, inequality $\geq$ is consequence of $\|\pi_{e,\omega}\|\leq 1\;\!$. On the other hand, denoting
$c:= \sup\limits_{\omega\in D} \| \pi_{e,\omega}(a)\|\;\!$, we obtain successively

\noindent\hspace{9.1 mm}$\| (ax)_{\omega}\| = \|\pi_{e,\omega}(a)\;\! x_{\omega}\| \leq
c\;\! \| x_{\omega}\|\! \overset{\rm (\ref{fibre-norm})}{\leq}\! c\;\! \| x\|\;\! ,\quad x\in Ne\;\! ,
\omega\in \Omega\;\! ,$
\smallskip

\noindent\hspace{13.1 mm}$\| ax\| \overset{\rm (\ref{fibre-norm})}{=} \sup\limits_{\omega\in D}
\| (ax)_{\omega}\| \leq c\;\! \| x\|\;\! ,\quad x\in Ne$
\smallskip

\noindent and, according to (\ref{op.norm}), $\| a\|\leq c$ follows.
\smallskip

Clearly, for every $z\in Z$ and $\omega\in\Omega$, it holds
\begin{equation}\label{onZ}
(z x)_{\omega}=\omega (z) x_{\omega}\text{ for all }x\in Ne\Longleftrightarrow
\pi_{e,\omega}(z)=\omega (z) \idd_{H_{e,\omega}}\;\! .
\end{equation}

The operator analogous of $\widetilde{H}_e$ is
\smallskip

\centerline{$\widetilde{N}_e :=
\{ (\omega\;\! ,T)\;\! ;\omega\in\Omega\;\! , T\in \cb(H_{e,\omega})\}\;\! ,$}
\smallskip

\noindent the disjoint union of the type $\ty{I}$ factors $\cb(H_{e,\omega})\;\! ,\omega\in\Omega\;\!$.
Similarly to $\widetilde{H}_e$, it is a bundle with base space $\Omega\;\!$, projection map
$\mathcal{P}_{\! e} : \widetilde{N}_e\ni (\omega\;\! ,T )\longmapsto \omega\in\Omega\;\!$,
and fibres $\{ \cb(H_{e,\omega}) ; \omega\in\Omega\}\;\!$.

A {\it section} of $\mathcal{P}_{\! e}$ is a map $\mathcal{S} : \Omega\longrightarrow \widetilde{N}_e$
such that $\mathcal{P}_{\! e}\big( \mathcal{S}(\omega )\big) =\omega$ for all $\omega\in \Omega\;\!$.
This means that each $\mathcal{S}(\omega )$ is of the form $\mathcal{S}(\omega )=(\omega\;\! , T(\omega))$
with $T(\omega )\in \cb(H_{e,\omega})\;\!$. It is called {\it bounded} if
$\sup\limits_{\omega\in \Omega}\| T(\omega )\|<+\infty\;\!$.

The following result is the operator version of Proposition \ref{synth.vect}.

\begin{prop}\label{synth.oper}
For each $a\in N$, the map $\Omega\ni\omega\longmapsto \big(\omega\;\! ,\pi_{e,\omega}(a)\big)
\in \widetilde{N}_e$ is a bounded section of $\mathcal{P}_e$ such that all functions
$\Omega\ni\omega\longmapsto \big( \pi_{e,\omega} (a) x_{\omega} \!\mid\! y_{\omega})_{\omega}\;\!$,
$x\;\! ,y\in Ne\;\!$, are continuous.

Conversely, if $\mathcal{S} : \Omega\ni\omega\longmapsto (\omega\;\! , T(\omega))\in\widetilde{N}_e$
is a bounded section of $\mathcal{P}_{\! e}$ \\ such that all functions
$\Omega\ni\omega\longmapsto \big( T(\omega )\;\! x_{\omega} \!\mid\! y_{\omega})_{\omega}\;\!$,
$x ,y\in Ne$, are continuous, then there exists a unique $a\in N$ such that $\mathcal{S}(\omega )=
\big(\omega\;\! ,\pi_{e,\omega}(a)\big)\;\! ,\omega\in\Omega\;\!$.
\end{prop}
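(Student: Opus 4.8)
The plan is to derive both halves of the statement from Proposition~\ref{synth.vect} and from the identification $N\cong\cb_Z(Ne)$ furnished by Theorem~\ref{isotr}. For the direct assertion, note first that $ax\in Ne$ whenever $a\in N$ and $x\in Ne$, so that $\pi_{e,\omega}(a)x_\omega=(ax)_\omega$ by the very definition of $\pi_{e,\omega}$. Since each $\pi_{e,\omega}$ is a unital $*$-homomorphism of $C^*$-algebras, hence contractive (equivalently, by the estimate $\|\pi_{e,\omega}(a)\|\le\|a\|$ recorded just before the proposition), the section $\Omega\ni\omega\longmapsto\big(\omega,\pi_{e,\omega}(a)\big)$ is bounded. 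Moreover, for $x,y\in Ne$ one has $\big(\pi_{e,\omega}(a)x_\omega\mid y_\omega\big)_\omega=\big((ax)_\omega\mid y_\omega\big)_\omega=\omega\big(\F_e(y^*ax)\big)$, which is the Gelfand transform at $\omega$ of the element $\F_e(y^*ax)\in Z$ and hence a continuous function of $\omega$ by the definition of the topology of $\Omega$; this is just Proposition~\ref{synth.vect} applied to the vector $ax$.

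For the converse, fix a bounded section $\mathcal{S}:\Omega\ni\omega\longmapsto\big(\omega,T(\omega)\big)$ of $\mathcal{P}_e$ all of whose matrix coefficients $\omega\longmapsto\big(T(\omega)x_\omega\mid y_\omega\big)_\omega$, $x,y\in Ne$, are continuous, and put $c:=\sup_{\omega\in\Omega}\|T(\omega)\|<+\infty$. The crucial step is to assemble the fibrewise operators $T(\omega)$ into a single bounded $Z$-linear endomorphism of $Ne$. To this end, fix $x\in Ne$ and consider the section $\Omega\ni\omega\longmapsto\big(\omega,T(\omega)x_\omega\big)\in\widetilde{H}_e$ of $P_e$: it is bounded, since $\|T(\omega)x_\omega\|\le c\,\|x_\omega\|\le c\,\|x\|$ by \eqref{fibre-norm}, and for every $y\in Ne$ the function $\omega\longmapsto\big(T(\omega)x_\omega\mid y_\omega\big)_\omega$ is continuous by hypothesis. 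Hence Proposition~\ref{synth.vect} yields a unique $\Psi(x)\in Ne$ with $\big(\Psi(x)\big)_\omega=T(\omega)x_\omega$ for all $\omega\in\Omega$.

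The resulting map $\Psi:Ne\to Ne$ is additive, and it is $Z$-linear: using \eqref{onZ} one gets, for $z\in Z$, $\big(\Psi(zx)\big)_\omega=T(\omega)(zx)_\omega=\omega(z)\,T(\omega)x_\omega=\omega(z)\big(\Psi(x)\big)_\omega=\big(z\Psi(x)\big)_\omega$ for every $\omega$, whence $\Psi(zx)=z\Psi(x)$ by the norm formula \eqref{fibre-norm} applied to $\Psi(zx)-z\Psi(x)$; it is bounded, with $\|\Psi(x)\|=\sup_{\omega}\|T(\omega)x_\omega\|\le c\,\|x\|$, again by \eqref{fibre-norm}. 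Thus $\Psi\in\cb_Z(Ne)$, and by Theorem~\ref{isotr} there is a unique $a\in N$ with $\Psi=L^{Ne}(a)$, i.e. $(ax)_\omega=\big(\Psi(x)\big)_\omega=T(\omega)x_\omega$ for all $x\in Ne$ and all $\omega\in\Omega$. Since $\pi_{e,\omega}(a)x_\omega=(ax)_\omega$, the bounded operators $\pi_{e,\omega}(a)$ and $T(\omega)$ agree on the dense subspace $\{x_\omega\,;\,x\in Ne\}$ of $H_{e,\omega}$, hence $\pi_{e,\omega}(a)=T(\omega)$ for every $\omega$; that is, $\mathcal{S}(\omega)=\big(\omega,\pi_{e,\omega}(a)\big)$. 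Uniqueness of $a$ follows at once from \eqref{fibre-op.norm} (or from the injectivity of $L^{Ne}$).

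The only genuinely delicate point --- the main obstacle --- is precisely the passage from the pointwise data $\big(T(\omega)\big)_{\omega\in\Omega}$ to the single global module map $\Psi$ on $Ne$, where Proposition~\ref{synth.vect}, together with the standing continuity hypothesis on the matrix coefficients, is indispensable; once $\Psi$ is in hand, the remainder is routine bookkeeping with \eqref{onZ}, \eqref{fibre-norm}, \eqref{fibre-op.norm} and the isomorphism $L^{Ne}$ of Theorem~\ref{isotr}.
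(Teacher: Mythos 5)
Your proof is correct and follows essentially the same route as the paper's: the direct part via the contractivity of $\pi_{e,\omega}$ and the Gelfand-transform continuity of $\omega\mapsto\omega\big(\F_e(y^*ax)\big)$, and the converse by applying Proposition~\ref{synth.vect} fibrewise to assemble a bounded $Z$-linear map on $Ne$ and then invoking Theorem~\ref{isotr}. Your explicit verification of $Z$-linearity via \eqref{onZ} and of the density argument for $\pi_{e,\omega}(a)=T(\omega)$ merely spells out steps the paper leaves implicit.
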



\begin{proof}
For $a\in N$, $\Omega\ni\omega\longmapsto \big(\omega\;\! ,\pi_{e,\omega}(a)\big)\in
\widetilde{N}_e$ is clearly a section of  $\mathcal{P}_e$ and it is bounded by (\ref{fibre-op.norm}).
On the other hand, for every $x ,y\in Ne\;\!$, the continuity of the function
\smallskip

\centerline{$\Omega\ni\omega\longmapsto \big( \pi_{e,\omega} (a) x_{\omega} \!\mid\! y_{\omega})_{\omega}
=\big( (ax)_{\omega} \big|\;\! y_{\omega}\big)_{\! \omega}=\omega\big( \F_e(y^*\! a x)\big)$}
\smallskip

\noindent is consequence of the definition of the topology of $\Omega\;\!$.

Conversely, let $\Omega\ni \omega\longmapsto \mathcal{S}(\omega )=(\omega\;\! ,T(\omega))
\in\widetilde{N}_e$ be a section of $\mathcal{P}_e$ such that
$c:=\sup\limits_{\omega\in \Omega}\| T(\omega )\| <+\infty$ and all functions
$\Omega\ni\omega\longmapsto \big( T(\omega )\;\! x_{\omega} \!\mid\! y_{\omega})_{\omega}\;\!$,
$x ,y\in Ne$, are continuous.

For each $x\in Ne\,,$ $\Omega\ni\omega\longmapsto \big( \omega , T(\omega )x_{\omega}\big)
\in \widetilde{H}_e$ is a bounded continuous section of $P_e\;\!$ with

\centerline{$\| T(\omega ) x_{\omega} \|\leq \| T(\omega )\|\;\! \| x_{\omega}\|\!
\overset{\rm (\ref{fibre-norm})}{\leq}\! c\;\! \| x\|\;\! ,\quad \omega\in \Omega\;\! .$}
\medskip

\noindent Applying Proposition \ref{synth.vect}, we infer that there exists a unique $F(x)\in Ne$ such that
$T(\omega ) x_{\omega} = F(x)_{\omega}\;\! ,\omega\in \Omega\;\! .$ Since
\smallskip

\centerline{$\| F(x)\|\! \overset{\rm (\ref{fibre-norm})}{=}\! \sup\limits_{\omega\in \Omega} \| F(x)_{\omega}\|
=\sup\limits_{\omega\in \Omega} \| T(\omega ) x_{\omega}\|\leq c\;\! \| x\|\;\! ,$}
\smallskip

\noindent $Ne\ni x\longmapsto F(x)\in Ne$ is a bounded $Z$-linear mapping.
Now, according to Theorem \ref{isotr}, there exists a unique $a\in N$ such that
$F(x)=L^{Ne}(a) x=a x$ for all $x\in Ne\;\!$. 
We conclude that, for all $x\in Ne$ and $\omega\in \Omega\;\!$,
\begin{equation*}
T(\omega ) x_{\omega}=F(x)_{\omega} =(ax)_{\omega}=\pi_{e,\omega}(a) x_{\omega}\;\! .
\end{equation*}
Consequently, $T(\omega )=\pi_{e,\omega}(a)$ for all $\omega\in \Omega\;\!$.

\end{proof}

Summing up the aboves, a {\it decomposition of a $W^*$-algebra $M$ along a $W^*$-subalgebra
$\idd_M\in Z\subset Z(M)$, in the realm of Hilbert module representations}, is an identification of $M$ with
a $W^*$-subalgebra of a type $\ty{I}$ $W^*$-algebra $N$ of centre $Z\;\!$, or, equivalently,
an identification of $M$ with a $W^*$-subalgebra of $\cb_Z(Ne)$ for some (does not matter which)
abelian projetion $e\in N$ of central support $z_N(e)\! =\!\idd_N\;\!$, such that $Z$ is identified
with the centre of $\cb_Z(Ne)\;\!$.
According to Theorem \ref{isotr} and Proposition \ref{synth.oper}, $\cb_Z(Ne)$ can be represented
in terms of the bundle $\widetilde{N}_e\;\!$, as the algebra of all appropriately continuous sections
of $\mathcal{P}_e\;\!$. Thus $M$  is identified with an algebra of continuous sections of $\mathcal{P}_e$
such that $Z$ corresponds to the scalar valued sections (cf. (\ref{onZ})).
This leads to a true decomposition of $M$.

\begin{prop}\label{decomp}
Let $M\subset N$ be a $W^*$-subalgebra containing $Z=Z(N)\;\!$.
For each $\omega\in\Omega\;\!$, let us consider any von Neumann algebra $M_{\omega}\subset
\cb(H_{e,\omega})$ containing $\pi_{e,\omega} (M)''$ and contained in $\pi_{e,\omega} (M'\cap N)'$.

Then, for each $a\in M$, the map $\Omega\ni\omega\longmapsto \big(\omega\;\! ,\pi_{e,\omega}(a)\big)
\in \widetilde{N}_e$ is a bounded section of $\mathcal{P}_e$ such that all functions
$\Omega\ni\omega\longmapsto \big( \pi_{e,\omega} (a)\;\! x_{\omega} \!\mid\! y_{\omega})_{\omega}\;\!$,
$x\;\! ,y\in Ne\;\!$, are continuous.

Conversely, if $\mathcal{S} : \Omega\ni\omega\longmapsto (\omega\;\! , T(\omega))\in\widetilde{N}_e$
is a bounded section of $\mathcal{P}_{\! e}$ \\ such that $T(\omega )\in M_{\omega}\;\! ,\omega\in\Omega\;\!$,
and all functions
$\Omega\ni\omega\longmapsto \big( T(\omega )\;\! x_{\omega} \!\mid\! y_{\omega})_{\omega}\;\!$,
$x ,y\in Ne$, are continuous, then there exists a unique $a\in M$ such that $\mathcal{S}(\omega )=
\big(\omega\;\! ,\pi_{e,\omega}(a)\big)\;\! ,\omega\in\Omega\;\!$.
\end{prop}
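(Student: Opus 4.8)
The plan is to deduce both halves of the statement from Proposition \ref{synth.oper} together with the relative double commutant theorem \eqref{stina3} for type $\ty{I}$ $W^*$-algebras.

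The direct assertion is immediate: since $a\in M\subset N$, Proposition \ref{synth.oper} already tells us that $\Omega\ni\omega\longmapsto\big(\omega,\pi_{e,\omega}(a)\big)$ is a bounded section of $\mathcal{P}_e$ and that each function $\Omega\ni\omega\longmapsto\big(\pi_{e,\omega}(a)x_\omega\mid y_\omega\big)_\omega$, $x,y\in Ne$, is continuous. Nothing extra has to be verified here. I would also record at the outset that the prescribed interval $\pi_{e,\omega}(M)''\subset M_\omega\subset\pi_{e,\omega}(M'\cap N)'$ is non-empty and consistent: for $a\in M$ and $b\in M'\cap N$ we have $ab=ba$ in $N$, hence $\pi_{e,\omega}(a)\pi_{e,\omega}(b)=\pi_{e,\omega}(b)\pi_{e,\omega}(a)$, so $\pi_{e,\omega}(M)\subset\pi_{e,\omega}(M'\cap N)'$ and therefore $\pi_{e,\omega}(M)''\subset\pi_{e,\omega}(M'\cap N)'$.

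For the converse, I would first apply Proposition \ref{synth.oper} to the given section $\mathcal{S}$: this yields a unique $a\in N$ with $\mathcal{S}(\omega)=\big(\omega,\pi_{e,\omega}(a)\big)$ for all $\omega\in\Omega$. The whole point is then to upgrade $a\in N$ to $a\in M$. Since $Z=Z(N)\subset M$, the relative double commutant theorem \eqref{stina3}, applied with $N_o=N$ and $M_o=M$, gives $(M'\cap N)'\cap N=M$; so it suffices to show that $a$ commutes with every $b\in M'\cap N$. Fix such a $b$. By hypothesis $\pi_{e,\omega}(a)=T(\omega)\in M_\omega\subset\pi_{e,\omega}(M'\cap N)'$ for every $\omega$, so $\pi_{e,\omega}(a)$ commutes with $\pi_{e,\omega}(b)$; since $\pi_{e,\omega}$ is a $*$-homomorphism, this means $\pi_{e,\omega}(ab-ba)=0$ for all $\omega\in\Omega$. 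By the operator norm formula \eqref{fibre-op.norm}, taken with $D=\Omega$, this forces $\|ab-ba\|=0$, i.e. $ab=ba$. As $b\in M'\cap N$ was arbitrary, $a\in(M'\cap N)'\cap N=M$, and uniqueness of $a$ is inherited from Proposition \ref{synth.oper}.

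I do not expect a genuine obstacle: the argument is essentially a two-line reduction to Proposition \ref{synth.oper}, the separation-of-points property \eqref{fibre-op.norm} of the fibrewise representations $(\pi_{e,\omega})_{\omega\in\Omega}$, and the relative bicommutant identity \eqref{stina3}. The only place asking for a moment's care is keeping the two relative commutants straight — $M'\cap N$ is the relative commutant inside $N$, which is intrinsic and representation-independent — and checking that the given bounds on $M_\omega$ are automatically compatible, as noted above.
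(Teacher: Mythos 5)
Your proof is correct and follows exactly the paper's route: reduce both directions to Proposition \ref{synth.oper}, then upgrade the resulting $a\in N$ to $a\in M$ by observing that $\pi_{e,\omega}(a)\in M_\omega\subset\pi_{e,\omega}(M'\cap N)'$ together with the norm formula \eqref{fibre-op.norm} forces $a$ to commute with $M'\cap N$, whence $a\in(M'\cap N)'\cap N=M$ by the relative bicommutant identity. You merely spell out the commutation step that the paper leaves implicit.
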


\begin{proof}
We have just to apply Proposition \ref{synth.oper}, observing that, for $a\in N$, if $\pi_{e,\omega}(a)
\in M_{\omega}\;\! ,\omega\in\Omega\;\!$, then $a\in M$. But this follows by noticing that
$\pi_{e,\omega}(a)\in M_{\omega}\;\! ,\omega\in\Omega\;\!$, and (\ref{fibre-op.norm}) imply
$a\in (M'\cap N)'\cap N=M$ (for the last equality see e.g. \cite{D1}, Part III, Ch. 7, Ex. 13 b).

\end{proof}

Since we do not know whether $\pi_{e,\omega} (M)''=\pi_{e,\omega} (M'\cap N)'$ and, for
$\omega$ not isolated point of $\Omega\;\!$, $H_{e,\omega}$ can have huge dimension
(cf. \cite{FF}, Theorem 1; \cite{T1}; \cite{T}, Theorem V.5.1),
it seems us preferable to work only with those elements of the algebras $\cb(H_{e,\omega})$
which can be "continuously connected with elements of the neighbour algebras", that is with
continuous sections of $\mathcal{P}_{\! e}\;\!$. This means to work with
the Hilbert module $Ne$ and module maps on it.


\section{The reduction of the standard representation}
\label{sec6}

We shall call an {\it antiunitary operator} on a complex Hilbert space $H$ any isometrical,
bijective, antilinear operator $J:H\longrightarrow H\,$. If $J$ is also involutive,
that is $J^{-1}=J\,$, then it is a conjugation.

A von Neumann algebra $M\subset\cb(H)$ is said to be in {\it standard
form} if there exists a conjugation $J$ on $H$ such that
\medskip

\centerline{$JMJ=M'\text{ and } JzJ=z^*,\quad z\in Z(M)\, .$}
\smallskip

Abelian von Neumann algebras are in standard form if and only if they are
maximal abelian von Neumann algebras (se e.g, \cite{SZ1}, E.7.16 or \cite{KR2}, Theorem 9.3.1).
For general von Neumann algebras
the following standardness criterion was proved in \cite{FZ}, Theorem 2.4 :

\begin{thm}
\label{st}
Let $M\subset\cb(H) $ be a von Neumann algebra. If there exists a bijective antilinear
operator $T:H\longrightarrow H$ such that
\begin{equation}
\label{thm-cond}
TMT^{-1}=M'\text{ and }\, TzT^{-1}=z^*\text{ for all }z\in Z(M)\;\! ,
\end{equation}
then $M$ is in standard form.
\end{thm}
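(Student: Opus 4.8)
The plan is to repair $T$ into a conjugation in two stages — first making it isometric, then involutive — keeping both identities in \eqref{thm-cond} valid throughout.

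Since $T$ may be taken bounded with bounded inverse, it has an antilinear adjoint $T^{*}$, the operator $S:=T^{*}T$ is positive and invertible, and the polar decomposition $T=Uh$ with $h:=S^{1/2}$ gives an antiunitary operator $U=Th^{-1}$ on $H$. Two short computations control $h$. Writing the second relation in \eqref{thm-cond} as $Tz=z^{*}T$ for $z\in Z(M)$ and taking antilinear adjoints gives $T^{*}z=z^{*}T^{*}$ as well, whence $SzS^{-1}=T^{*}(TzT^{-1})(T^{*})^{-1}=T^{*}z^{*}(T^{*})^{-1}=z$; thus $S$, and with it $h=S^{1/2}\in\{S\}''$, commutes with $Z(M)$. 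On the other hand a direct check gives $(TxT^{-1})^{*}=(T^{*})^{-1}x^{*}T^{*}$ for every $x\in\cb(H)$; since $M'$ is $*$-closed, this together with $TMT^{-1}=M'$ shows that conjugation by $T^{*}$ carries $M'$ onto $M$, so that $SMS^{-1}=T^{*}(TMT^{-1})(T^{*})^{-1}=T^{*}M'(T^{*})^{-1}=M$.

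Next I would invoke the standard fact that a positive invertible operator implementing an automorphism of a von Neumann algebra implements it together with all of its powers: $SMS^{-1}=M$ forces $S^{t}MS^{-t}=M$ for every real $t$, and in fact $S^{\zeta}xS^{-\zeta}\in M$ for all $x\in M$ and $\zeta\in\bc$. (For $x\in M$ and a normal functional $\f$ vanishing on $M$, the entire function $\zeta\longmapsto\f(S^{\zeta}xS^{-\zeta})$ vanishes at every integer and is bounded on vertical lines, hence vanishes identically by a Carlson-type theorem; as $M$ is $\sigma$-weakly closed, $S^{\zeta}xS^{-\zeta}\in M$.) In particular $h=S^{1/2}$ normalises $M$, and therefore $U=Th^{-1}$ satisfies $UMU^{-1}=T(h^{-1}Mh)T^{-1}=TMT^{-1}=M'$ and, since $h$ commutes with $Z(M)$, $UzU^{-1}=TzT^{-1}=z^{*}$ for $z\in Z(M)$. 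This reduces the theorem to the case in which the intertwiner is already an antiunitary.

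So assume $U$ is an antiunitary with $UMU^{-1}=M'$ and $UzU^{-1}=z^{*}$ on $Z(M)$. Conjugation by $U$ being a conjugate-linear $*$-isomorphism of $\cb(H)$, $v:=U^{2}$ is a unitary which normalises $M$ (hence $M'$) and, since $U^{2}zU^{-2}=Uz^{*}U^{-1}=(UzU^{-1})^{*}=z$ for $z\in Z(M)$, commutes with $Z(M)$. For any unitary $w$ in the group $G$ of unitaries normalising $M$ and commuting with $Z(M)$, the operator $wU$ is again an antiunitary fulfilling \eqref{thm-cond}, and $(wU)^{2}=w\,(UwU^{-1})\,v$ with $UwU^{-1}\in G$. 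Hence it suffices to produce $w\in G$ with $w\,(UwU^{-1})=v^{-1}$: then $J:=wU$ is a conjugation with $JMJ=M'$ and $JzJ=z^{*}$, i.e.\ $M$ is in standard form. This last step — solving the twisted quadratic equation in $G$, which contains $\mathcal U(M)\cup\mathcal U(M')$ — is where the real work lies; I expect to handle it by an averaging argument over the abelian algebra $Z(M)$, reducing to the uniqueness of the standard form and to the elementary fact, recalled just before Theorem~\ref{st}, that a maximal abelian von Neumann algebra is in standard form.
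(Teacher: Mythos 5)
First, a remark on the comparison: the paper does not actually prove Theorem \ref{st} here -- it is quoted from \cite{FZ}, Theorem 2.4, with a bare $\square$ -- so the relevant benchmark is that external proof, whose strategy (splitting $M$ into a properly infinite part handled by \cite{FZ}, Proposition 2.3, and finite parts handled via cyclic/separating vectors and coupling-type arguments) is the same one reused later in this paper for the implication (vii)$\Rightarrow$(i) of Theorem \ref{st1}. Your first stage is correct and complete: the computations showing that $S=T^{*}T$ commutes with $Z(M)$ and satisfies $SMS^{-1}=M$ are right, and the Carlson-theorem argument does give $S^{\zeta}xS^{-\zeta}\in M$ (the function is entire, of exponential type, bounded on the imaginary axis since $S^{it}$ is unitary, and vanishes at the nonnegative integers), so $h=S^{1/2}$ normalises $M$ and $U=Th^{-1}$ is an antiunitary still satisfying \eqref{thm-cond}. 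This reduction to the antiunitary case is a genuine (if standard) simplification.

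The gap is in the last step, and it is not a technical loose end but the entire content of the theorem. You reduce standardness to solving $w\,(UwU^{-1})=v^{-1}$, $v=U^{2}$, with $w$ a unitary normalising $M$ and commuting with $Z(M)$; but this equation is \emph{equivalent} to the conclusion: if a conjugation $J$ as in the definition of standard form exists, then $w:=JU^{-1}$ is such a solution, and conversely. So nothing has been gained -- the obstruction to solving this twisted square-root problem is exactly the obstruction to standardness, and there is no formal group-theoretic reason for it to vanish; its vanishing must be extracted from the structure theory of $M$ (coupling functions in the finite case, the properly infinite dichotomy otherwise). The concluding sentence ("I expect to handle it by an averaging argument over $Z(M)$, reducing to the uniqueness of the standard form\dots") does not identify how the hypothesis on $U$ would enter such an argument, and the cited "elementary fact" about maximal abelian algebras only settles the case $M=Z(M)=M'$. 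As it stands the proposal proves only the reduction to the antiunitary case; the theorem itself remains unproved.
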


\noindent\hspace{12.33 cm}$\square$

\begin{rem}
We notice that for the standardness of a abelian von Neumann algebra or of
a factor $M\subset\cb(H)$ it is enough to assume the existence of a bijective antilinear
operator $T: H\longrightarrow H$ satisfying only the condition
\medskip

\centerline{$TMT^{-1}=M'\, .$}
\smallskip

\noindent Nevertheless, in the general case - even for finite-dimensional $M$ - we need
both conditions.
\end{rem}
\begin{proof}
Indeed, if $M$ is abelian and $M'=TMT^{-1}$ for some bijective
map $T:H\longrightarrow H$, then also $M'$ is abelian and therefore $M$ is
a maximal abelian von Neumann algebra.
On the other hand, if $M$ is a factor
then the condition $TzT^{-1}=z^*, z\in Z(M)\,$, is trivially fulfilled for any bijective,
antilinear operator $T$ on $H\,$.

Now we shall give an example of a von Neumann algebra $M\subset\cb(H)$
which is not in standard form, but for which there exists a conjugation $J$ on $H$
satisfying the condition
\medskip

\centerline{$JMJ=M'\, .$}
\smallskip

Let $N\subset\cb(K)$ be a von Neumann algebra not in standard form, and $J_0$ a conjugation on
$K$ such that $J_0NJ_0=N$ (we can choose, for example, the finite-dimensional
Hilbert space $K:=\bc^2$, $N:=\cb(K)$ or $N:=\bc\;\! \idd_{K}\,$, and any
conjugation $J_0$ on $\bc^2$).
Set $H :=K\bigoplus K$ and
\medskip

\centerline{$M:=\big\{ x\oplus x'\, ;\, x\in N\;\! ,x'\in N'\big\}\subset\cb(H)\, .$}
\smallskip

Considering the conjugation
\medskip

\centerline{$J : H\ni \xi\oplus\eta \longmapsto (J_0\eta )\oplus (J_0\xi )\in H\, ,$}
\smallskip

\noindent we have
\medskip

\centerline{$J(x\oplus x')J =(J_0x'J_0)\oplus (J_0xJ_0)\, ,\quad x\in N\;\! ,x'\in N'\,,$}
\medskip

\noindent and thus the condition $JMJ=M'$ is satisfied. However $M$ is not in standard
form, because, denoting by $p$ the central projection $\idd_{K}\oplus 0_{K}$
of $M$, the reduced/induced von Neumann algebra $M_p$ is spatially

\noindent $*$-isomorphic to $N$ and therefore not in standard form.

\end{proof}
\begin{rem}
In Proposition 2.3 of \cite{FZ} it was proved that a properly infinite von Neumann algebra
$M\subset \cb(H)$ is in standard form whenever there exists a multiplicative antilinear
isomorphism of $M$ onto $M'$, which acts on the centre of $M$ as the $*$-operation.
Here the condition on $M$ of being properly infinite cannot be dropped.
\end{rem}
\begin{proof}
Indeed, let $R\subset \cb(H)$ be the hyperfinite type $\ty{II_1}$ factor in standard form, and consider
the von Neumann algebra $R\otimes \bc\subset \cb(H\otimes\bc^2)$. Obviously, there exists a
multiplicative antilinear $*$-isomorphism of $R\otimes \bc\sim R$ onto $(R\otimes\bc)'=R'\otimes\bm_2(\bc)\;\!$.

Suppose that  $R\otimes \bc$ acts in standard form on $H\otimes\bc^2$.
Let $\xi_0:=\xi\oplus\eta$ be a trace vector in $H\otimes\bc^2$, with $J_{\xi_0}$
the corresponding conjugation. 

First, 

\centerline{$\{x\xi\oplus x\eta\mid x\in R\}\subset H\otimes\bc^2$}
\medskip
\noindent
 is dense in $H\otimes\bc^2$.
\medskip
\noindent
Second,

\centerline{$J_{\xi_0}(x\xi\oplus x\eta)=x^*\xi\oplus x^*\eta\, ,\quad x\in R.$}
\medskip

Consider the selfadjoint projections $p,q\in R'\otimes\bm_2(\bc)$ given by 
\begin{equation*}
p=
\left(
\begin{array}{ll}
\idd_H & 0 \\
0 & 0
\end{array}
\right),\quad
q=
\left(
\begin{array}{ll}
0 & 0 \\
0 & \idd_H
\end{array}
\right).
\end{equation*}
We have $pJ_{\xi_0}q=0$. In fact, on a dense subspace we get
$$
pJ_{\xi_0}q(x\xi\oplus x\eta)=pJ_{\xi_0}(0\oplus x\eta)=p(0\oplus x^*\eta)=0.
$$
Thus
$$
J_{\xi_0}=pJ_{\xi_0}p\oplus qJ_{\xi_0}q.
$$

Therefore, on one hand 
$$
x':=\left(
\begin{array}{ll}
0 & \idd_H\\
0 & 0
\end{array}
\right)\in (R\otimes \bc)',
$$
and on the other hand $x'\notin J_{\xi_0}(R\otimes \bc)J_{\xi_0}$, which is a contradiction.

\end{proof}

Let now $M\subset \cb(H)$ be a von Neumann algebra, and $\idd_H\in Z\subset Z(M)$ a
von Neumann subalgebra.

\begin{defin}
\label{dekz}
We say that $M$ is in  {\it standard form reduced along $Z$} if there exist an abelian projection $e$
of $N:=Z'$ with $z_N(e)=\idd_H\;\!$, and an involutive $Z$-antiunitary map $T:Ne\to Ne\;\!$, such that
$$
Te=e
$$
and
\begin{equation}
\label{czaza0}
\begin{split}
&TL^{Ne}(M)T=L^{Ne}(M')\, ,\\
&TL^{Ne}(z)T=L^{Ne}(z^{*})\,,\quad z\in Z(M)\, .
\end{split}
\end{equation}
\end{defin}

We notice that any standard von Neumann algebra $M\subset \cb(H)$ is in standard form
reduced along $\bc\;\!\idd_H$.
For let us assume that $H\neq\{0\}$ and $J$ is a conjugation on $H$ such that
\medskip

\centerline{$JMJ=M'\text{ and }JzJ=z^*,\quad z\in Z(M)\, .$}
\smallskip

\noindent Then there exists some $\xi\in H$ such that
$$
\xi_0:=\frac12(\xi+J\xi)\neq0\, ,
$$
because otherwise $J=-\idd_H$ would be simultaneously antilinear and linear, contradicting $H\neq\{0\}$.
Then the orthogonal projection $e$ onto $\bc\xi_0$ is an abelian projection of 
$N:=(\bc\xi_0)'=\cb(H)$ with $z_N(e)=\idd_H$. Defining $T:Ne\to Ne$ by
$$
Tx:=JxJ\,,\quad x\in N_e\,,
$$
it is easily seen that $Te=e$ and \eqref{czaza0} is verified.
\smallskip

The goal of this section is to extend Theorem \ref{st} to standard forms reduced along
arbitrary von Neumann subalgebras of the centre. In particular, we will see that if $M$
is a von Neumann algebra and $Z$ is any von Neumann subalgebra of its centre
$Z(M)\;\!$, then $M$ is in standard form if and only if it is in standard form reduced
along $Z\;\!$.

\begin{prop}
\label{prrci}
Let $Z\subset \cb(H)$ be an abelian von Neumann algebra, and $\xi_0\in H$ cyclic unit vector
for $Z'$. Then there exists a unique abelian projection $e$ of $Z'$ satisfying $e\;\!\xi_0=\xi_0\;\!$
and, necessarily, $z_{Z'}(e)=\idd_{H}\;\!$.

If $M\subset \cb(H)$ is a von Neumann algebra with $Z\subset Z(M)$, and $\xi_0$ is a
cyclic and separating vector for $M$, then for the unique abelian projection $e$ of $Z'$
satisfying $e\;\!\xi_0=\xi_0\;\!$, also the equality $J_{\xi_0} eJ_{\xi_0}=e\,$ holds true, where
$J_{\xi_0}$ is the modular conjugation of $M$ associated to $\xi_0$.
\end{prop}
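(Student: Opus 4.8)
The plan is to prove the three assertions of the proposition in order — existence of $e$, its uniqueness (together with $z_{Z'}(e)=\idd_H$), and finally the relation $J_{\xi_0}eJ_{\xi_0}=e$ — the last one following almost formally from the uniqueness, once one checks that $J_{\xi_0}$ normalises $N:=Z'$.

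Start with the observation that, since $Z$ is abelian, $Z\subset Z'=N$, so $Z(N)=N\cap N'=Z'\cap Z''=Z'\cap Z=Z$; thus $N$ is a von Neumann algebra with centre exactly $Z$, and $\xi_0$ being cyclic for $N$ is equivalent to $\xi_0$ being separating for $N'=Z$. For existence I would take $e$ to be the orthogonal projection of $H$ onto $\overline{Z\xi_0}$. Since $\idd_H\in Z$ we have $\xi_0\in\overline{Z\xi_0}$, hence $e\xi_0=\xi_0$; since $\overline{Z\xi_0}$ is invariant under the selfadjoint algebra $Z$, we get $e\in Z'=N$; and since $N\xi_0\subset NeH$ is already dense in $H$, the central support is $z_N(e)=\idd_H$. \textbf{The one genuinely technical step} is that $e$ is abelian in $N$: restriction to $eH$ is a $*$-isomorphism of $eNe$ onto the induced algebra $N_e\subset\cb(eH)$, whose commutant in $\cb(eH)$ is, by the reduction formula, $(N')_e=Z_e$; but $Z_e$ is an abelian von Neumann algebra on $eH$ admitting $\xi_0$ as a cyclic vector (as $\overline{Z_e\xi_0}=\overline{Z\xi_0}=eH$), hence $Z_e$ is maximal abelian on $eH$, so $N_e=(Z_e)'=Z_e$ is abelian, and therefore so is $eNe$.

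For uniqueness, suppose $e,f$ are abelian projections of $N$ with $e\xi_0=f\xi_0=\xi_0$. As in the previous paragraph each of them automatically has full central support, so $eNe=Z(N)e=Ze$; in particular we may write $efe=ze$ with $z\in Z$, $0\le z\le\idd_H$. Applying this to $\xi_0$ gives $\xi_0=efe\,\xi_0=z\xi_0$, so $(\idd_H-z)\xi_0=0$, and since $\idd_H-z\in Z$ and $\xi_0$ is separating for $Z$ this forces $z=\idd_H$, i.e. $efe=e$. Then $e(\idd_H-f)e=0$ yields $(\idd_H-f)e=0$, so $e\le f$; by symmetry $f\le e$, hence $e=f$. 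Together with the first paragraph this proves the first part of the proposition.

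For the modular statement, note that $Z\subset Z(M)$ gives $M,M'\subset Z'=N$, and $\xi_0$ cyclic for $M$ is in particular cyclic for $N$, so the first part applies and produces the unique abelian $e\in N$ with $e\xi_0=\xi_0$. Write $J:=J_{\xi_0}$. Tomita--Takesaki theory gives $JMJ=M'$ and $J\xi_0=\xi_0$; moreover the modular automorphism group fixes $Z(M)$ pointwise, so $\Delta^{it}z\xi_0=z\xi_0$ for $z\in Z(M)$ and all $t$, whence $\Delta^{1/2}z\xi_0=z\xi_0$, and therefore $Jz\xi_0=J\Delta^{1/2}z\xi_0=Sz\xi_0=z^{*}\xi_0$; since $JzJ$ and $z^{*}$ both lie in $M'$ and agree on the vector $\xi_0$ (which is separating for $M'$), we conclude $JzJ=z^{*}$ for $z\in Z(M)$, in particular $JZJ=Z$, hence $JNJ=JZ'J=(JZJ)'=Z'=N$. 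Consequently $JeJ$ is a projection (because $J^{2}=\idd_H$), it lies in $N$, it fixes $\xi_0$ (since $JeJ\xi_0=Je\xi_0=J\xi_0=\xi_0$), and $(JeJ)N(JeJ)=J(eNe)J$ is conjugate-linearly $*$-isomorphic to the abelian algebra $eNe$, hence abelian; so $JeJ$ is an abelian projection of $N$ fixing $\xi_0$, and the uniqueness just proved gives $JeJ=e$. The only delicate ingredient here is the standard fact that $J$ acts as the adjoint on $Z(M)$, which is why I would spell it out as above; everything else is formal.
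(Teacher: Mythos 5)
Your proof is correct, and its skeleton coincides with the paper's: both take $e$ to be the orthogonal projection onto $\overline{Z\xi_0}$, both get $z_{Z'}(e)=\idd_H$ from cyclicity of $\xi_0$ for $N=Z'$, and both obtain $J_{\xi_0}eJ_{\xi_0}=e$ by noting that $J_{\xi_0}$ normalises $N$, so $J_{\xi_0}eJ_{\xi_0}$ is again an abelian projection of $N$ fixing $\xi_0$, and uniqueness applies. The differences lie in the two technical steps. For the abelianness of $e$, the paper constructs a normal conditional expectation $E:N\to Z$ by an iterated Schwarz-inequality estimate $\omega_{\xi_0}(\,{\bf\cdot}\,x)\leq\|x\|\,\omega_{\xi_0}$ followed by Radon--Nikodym, and then checks $exe=E(x)e\in Ze$ directly on vectors $z\xi_0$; you instead invoke the commutation formula $\big(eNe\lceil_{eH}\big)'=(N')_e=Z_e$ together with the fact that an abelian von Neumann algebra with a cyclic vector is maximal abelian. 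Your route is shorter and relies only on facts the paper quotes elsewhere; the paper's route has the side benefit of exhibiting the expectation $E$ (the analogue of $\Phi_e$), which is the object underlying the Hilbert-module inner product used throughout the article. For uniqueness, the paper shows $f\geq e$ (since $f$ fixes $Z\xi_0$ pointwise) and then invokes the comparison fact that abelian projections $e\leq f$ satisfy $e=f\,z_N(e)$; your symmetric argument via $efe=ze$ with $z\in Z$ and the separating property of $\xi_0$ for $Z$ avoids that comparison lemma and is, if anything, more self-contained. Finally, you spell out why $J_{\xi_0}zJ_{\xi_0}=z^{*}$ on $Z(M)$, which the paper takes as known; that is a correct and standard Tomita--Takesaki computation.
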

\begin{proof}
Put $N:=Z'$ and consider on it the vector state $\om:=\om_{\xi_0}$, together with the linear
forms on $Z$ given by $\om(\,{\bf\cdot}\,x)$, $x\in N$. Using the Schwarz
inequality repeatedly, we get $0\leq \om(\,{\bf\cdot}\,x)\leq \| x\|\;\!\om$ for every
$0\leq x\in N\,$:
\medskip

\noindent\hspace{11.7 mm}$\om(zx)=\om\big( z^{1/2}(z^{1/2}x)\big)$

\noindent\hspace{6.9 mm}$\leq \om (z)^{1/2}\om(zx^2)^{1/2}
=\om (z)^{1/2}\om\big( z^{1/2}(z^{1/2}x^2)\big)^{1/2}$
\smallskip

\noindent\hspace{6.9 mm}$\leq\om (z)^{1/2 +1/2^2}\om(zx^{2^2})^{1/2^2}$
\smallskip

\noindent\hspace{12.5 mm}$\hdots$
\smallskip

\noindent\hspace{6.9 mm}$\leq\om (z)^{1/2 +1/2^2+\, ...\, 1/2^n}\om(zx^{2^n})^{1/2^n}$
\smallskip

\noindent\hspace{6.9 mm}$\leq\om (z)^{1/2 +1/2^2+\, ...\, 1/2^n}\| z\|^{1/2^n} \| x\| \stackrel{n}{\longrightarrow}
\om (z) \| x\|\, ,\quad 0\leq z\in Z\, .$
\medskip

\noindent Then by the Radon-Nikodym Theorem, for every $0\leq x\in N$, there
exists a unique element $0\leq E(x)\in Z$ such that $\om(\,{\bf\cdot}\,x)=
\om(\,{\bf\cdot}\,E(x))$. It is straightforwardly seen that the map
$x\mapsto E(x)$, recovered as above, extends by linearity to a normal
conditional expectation $E : N\to Z\,$.

Consider the projection $e:=[Z\;\!\xi_0]\in N$. For every $x\in N\,$, we get
\begin{align*}
( exez_1\xi_0\;\! |\;\! z_2\xi_0 ) =\,&
( xz_1\xi_0\;\! |\;\! z_2\xi_0 ) =\om (z_2^*z_1^{} x) \\
=\,&\om \big( z_2^*z_1^{} E(x)\big) =( E(x)z_1\xi_0\;\! |\;\! z_2\xi_0 ) \\
=\,&( E(x)ez_1\xi_0\;\! |\;\! z_2\xi_0 )\, ,\quad z_1\;\! ,z_2\in Z\, ,
\end{align*}
which implies $exe=E(x)\;\! e\in Ze\,$. Therefore, $e$ is an abelian
projection in $N$. We have
$$
z_N(e)=[NZ\xi_0]=[N\xi_0]=\idd_H
$$
and, plainly, $e\;\!\xi_0 =\xi_0\,$.
\smallskip

Let now $f\in N$ be any other abelian projection with $f\xi_0=\xi_0$. Then
$$
z\xi_0=zf\xi_0=fz\xi_0\,,\quad z\in Z\, ,
$$
so $f\geq e\;\!$. Being $e\;\! ,f$ abelian projections with
$z_N(e) =\idd_H\,$, it follows that $e=f\;\! z_N(e) =f\,$.
\smallskip

Suppose now that $\xi_0$ is cyclic and separating for $M$ with modular conjugation $J_{\xi_0}$.
As $J_{\xi_0} zJ_{\xi_0}=z^*,z\in Z\,$, it follows that $N\ni x\mapsto J_{\xi_0} x^*J_{\xi_0}\in N$
is a $*$-anti-automorphism of $N$. Thus $J_{\xi_0} eJ_{\xi_0}$ is still an 
abelian projection in $N$ with $z_N(J_{\xi_0} eJ_{\xi_0})=\idd_H$ leaving $\xi_0$ fixed.
By the previous part of the proof we conclude that $J_{\xi_0} eJ_{\xi_0}=e\,$.

\end{proof}

\begin{cor}
\label{prrci7}
Let $M$ be a von Neumann algebra acting in standard form on the Hilbert space $H$ with
conjugation $J$. For each von Neumann subalgebra $\idd_M\in Z\subset Z(M)$, there exists
an abelian projection $e$ of $Z'=:N$ with $z_N(e)=\idd$ such that $JeJ=e\;\!$.
\end{cor}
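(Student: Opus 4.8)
The plan is to reduce everything to Proposition~\ref{prrci} applied on suitable $J$-invariant reducing subspaces, and then to patch the resulting abelian projections together. Set $N:=Z'$. Since $J$ implements the standard form, $JzJ=z^*$ for every $z\in Z\subset Z(M)$, so the $*$-anti-automorphism $x\mapsto Jx^*J$ of $\cb(H)$ carries $Z$ onto $Z$; consequently $JNJ=JZ'J=(JZJ)'=Z'=N$, every projection $q$ of $Z=N'$ satisfies $JqJ=q^*=q$, and $J$ restricts to a conjugation on each subspace $qH$. I will also use the elementary observation that a nonzero closed $J$-invariant subspace $K\subset H$ contains a unit vector fixed by $J$: otherwise $\eta+J\eta=0$ for all $\eta\in K$, i.e. $J|_K=-\idd_K$, contradicting that $J|_K$ is antilinear and $K\neq\{0\}$.

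Next I would pick, by Zorn's lemma, a maximal family $(\xi_\iota)_{\iota}$ of unit vectors with $J\xi_\iota=\xi_\iota$ and with the cyclic projections $q_\iota:=[N\xi_\iota]\in N'=Z$ mutually orthogonal, and show $\sum_\iota q_\iota=\idd_H$. Indeed, if $q:=\idd_H-\sum_\iota q_\iota\neq0$, then $q$ is a projection of $Z$, hence $JqJ=q$; by the observation above $qH$ contains a $J$-fixed unit vector $\xi$, and since $q\in N'$ we get $N\xi=qN\xi\subset qH$, so $[N\xi]\leq q$ is orthogonal to every $q_\iota$ --- contradicting maximality.

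Now fix $\iota$. The reduced algebra $Nq_\iota$ on $q_\iota H$ equals $(Zq_\iota)'$, and $\xi_\iota$ is a cyclic unit vector for it; so Proposition~\ref{prrci}, applied on $q_\iota H$ with $Zq_\iota$ in place of $Z$, yields a \emph{unique} abelian projection $e_\iota$ of $Nq_\iota$ with $e_\iota\xi_\iota=\xi_\iota$, necessarily of central support $z_{Nq_\iota}(e_\iota)=q_\iota$. Since $J(Nq_\iota)J=(JNJ)(Jq_\iota J)=Nq_\iota$ and $x\mapsto Jx^*J$ is a $*$-anti-automorphism of $Nq_\iota$ (hence carries abelian projections to abelian projections), $Je_\iota J$ is again an abelian projection of $Nq_\iota$, and $(Je_\iota J)\xi_\iota=Je_\iota(J\xi_\iota)=Je_\iota\xi_\iota=\xi_\iota$; by the uniqueness just quoted, $Je_\iota J=e_\iota$. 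Finally I would put $e:=\sum_\iota e_\iota$. As $q_\iota\in Z(N)$, each $e_\iota\leq q_\iota$ is also an abelian projection of $N$ with $z_N(e_\iota)=q_\iota$; the $e_\iota$ are mutually orthogonal, and $e_\iota Ne_\kappa=e_\iota q_\iota Nq_\kappa e_\kappa=0$ for $\iota\neq\kappa$ (as $q_\iota q_\kappa=0$ and the $q_\iota$ are central), so $eNe=\bigoplus_\iota e_\iota Ne_\iota$ is abelian, whence $e$ is an abelian projection of $N$, $z_N(e)=\sup_\iota q_\iota=\idd_H$, and $JeJ=\sum_\iota Je_\iota J=\sum_\iota e_\iota=e$.

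The point that needs care is structural rather than computational: one cannot simply ``decompose $M$ into $\sigma$-finite pieces'' (e.g. $M=\cb(H)$ with $H$ non-separable admits no such decomposition along its centre), so Proposition~\ref{prrci} cannot be applied once and for all; the device that makes the argument go through is to tile $H$ by the mutually orthogonal $J$-invariant subspaces $q_\iota H$ with $q_\iota\in Z$, on each of which a cyclic vector for $Z'q_\iota$ is available by maximality. Everything else --- that an abelian projection of $Nq_\iota$ is abelian in $N$, the behaviour of central supports under reduction, and the $s$-convergence of $\sum_\iota e_\iota$ --- is routine bookkeeping. (Note that the modular-conjugation half of Proposition~\ref{prrci} is not needed here: $J$-invariance of $e_\iota$ follows from uniqueness alone.)
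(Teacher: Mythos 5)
Your proof is correct, but it reaches the conclusion by a genuinely different route than the paper. The paper's proof leaves the given Hilbert space: it invokes the structure theorem writing $M$ as $\bigoplus_\iota\big(B_\iota\ots N_\iota\big)$ with $B_\iota$ of countable type and $N_\iota$ a type $\ty{I}$ factor, realizes the standard form concretely via the GNS construction for $\bigoplus_\iota(\f_\iota\otimes\t_\iota)$, uses the uniqueness of standard forms to find a unitary $u$ carrying $J$ to $J_\f=\bigoplus_\iota(J_{\f_\iota}\otimes J_{\t_\iota})$, applies Proposition \ref{prrci} on each $H_{\f_\iota}$ and handpicks a $J_{\t_\iota}$-invariant rank-one projection on each $H_{\t_\iota}$, and finally transports $\bigoplus_\iota(e_\iota\otimes f_\iota)$ back through $u$. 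You instead stay in the given representation: you tile $\idd_H$ by projections $q_\iota=[N\xi_\iota]\in N'=Z=Z(N)$ with $\xi_\iota$ $J$-fixed (Zorn plus the observation that a nonzero $J$-invariant subspace contains a $J$-fixed unit vector), apply Proposition \ref{prrci} on each $q_\iota H$ to the abelian algebra $Z_{q_\iota}$ with cyclic vector $\xi_\iota$ for $(Z_{q_\iota})'=N_{q_\iota}$, extract $Je_\iota J=e_\iota$ from the \emph{uniqueness} clause of that proposition (rather than from its modular-conjugation clause), and sum. All the supporting steps check out: $JNJ=N$ and $Jq_\iota J=q_\iota$ because $JzJ=z^*$ on $Z\subset Z(M)$; $(N_{q_\iota})'=(N')_{q_\iota}=Z_{q_\iota}$ by the standard induction formula for commutants; $e_\iota\leq q_\iota$ with $q_\iota$ central in $N$ makes each $e_\iota$ abelian in $N$ with $z_N(e_\iota)=q_\iota$, so $e=\sum_\iota e_\iota$ is abelian with full central support and $JeJ=e$ by strong continuity of conjugation by $J$. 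What your approach buys is economy and self-containedness: no structure theorem for type $\ty{I}$ decompositions, no GNS machinery, no appeal to uniqueness of the standard form, and no change of Hilbert space. What the paper's approach buys is an explicit model of $e$ as a sum of elementary tensors $e_\iota\otimes f_\iota$ in a concrete standard representation, which is occasionally useful but is not needed for the corollary as stated.
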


\begin{proof}
By \cite{D1}, Part III, Ch. 1, Lemma 7, there exists a family $(B_\iota,N_\iota)_{\iota\in I}$ of pairs
consisting of a $W^*$-algebra $B_\iota$ of countable type and a type $\ty{I}$ factor $N_\iota$ such that
$$
M\text{ and }\bigoplus_{\iota\in I}\big(B_\iota\overline{\otimes}N_\iota\big)\text{ are }*\text{-isomorphic.}
$$
Choose a faithful normal state $\f_\iota$ on every $B_\iota$ and let $\t_\iota$ denote the \\
canonical trace $\t_\iota$ on $N_\iota\;\!$.
Then $\f:=\bigoplus\limits_{\iota\in I}\big(\f_\iota\otimes\t_\iota\big)$ is a faithful, semifinite,
normal weight on $M$, whose GNS representation is
\begin{equation*}
\pi_\f=\bigoplus\limits_{\iota\in I}\big(\pi_{\f_\iota}\otimes\pi_{\t_\iota}\big) : M\longrightarrow
\cb(H_{\f})\;\! ,
\end{equation*}
where $H_\f=\bigoplus\limits_{\iota\in I}\big(H_{\f_\iota}\otimes H_{\t_\iota}\big)$.
Thus $\bigoplus\limits_{\iota\in I}\big(\pi_{\f_\iota}(B_\iota)\overline{\otimes}\pi_{\t_\iota}(N_\iota)\big)
\subset B\big(H_\f)$ is a standard von Neumann algebra, which is $*$-isomorphic to
$\bigoplus\limits_{\iota\in I}\big(B_\iota\overline{\otimes}N_\iota\big)$, and hence to $M$.
By 10.26 of \cite{SZ1}, there exists a unitary $u:H\to H_\f$ such that
\begin{align*}
uMu^*&=\bigoplus_{\iota\in I}\big(\pi_{\f_\iota}(B_\iota)\;\!\overline{\otimes}\;\!\pi_{\t_\iota}(N_\iota)\big)\,,\\
uJu^*&=\bigoplus_{\iota\in I}\big(J_{\f_\iota}\otimes J_{\t_\iota}\big)=J_\f\,.
\end{align*}
In addition, $uZu^*$ is a subalgebra of 
$$
uZ(M)u^*=\bigoplus_{\iota\in I}\big(\pi_{\f_\iota}(Z(B_\iota))\otimes\bc\idd_{H_{\t_\iota}}\big) ,
$$
so for appropriate $W^*$-subalgebras $\idd_{B_\iota}\in Z_\iota\subset Z(B_\iota)$,
$$
uZu^*=\bigoplus_{\iota\in I}\big(\pi_{\f_\iota}(Z_\iota)\otimes\bc\idd_{H_{\t_\iota}}\big)\,.
$$
According to Proposition \ref{prrci}, for each $\iota\in I$ there is an abelian projection
$e_\iota\in\pi_{\f_\iota}(Z_\iota)'$ satisfying $z_{\pi_{\f_\iota}(Z_\iota)'}(e_\iota)=\idd_{H_{\f_\iota}}$, 
$J_{\f_\iota}e_\iota J_{\f_\iota}=e_\iota$.
\smallskip

On the other hand, for each $\iota\in I$ we have also an abelian projection $0\neq f_\iota\in\pi_{\t_\iota}(\bc\idd_{H_{\t_\iota}})'=\cb(H_{\t_\iota})$, hence 
$z_{\pi_{\t_\iota}(\bc\idd_{H_{\t_\iota}})'}(f_\iota)=\idd_{H_{\t_\iota}}$, such that
$J_{\t_\iota}f_\iota J_{\t_\iota}=f_\iota$. Indeed, it is enough to choose some 
$$
0\neq y_\iota=y_\iota^*\in\ga_{\t_\iota}=\{a_\iota\in N_\iota\,;\,\t(a_\iota^* a_\iota)=
\t(a_\iota a_\iota^*)<+\infty\}\;\! ,
$$
and put for $f_\iota$ the orthogonal projection onto $\bc(y_\iota)_{\t_\iota}\subset H_{\t_\iota}$.
Then $\bigoplus\limits_{\iota\in I}\big(e_\iota\otimes f_\iota\big)$ will be an abelian projection in
$$
\bigg(\bigoplus_{\iota\in I}\big(\pi_{\f_\iota}(Z_\iota)\otimes\bc\idd_{H_{\t_\iota}}\big)\bigg)'
=\bigoplus_{\iota\in I}\big(\pi_{\f_\iota}(Z_\iota)'\overline{\otimes}\cb(H_{\t_\iota})\big)
$$
of central support $\idd_{H_\f}$, satisfying $J_\f\big(\bigoplus\limits_{\iota\in I}\big(e_\iota\otimes f_\iota\big)\big)J_\f
=\bigoplus\limits_{\iota\in I}\big(e_\iota\otimes f_\iota\big)$. Thus
$e:=u^*\big(\bigoplus\limits_{\iota\in I}\big(e_\iota\otimes f_\iota\big)\big)u$ is a projection
we are searching for.

\end{proof}

\begin{prop}
\label{monst}
Let $\idd_H\in Z\subset M, M'\subset Z'=:N\subset \cb(H)$ be inclusions of von Neumann
algebras with $Z$ abelian. Suppose that $M$ $($resp. $M')$ has a cyclic vector
$\xi_0\;\!$, and let $e_0\in N$ be the abelian projection with $e_0\xi_0=\xi_0\;\!$, whose
existence is guaranteed by Proposition \ref{prrci}.

If $T:Ne_0\longrightarrow Ne_0$ is a bijective $Z$-antilinear map satisfying
\begin{equation}
\label{aia}
TL^{Ne_0}(M)T^{-1}=L^{Ne_0}(M')\,,
\end{equation}
then $T^{-1}(e_0)\xi_0$ $($resp. $T(e_0)\xi_0)$ is a separating vector for $M$
$($resp. $M')\;\!$. Therefore, $M$ 
acts in standard form.
\end{prop}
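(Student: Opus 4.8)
\emph{Proof strategy.}
The plan is to establish the first assertion; the second then follows by the symmetry $M\leftrightarrow M'$, $T\leftrightarrow T^{-1}$, since \eqref{aia} is equivalent to $T^{-1}L^{Ne_0}(M')T=L^{Ne_0}(M)$ and since the projection $e_0$ of Proposition \ref{prrci} depends only on $Z$ and $\xi_0$ (not on $M$), while $M$ is in standard form precisely when $M'$ is. So assume $\xi_0$ is cyclic for $M$, put $\eta_0:=T^{-1}(e_0)\xi_0\in H$, and let us show that $\eta_0$ is separating for $M$; the standardness of $M$ is then deduced at the end. (Note $N=Z'$ is a type $\ty{I}$ von Neumann algebra, being the commutant of an abelian one, so the constructions of Section \ref{sec4} apply.)

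First I would isolate the following remark: the map $\iota_{\xi_0}\colon Ne_0\ni w\longmapsto w\xi_0\in H$ is injective. Indeed, for $w\in Ne_0$ one has $w=we_0$, hence $w^*w=e_0w^*we_0=\F_{e_0}(w^*w)e_0=(w\!\mid\!w)_{Ne_0}e_0$, and since $e_0\xi_0=\xi_0$ this gives $\|w\xi_0\|^2=\big(\F_{e_0}(w^*w)\xi_0\!\mid\!\xi_0\big)=\om_{\xi_0}\big((w\!\mid\!w)_{Ne_0}\big)$. Now $\xi_0$, being cyclic for $M$, is separating for $M'$, hence for $Z\subseteq Z(M)\subseteq M'$, so the restriction of $\om_{\xi_0}$ to $Z$ is faithful; as $(w\!\mid\!w)_{Ne_0}\ge 0$ lies in $Z$, the vanishing of $w\xi_0$ forces $(w\!\mid\!w)_{Ne_0}=0$, i.e. $w=0$.

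Next, take $a\in M$ with $a\eta_0=0$. By \eqref{aia}, the injectivity of $L^{Ne_0}$ (Theorem \ref{isotr}) and the bijectivity of $T$, there is a unique $a'\in M'$ with $L^{Ne_0}(a)=T^{-1}L^{Ne_0}(a')T$, and $a\mapsto a'$ is a bijection of $M$ onto $M'$ sending $0$ to $0$. Evaluating at $T^{-1}(e_0)\in Ne_0$ gives $a\,T^{-1}(e_0)=L^{Ne_0}(a)\big(T^{-1}(e_0)\big)=T^{-1}\big(L^{Ne_0}(a')(e_0)\big)=T^{-1}(a'e_0)$, whence $a\eta_0=\big(T^{-1}(a'e_0)\big)\xi_0=\iota_{\xi_0}\big(T^{-1}(a'e_0)\big)$. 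The injectivity of $\iota_{\xi_0}$ then forces $T^{-1}(a'e_0)=0$, so $a'e_0=0$ by the bijectivity of $T$, and consequently $a'\xi_0=(a'e_0)\xi_0=0$. Since $a'\in M'$ and $\xi_0$ is cyclic for $M$, $a'$ vanishes on the dense subspace $M\xi_0$, hence $a'=0$ and therefore $a=0$. This proves that $\eta_0$ is separating for $M$.

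Finally, $M$ now possesses the cyclic vector $\xi_0$ together with the separating vector $\eta_0$, and it is classical that a von Neumann algebra admitting simultaneously a cyclic and a separating vector is in standard form; this completes the proof. The one ingredient not supplied by the Hilbert-module apparatus already developed is precisely this last, classical fact, and I expect it to be the only delicate point. Conceptually, the thing to realise is that one should not hope for $\xi_0$ to be separating itself — in general $\xi_0$ is not separating for $M$, nor is $\eta_0$ cyclic for $M$ — but rather produce the new vector $\eta_0$ and couple it with the given cyclic vector.
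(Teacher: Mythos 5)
Your proof is correct and follows essentially the same route as the paper: injectivity of $Ne_0\ni w\mapsto w\xi_0$, transport of the relation $a\eta_0=0$ through $T$ to get an element of $M'$ killing $\xi_0$, and then the Dixmier--Mar\'echal theorem (the ``classical fact'' you correctly flag as the one external ingredient) to pass from having a cyclic vector and a separating vector to standard form. The only deviations are cosmetic: you prove injectivity via faithfulness of $\om_{\xi_0}$ on $Z$ rather than the paper's computation $zb\xi_0=0$ for all $b\in M$, and you apply $T^{-1}$ instead of $T$ in the middle step.
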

\begin{proof}
We consider only the case of a cyclic vector $\xi_0$ of $M$, the case of a cyclic
vector of $M'$ being reducible to this case, applying it to $M'$ instead of $M$. 

We start by noticing that the map
$$
Ne_0\ni a\longmapsto a\xi_0\in H
$$
is injective. Indeed,  
$$
a\in Ne_0\Rightarrow a^*a\in e_0Ne_0=Ze_0\Rightarrow a^*a=ze_0
$$
for a unique positive element $z\in Z$. Suppose that $a\xi_0=0$. Then
\begin{equation*}
0=ba^*a\xi_0=bze_0\xi_0=bz\xi_0=zb\xi_0\text{ for every }b\in M
\Longrightarrow z=0
\end{equation*}
as $\xi_0$ is cyclic for $M$, so $a^*a=0 \Longleftrightarrow a=0\;\!$.

Let now $x\in M$ be such that $xT^{-1}(e_0)\xi_0=0$ and put 
$$
a:=xT^{-1}(e_0)\in Ne_0\,.
$$ 
Then $a\xi_0=0$ which leads to $a=xT^{-1}(e_0)=0$ for the previous part. Hence 
$T\big(xT^{-1}(e_0)\big)=0$. 

On the other hand, according to \eqref{aia}, $xT^{-1}(e_0)=L^{Ne_0}(x)\big(T^{-1}(e_0)\big)$
implies
\begin{align*}
T\big(xT^{-1}(e_0)\big)=\;&TL^{Ne_0}(x)\big(T^{-1}(e_0)\big) =\big(TL^{Ne_0}(x)T^{-1}\big)e_0 \\
=\;&L^{Ne_0}(x')e_0=x'e_0
\end{align*}
for some $x'\in M'$, uniquely determined by $TL^{Ne_0}(x)T^{-1}=L^{Ne_0}(x')\;\!$.

Since, for every $y\in M$,
$$
0=y\;\! T\big(xT^{-1}(e_0)\big)\xi_0=y\;\! x'e_0\xi_0=y\;\! x'\xi_0=x' y\;\!\xi_0\,,
$$
we have $x'=0$ because $\xi_0$ was supposed to be cyclic for $M$. Thus
$$
L^{Ne_0}(x)=T^{-1}L^{Ne_0}(x')T=0
$$
and Theorem \ref{isotr} leads to $x=0\;\!$.

Now, since $M$ has both a cyclic vector and a separating vector, by the
Dixmier-Mar\'echal Theorem (\cite{DM}, Corollaire 1) we infer that $M$
has a cyclic and separating vector. Hence it acts in standard form.

\end{proof}

Now we go to prove the main result of the present paper, an extension of Theorem \ref{st} to a
characterization theorem for standardness along a von Neumann subalgebra of the centre.

\begin{thm}
\label{st1}
Let $\idd_H\in Z\subset M\subset Z'=:N\subset\cb(H)$ be inclusions of

\noindent von Neumann algebras, with $Z$ abelian. Then the following assertions are
equivalent $:$
\begin{itemize}
\item[(i)] $M$ is in standard form.
\item[(ii)] There exist an abelian projection $e\in N$ of central support

\noindent $z_{N}(e)=\idd_H$, and an involutive, multiplicative, antilinear $*$-map
$j:N\longrightarrow N\!$ fulfilling $j(M)=M'$, $j(z)=z^{*}$ for each $z\in Z(M)$, and $j(e)=e\;\!$.
\item[(iii)] There is a bijective, multiplicative, antilinear map $j:N\longrightarrow N$
fulfilling $j(M)=M'$ and $j(z)=z^{*}$ for each 
$z\in Z(M)$.
\item[(iv)] $M$ is in standard form reduced along $Z$ $($cf. Definition \ref{dekz}$\;\! )$,
that is, for an appropriate abelian projection $e\in N$ with $z_{N}(e)=\idd_H$, there
exists an involutive $Z$-antiunitary map $T:Ne\longrightarrow Ne$ such that $Te=e$ and
\begin{equation}
\begin{split}
&TL^{Ne}(M)T=L^{Ne}(M')\, ,\\
&TL^{Ne}(z)T=L^{Ne}(z^{*})\,,\quad z\in Z(M)\,,
\end{split} \tag{\ref{czaza0}}
\end{equation}
holds true.
\item[(v)] For each abelian projection $e\in N$ with $z_{N}(e)=\idd_H$, there exists an
involutive $Z$-antiunitary map $T:Ne\longrightarrow Ne$ such that \eqref{czaza0} holds true.
\item[(vi)] For each abelian projection $e\in N$ with $z_{N}(e)=\idd_H$, there are an
abelian projection $f\in N$ with $z_{N}(f)=\idd_H$ and
a $Z$-antiunitary map $T:Ne\longrightarrow Nf$ such that $Te=f$ and
\begin{equation}
\label{czaza2}
\begin{split}
&TL^{Ne}(M)T^{-1}=L^{Nf}(M')\, ,\\
&TL^{Ne}(z)T^{-1}=L^{Nf}(z^{*})\,,\quad z\in Z(M)\,.
\end{split} 
\end{equation}
\item[(vii)] There exist abelian projections $e\;\! , f\in N , z_{N}(e)=z_{N}(f)=\idd_H\;\!$,
and a bijective $Z$-antilinear map
$T:Ne\longrightarrow Nf$ such that \eqref{czaza2} holds true.
\end{itemize}
\end{thm}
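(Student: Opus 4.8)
The plan is to prove the equivalences by running the cycle (i)$\Rightarrow$(ii)$\Rightarrow$(iv)$\Rightarrow$(v)$\Rightarrow$(vi)$\Rightarrow$(vii)$\Rightarrow$(i), together with the trivial implication (ii)$\Rightarrow$(iii) and a separate implication (iii)$\Rightarrow$(i). I use throughout that $Z\subset M\subset Z'=N$ forces $Z\subset Z(M)$ and $M,M'\subset N$, and that $L^{Ne}:N\to\cb_Z(Ne)$ is a $*$-isomorphism (Theorem \ref{isotr}) carrying $M'$ onto the commutant of $L^{Ne}(M)$ in $\cb_Z(Ne)$. For (i)$\Rightarrow$(ii), pick a conjugation $J$ on $H$ with $JMJ=M'$ and $JzJ=z^*$ for $z\in Z(M)$; Corollary \ref{prrci7} furnishes an abelian $e\in N$ with $z_N(e)=\idd_H$ and $JeJ=e$, and then $j:=J(\,\cdot\,)J$ is an involutive, multiplicative, antilinear $*$-map of $N=Z'$ onto itself (note $JZJ=Z$, so $JNJ=N$) satisfying $j(M)=M'$, $j(z)=z^*$ on $Z(M)$, and $j(e)=e$; discarding involutivity and the $*$-property gives (iii) for free. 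For (ii)$\Rightarrow$(iv), with $e$ and $j$ as in (ii), set $T:=j|_{Ne}:Ne\to Ne$ (well defined since $j(xe)=j(x)e$); using that $j$ is multiplicative, $*$-preserving and acts as the $*$-operation on $Z$, together with $y^*x\in eNe=Ze$, one checks that $T$ is involutive, $Z$-antiunitary, fixes $e$, and obeys $TL^{Ne}(a)T=L^{Ne}(j(a))$; hence \eqref{czaza0} holds.

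The chain (iv)$\Rightarrow$(v)$\Rightarrow$(vi)$\Rightarrow$(vii) is entirely module-theoretic, based on Proposition \ref{elleem}. Given $(e_0,T_0)$ as in (iv) and any abelian $e$ with $z_N(e)=\idd_H$, a partial isometry $v\in N$ with $v^*v=e$, $vv^*=e_0$ produces the $Z$-unitary $U_v:Ne_0\to Ne$ intertwining $L^{Ne_0}$ with $L^{Ne}$, and then $U_vT_0U_v^{-1}:Ne\to Ne$ is involutive, $Z$-antiunitary and satisfies \eqref{czaza0} — this is (v) (which imposes no condition at $e$). For (v)$\Rightarrow$(vi): if $T:Ne\to Ne$ is as in (v), then $b:=Te\in N$ has $\langle b\mid b\rangle_{Ne}=\langle e\mid e\rangle_{Ne}=\idd$, which forces $b^*b=e$; hence $f:=bb^*$ is an abelian projection with $z_N(f)=\idd_H$, and $S:=U_{b^*}\circ T:Ne\to Nf$, where $U_{b^*}:Ne\to Nf$, $x\mapsto xb^*$, is the $Z$-unitary of Proposition \ref{elleem}, is $Z$-antiunitary with $Se=bb^*=f$ and \eqref{czaza2}. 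Finally (vi)$\Rightarrow$(vii) is immediate.

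Both remaining implications reduce to exhibiting a bijective antilinear operator $\widetilde T:H\to H$ with $\widetilde TM\widetilde T^{-1}=M'$ and $\widetilde Tz\widetilde T^{-1}=z^*$ ($z\in Z(M)$), which by Theorem \ref{st} places $M$ in standard form. For (vii)$\Rightarrow$(i): composing the given $T:Ne\to Nf$ with a $U_v$ as above reduces the data to a bounded bijective $Z$-antilinear $\widehat T:Ne\to Ne$ with $\widehat TL^{Ne}(M)\widehat T^{-1}=L^{Ne}(M')$ and $\widehat TL^{Ne}(z)\widehat T^{-1}=L^{Ne}(z^*)$ for $z\in Z(M)$. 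Since $eNe=Ze$, $z_N(e)=\idd_H$, and $N'=Z$, the commutant of $Z|_{eH}=(eNe)|_{eH}$ in $\cb(eH)$ is $N'|_{eH}=Z|_{eH}$, so $Z|_{eH}$ is maximal abelian; being maximal abelian it is in standard form, so there is a conjugation $c$ on $eH$ with $czc=z^*$ for all $z\in Z$. The isometry $x\otimes\xi\mapsto x\xi$ identifies $H$ with the interior tensor product $Ne\otimes_Z eH$ (its range $NeH=z_N(e)H=H$ being dense), under which each $a\in N$ acts as $L^{Ne}(a)\otimes\id$; therefore $\widetilde T:=\widehat T\otimes_Z c$ is a bounded bijective antilinear operator on $H$ which implements the map $a\mapsto\widehat a$, where $L^{Ne}(\widehat a):=\widehat TL^{Ne}(a)\widehat T^{-1}$, so $\widetilde TM\widetilde T^{-1}=M'$ and $\widetilde Tz\widetilde T^{-1}=z^*$ on $Z(M)$. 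For (iii)$\Rightarrow$(i): fix a conjugation $c_0$ on $H$ with $c_0zc_0=z^*$ for $z\in Z$ (it exists from the multiplicity decomposition of $Z$ acting on $H$), and note that $\gamma:=j\circ\operatorname{Ad}(c_0)$ is an (automatically bounded) linear automorphism of $N=Z'$ acting as the identity on $Z(N)=Z$; such an automorphism of the type $\ty{I}$ algebra $N$ is inner, $\gamma=\operatorname{Ad}(u)$ with $u\in N$ invertible, so $j=\operatorname{Ad}(uc_0)$ and $\widetilde T:=uc_0$ does the job.

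The main obstacle is the descent in (vii)$\Rightarrow$(i): the conditions (iv)--(vii) are stated on the Hilbert module $Ne$, whereas Theorem \ref{st} needs an intertwiner on $H$, and since in general neither $M$ nor $M'$ has a cyclic vector one cannot invoke Proposition \ref{monst} directly. The crucial observation is the identification $H\cong Ne\otimes_Z eH$ with $Z$ maximal abelian on $eH$, which permits tensoring the module antilinear intertwiner $\widehat T$ with a conjugation of $eH$; establishing the boundedness of the resulting $\widetilde T$ and checking that it genuinely conjugates $M$ onto $M'$ and acts as the $*$-operation on $Z(M)$ are the delicate points. The analogous input for (iii)$\Rightarrow$(i) is the innerness of centre-fixing automorphisms of the type $\ty{I}$ von Neumann algebra $N$ (cf.\ \cite{D1}); with that in hand a conjugation straightens $j$ into a spatially implemented map and Theorem \ref{st} concludes. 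When $M$ or $M'$ does possess a cyclic vector $\xi_0$, transporting the module data to $Ne_0$ for the abelian projection $e_0$ of Proposition \ref{prrci} with $e_0\xi_0=\xi_0$ gives a shorter route to (i) via Proposition \ref{monst}, bypassing the explicit realisation on $H$.
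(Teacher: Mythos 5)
Most of your cycle coincides with the paper's: (i)$\Rightarrow$(ii) via Corollary \ref{prrci7} and $j=J(\cdot)J$, the chain (iv)$\Rightarrow$(v)$\Rightarrow$(vi)$\Rightarrow$(vii) via Proposition \ref{elleem} and $v:=Te$, and your (ii)$\Rightarrow$(iv), obtained by restricting $j$ to $Ne$ (legitimate, since $j(e)=e$ and $y^*x\in eNe=Ze$ make $j|_{Ne}$ a $Z$-antiunitary involution), is a harmless variant of the paper's direct (i)$\Rightarrow$(iv). Your (iii)$\Rightarrow$(i), via a conjugation $c_0$ implementing the $*$-operation on $Z$ and the innerness of the centre-fixing algebra automorphism $j\circ\mathrm{Ad}(c_0)$ of the type $\ty{I}$ algebra $N$, is in substance the paper's argument in different packaging: the paper uses Okayasu's theorems to replace $j$ by a $*$-map $j_0=\mathrm{Ad}(a)$-conjugate of $j$ and then writes the spatial implementation explicitly as a strongly convergent sum, which is exactly the content of the Gardner--Okayasu decomposition plus Kaplansky innerness that you are invoking. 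This part is acceptable provided you actually cite those two inputs.

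The genuine gap is in (vii)$\Rightarrow$(i). Condition (vii) provides only a \emph{bijective} $Z$-antilinear map $T:Ne\to Nf$; neither boundedness nor adjointability is assumed, and this is precisely what distinguishes (vii) from (vi). Your construction $\widetilde T:=\widehat T\otimes_Z c$ on $H\cong Ne\otimes_Z eH$ needs $\widehat T$ to be adjointable (equivalently bounded, by Theorem \ref{mmainn}(ii)): adjointability is used both to check that $\widehat T\otimes c$ kills the kernel of the canonical map from the algebraic balanced tensor product onto $H$ (one computes $\|\sum_i\widehat Tx_i\otimes c\xi_i\|^2=\langle(\widehat T^*\widehat T\otimes\mathrm{id})v,v\rangle$) and to extend $\widetilde T$ to a bijective antilinear operator on all of $H$ as Theorem \ref{st} requires. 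You silently write ``bounded bijective $Z$-antilinear $\widehat T$'', but composing the given $T$ with the $Z$-unitary $U_v$ cannot create boundedness, and no automatic-continuity statement for such $T$ is proved or available off the shelf. The paper's proof is organized specifically to avoid this: it interpolates a localized condition (viii), decomposes $\idd_H$ into mutually orthogonal central projections $p_\gamma$ such that each $M_{p_\gamma}$ is either properly infinite or such that $M_{p_\gamma}$ or $(M')_{p_\gamma}$ has a cyclic vector, and then treats the properly infinite pieces with the purely algebraic Proposition 2.3 of \cite{FZ} and the cyclic pieces with Proposition \ref{monst} --- neither of which requires $T$ to be bounded. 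You mention the cyclic-vector shortcut via Proposition \ref{monst} at the end, but the missing idea is that the general case reduces to it (together with the properly infinite case) by this central decomposition; as written, your (vii)$\Rightarrow$(i) proves the implication only under the stronger, unstated hypothesis that $T$ is bounded.
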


\begin{proof}
(i)$\;\!\Rightarrow$(ii) As $M$ is acting in standard form on $H$ with conjugation $J$, we can 
show that $JxJ\in Z'$ whenever $x\in N$. Indeed, as
$Z\subset Z(M)$, we get for each $z\in Z$,
$$
JxJz=Jxz^{*}J=Jz^{*}xJ=zJxJ\,.
$$
Thus we can define $j:N\rightarrow N$ by setting
$$
j(x):=JxJ\,,\quad x\in N\,.
$$
Clearly, $j$ is an involutive, multiplicative, antilinear $*$-map satisfying $j(M)=M'$ and $j(z)=z^*$
for every $z\in Z(M)$. Additionally, Corollary \ref{prrci7} guarantees the existence of an abelian
projection $e\in N$ of central

\noindent support $z_{N}(e)=\idd_H$ such that $j(e)=JeJ=e\;\!$.
\smallskip

(ii)$\;\!\Rightarrow$(iii) is obvious.
\smallskip

(iii)$\;\!\Rightarrow$(i) We start by noticing that $j$ is a normal map. Indeed, if $\f$ is a faithful,
semifinite, normal
weight on $N$, $\pi_\f$ the associated GNS representation, and $J_\f$ the corresponding
modular conjugation, then 
$$
\th_\f:\pi_\f(N)\ni\pi_\f(x)\longmapsto J_\f\pi_\f(x)J_\f\in\pi_\f(N)'
$$
is a bijective, multiplicative, antilinear map. Thus
$$
\th_\f\circ\pi_\f\circ j:N\longrightarrow\pi_\f(N)'
$$
is an algebra isomorphism, so it is normal by Theorem II in \cite{O}. Thus 
$$
j=\big(\th_\f\circ\pi_\f\big)^{-1}\circ\big(\th_\f\circ\pi_\f\circ j\big)
$$
is normal too.

To end the proof, we go now to construct a bijective antilinear

\noindent operator $T:H\longrightarrow H$
satisfying condition \eqref{thm-cond} in Theorem \ref{st}.

As $j$ acts on $Z$ as the $*$-operation, the matter can be reduced to the case of
homogeneous $N$. In that case, Section 9 of \cite{K} suggests us how to proceed.

Fix an abelian projection $e\in N$ with $z_{N}(e)=\idd$ and choose a family of partial isometries
$(v_{\iota})_{\iota\in I}$ in $N$ such that
$$
v_{\iota}^{*}v_{\iota}=e\text{ for all }\iota\in I,\quad \sum_{\iota\in I}v_{\iota}v_{\iota}^{*}=\idd\,.
$$
Then $(Z_e)'=N_e=Z_e\;\!$, so $Z_e$ is acting in standard form on $eH$.
Let $J_{e}:eH\longrightarrow eH$ be a modular conjugation of $Z_{e}$.
It fulfils
\begin{equation*}
J_{e}zeJ_{e}=z^{*}e\,,\quad z\in Z\, .
\end{equation*}

Though $j$ is not necessarily a $*$-map, we can adjust it to a $*$-map $j_0 : N\longrightarrow N$
by composition with an inner algebra automorphism of $N$.
For we consider a $*$-representation $\pi : N\longrightarrow\cb (K)$ such that the von Neumann
algebra $\pi (N)$ is in standard form with conjugation $J_K\;\!$.
By Theorem I in \cite{O} there exists an invertible $0\leq a\in N$ such that
$$
\F : N\ni x \longmapsto J_K\pi \big( j(axa^{-1})\big) J_K\in \pi (N)'
$$
is a $*$-isomorphism. Define
$$
j_0(x):=j(axa^{-1})\,,\quad x\in N\;\! .
$$
As $j_0(\,{\bf\cdot}\,)=\pi^{-1}\big(J_K\F(\,{\bf\cdot}\,)J_K\big)$,
it is readily seen that $j_0 : N\longrightarrow N$ is a

\noindent bijective, multiplicative, antilinear $*$-map. It turns out that $a\in Z(M)'$, so
we have also
\begin{equation*}
j_0(z)=j(aza^{-1})\! =j(z)=z^*\;\! ,\quad z\in Z(M)\;\! .
\end{equation*}
Indeed, using that  $j$ is multiplicative, acting on $Z(M)$ as the $*$-operation,
and $j_0$ is a $*$-map, we deduce successively for every $z\in Z(M)\;\!$:
\begin{equation*}
\begin{split}
j(aza^{-1})=\;&j_o(z)=j_0(z^*)^*=j(az^*a^{-1})^*\! =\big( j(a)j(z^*)j(a)^{-1}\big)^* \\
=\;&\big( j(a)zj(a)^{-1}\big)^*=j(a)^{-1}z^*j(a)=j(a)^{-1}j(z)j(a) \\
=\;&j(a^{-1}z\;\! a)\;\! , \\
aza^{-1}=\;&a^{-1}z\;\! a \Longleftrightarrow a^2z=z\;\! a^2 \Longleftrightarrow a\;\! z=z\;\! a\;\! .
\end{split}
\end{equation*}
Concluding, the $*$-map $j_0$ enjoys the same properties as $j$, except the condition $j_0(M)= M'$.
\smallskip

As $j_0(e)$ is again an abelian projection with full central support, there exists a partial
isometry $v\in N$ such that $v^{*}v=e\;\!$, $vv^{*}=j_0(e)$. Define
\begin{equation}
\label{suun}
T:=j(a^{-1})\sum_{\iota\in I}j_0(v_{\iota})vJ_{e}v_{\iota}^{*}\,.
\end{equation}
It is standard matter to see that the above sum converges in the strong
operator topology and defines a bijective antilinear operator acting on $H$.
By taking into account the previous considerations and the fact that
$v_{\iota}^{*}xv_{\kappa}=z_{\iota \kappa}e$ with a uniquely determined element $z_{\iota \kappa}\in Z$
for any $\iota ,\kappa\in I$,
we get for  $x\in N$,
\medskip

\noindent\hspace{1.36 cm}$\displaystyle
TxT^{-1}=j(a^{-1})\bigg(\sum_{\iota,\kappa\in I}j_0(v_{\iota})vJ_{e}v_{\iota}^{*}xv_{\kappa}J_{e}
v^{*}j_0(v_{\kappa}^{*})\bigg)j(a)$

\noindent\hspace{2.7 cm}$\displaystyle =
j(a^{-1})\bigg(\sum_{\iota,\kappa\in I}j_0(v_{\iota})vJ_{e}z_{\iota \kappa}eJ_{e}v^{*}j_0(v_{\kappa}^{*})\bigg)j(a)$

\noindent\hspace{2.7 cm}$\displaystyle =
j(a^{-1})\bigg(\sum_{\iota,\kappa\in I}j_0(v_{\iota})vz_{\iota \kappa}^{*}ev^{*}j_0(v_{\kappa}^{*})\bigg)j(a)$

\noindent\hspace{2.7 cm}$\displaystyle =
j(a^{-1})\bigg(\sum_{\iota,\kappa\in I}j_0(v_{\kappa})z_{\iota \kappa}^{*}j_0(e)j_0(v_{\kappa}^{*})\bigg)j(a)$

\noindent\hspace{2.7 cm}$\displaystyle =
j(a^{-1})\bigg(\sum_{\iota,\kappa\in I}j_0(v_{\iota}z_{\iota \kappa}ev_{\kappa}^{*})\bigg)j(a)$

\noindent\hspace{2.7 cm}$\displaystyle =
j(a^{-1})\bigg(\sum_{\iota,\kappa\in I}j_0(v_{\iota}v_{\iota}^{*}xv_{\kappa}v_{\kappa}^{*})\bigg)j(a)$


\noindent\hspace{2.7 cm}$\displaystyle = j(a^{-1})j_0(x)j(a)=j(a^{-1})j(axa^{-1})j(a)=j(x)$
\medskip

\noindent as $j_0$ is a normal map. Consequently, the assumptions on $j$ in (iii)
imply that the operator $T$ defined in \eqref{suun} satisfies condition \eqref{thm-cond}.
\smallskip

(i)$\;\!\Rightarrow$(iv) As $M$ is assumed to be in standard form with conjugation $J$, by
Corollary \ref{prrci7} there exists an abelian projection $e\in N$ with central support
$z_N(e)=\idd_H$ and such that $e=JeJ$. Since
$$
JNJ=JZ'J=(JZJ)'=Z'=N\;\! ,
$$
we have
$$
J(Ne)J=(JNJ)(JeJ)=(JNJ)\;\! e=Ne\;\! .
$$
Therefore,
$$
T:Ne\ni x\mapsto JxJ\in Ne
$$
is an involutive $Z$-antilinear map. It is actually $Z$-antiunitary. Indeed, for every $x,y\in Ne$,
\begin{align*}
\F_e\big((JyJ)^*JxJ\big)e=\;&(JyJ)^*JxJ=Jy^*xJ=J\F_e(y^*x)eJ \\
=\;&\F_e(y^*x)^*e
\end{align*}
implies
$$
(Tx\!\mid\!Ty)_{\substack{ {} \\ Ne}}=\F_e\big((JyJ)^*JxJ\big)=\F_e(y^*x)^*=(x\!\mid\!y)^*_{\substack{ {} \\ Ne}}=(y\!\mid\!x)_{\substack{ {} \\ Ne}}\,.
$$

For $a\in N$ and $x\in Ne$, we compute
\begin{align*}
TL^{Ne}(a)T^{-1}x=\;&TL^{Ne}(a)JxJ=TaJxJ
=J(aJxJ)J=JaJx \\
=\;&L^{Ne}(JaJ)x
\end{align*}
which leads to
$$
TL^{Ne}(a)T^{-1}=L^{Ne}(JaJ)\,,\quad a\in N\,.
$$
Since $JMJ=M'$ and $JzJ=z^*$ for $z\in Z(M)\;\!$, we conclude that \eqref{czaza0} holds true.
\smallskip

(iv)$\;\!\Rightarrow$(v) Assuming (iv), let $e_0$ be an abelian projection of central support $\idd_H$ in $N$,
and $T_{e_0}$ a $Z$-antiunitary involution $Ne_0\longrightarrow Ne_0\;\!$, which satisfies
\eqref{czaza0}.

Consider now an arbitrary abelian projection $e\in N$ with $z_N(e)=\idd_H$, together with a
partial isometry $v\in N$ realising the equivalence $e\sim e_0\;\!$:
$$
v^*v=e_0\, ,\quad vv^*=e\, .
$$
By the choice of $e_0$ and $T_{e_0}\;\!$, the equality
$$
T_{e_0}L^{Ne_0}(a)T_{e_0} =L^{Ne_0}\big( \Phi (a)\big)
$$
defines a bijective, multiplicative, antilinear map $\Phi : M\longrightarrow M'$ such
that $\Phi (z)=z^*$ for all $z\in Z(M)\;\!$.
On the other hand, according to Proposition \ref{elleem}, the map 
$$
U_v:Ne\ni x\longmapsto xv\in Ne_0
$$
is $Z$-unitary with inverse $U_{v^*}$ and
$$
U_vL^{Ne}(a)=L^{Ne_0}(a)U_v\,,\quad a\in N\,.
$$
Therefore,
$$
T_e:=U_{v^*}T_{e_0}U_v : Ne\longrightarrow Ne
$$
is an involutive $Z$-antiunitary operator and it satisfies \eqref{czaza0} because, by the aboves,
we have for every $a\in M$
\begin{align*}
&T_eL^{Ne}(a)T_e^{-1}=U_{v^*}T_{e_0}U_vL^{Ne}(a)U_{v^*}T_{e_0}U_v\\
=\;&U_{v^*}T_{e_0}L^{Ne_0}(a)T_{e_0}U_v
=U_{v^*}L^{Ne_0}\big( \Phi (a)\big) U_v=L^{Ne}\big( \Phi (a)\big)\, .
\end{align*}


(v)$\;\!\Rightarrow$(vi) Let $e\in N$ be any abelian projection having central support
$z_N(e)=\idd_H\;\!$. According to (v), there exists a $Z$-antiunitary involution
$T_0 : Ne\longrightarrow Ne$ such that \eqref{czaza0} is satisfied.
Let $\Phi$ denote the bijective map $M\longrightarrow M'$ defined by
\begin{equation}\label{Phi}
T_0\;\! L^{Ne}(a)T_0 =L^{Ne}\big( \Phi (a)\big)\;\! ,\qquad a\in N\;\! .
\end{equation}

Since $T_0$ is $Z$-antiunitary, setting $v:=T_0\;\! e\in Ne\;\!$, we have
$$
v^*v=(v\!\mid\!v)_{Ne}\;\! e = (e\!\mid\!e)_{Ne}\;\! e = e\;\! ,
$$
so $v\in N$ is a partial isometry with $v^*v=e\;\!$. Therefore $f:=v v^*\in N$ is an
abelian projection with $z_N(f)=\idd_H$ and $v$ realizes the equivalence $e\sim f$:

\centerline{$v^*v=e\;\! ,\quad v v^*=f\, .$}
\smallskip

\noindent By Proposition \ref{elleem}, the map
\begin{equation*}
U_v : Nf\ni y\longmapsto y\;\! v\in Ne
\end{equation*}
is $Z$-unitary with inverse $U_{v^*}$ and
\begin{equation}\label{intertw}
U_v L^{Nf}(a) = L^{Ne}(a)U_v\;\! ,\quad a\in N\;\! .
\end{equation}

Put $T:=U_{v^*}T_0 : Ne\longrightarrow Nf\;\!$. It is clearly $Z$-antiunitary and
$$
T e=U_{v^*}T_0\;\! e =U_{v^*} v = v v^* = f\;\! .
$$
Furthermore, we have for every $a\in M$
\begin{equation*}
TL^{Ne}(a)T^{-1\! }=U_{v^*}T_0\;\! L^{Ne}(a)T_0\;\! U_v \!\overset{(\ref{Phi})}{=}\!
U_{v^*}L^{Ne}\big( \Phi (a)\big) U_v \!\overset{(\ref{intertw})}{=}\! L^{Nf}\big( \Phi (a)\big) ,
\end{equation*}
and this implies \eqref{czaza2}.
\smallskip

(vi)$\;\!\Rightarrow$(vii) is obvious.
\smallskip

We end the proof of the theorem by showing that (vii)$\;\!\Rightarrow$(i). We will
split this implication in (vii)$\;\!\Rightarrow$(viii) and (viii)$\;\!\Rightarrow$(i), where
(viii) is the following intermediate condition:
\smallskip

\begin{itemize}
\item[(viii)] For each projection $p\in Z(M)$
and for each abelian projection $g\in pNp$ with $z_{pNp}(g)=p\;\!$,
there exists an abelian projection $e_{p,g}\in N$ with $z_N(e_{p,g})=\idd_H$
such that
\[
e_{p,g}p=p\;\! e_{p,g}=g\;\! ,\quad pNe_{p,g}=(pNp)g\;\! ,
\]
as well as a bijective $Z$-antilinear map $T_{p,g} : Ne_{p,g}\longrightarrow Ne_{p,g}$
for which
\begin{equation}
\label{czaza1}
\begin{split}
T_{p,g}L^{Ne_{p,g}}(M){T_{p,g}}^{\! -1}=\;&L^{Ne_{p,g}}(M')\, ,\\
T_{p,g}L^{Ne_{p,g}}(z){T_{p,g}}^{\! -1}=\;&L^{Ne_{p,g}}(z^{*})\,,\quad z\in Z(M)
\end{split} 
\end{equation}
holds true.
\end{itemize}
\smallskip

First we prove implication (vii)$\;\!\Rightarrow$(viii).

We start by noticing that the equality
\smallskip

\centerline{$TL^{Ne}(a)T^{-1}=L^{Nf}(\Psi(a))\;\! ,\quad a\in M$}
\smallskip

\noindent defines a bijective, multiplicative antilinear map $\Psi : M\longrightarrow M'$
such that $\Psi(z)=z^*$ for each $z\in Z(M)\;\!$.
\smallskip

Let the projection $p\in Z(M)$ and the abelian projection $g\in pNp$ with
$z_{pNp}(g)=p$ be arbitrary. According to Theorem \ref{nopr1}, (ii), there
exists an abelian projection $e_{p,g}\in N$ with $z_N(e_{p,g})=\idd_H$
such that (among other properties)
\smallskip

\centerline{$e_{p,g}p=p\;\! e_{p,g}=f\, ,\; pNe_{p,g}=(pNp)g\;\! ,$}
\smallskip

\noindent Since the abelian projections $e_{p,g}\;\!$, $e\;\!$, $\! f$ are equivalent in $N$,
there are partial isometries $v\;\! ,w\in N$ such that
\begin{align*}
&v^* v=e\,,\quad v\;\! v^* =e_{p,g}\,,\\
&w^* w=e_{p,g}\,,\quad w\;\! w^* =f\,.
\end{align*}
By Proposition \ref{elleem}, the maps
\begin{align*}
&U_{v}:Ne_{p,g}\ni x\longmapsto xv\in Ne\,,\\
&U_{w}:Nf\ni y\longmapsto yw\in Ne_{p,g}
\end{align*}
are $Z$-unitaries and, for each $a\in N$,
\begin{align*}
&U_{v}L^{Ne_{p,g}}(a)=L^{Ne}(a)U_{v}\,,\\
&U_{w}L^{Nf}(a)=L^{Ne_{p,g}}(a)U_{w}
\end{align*}
hold true. Denoting
$$
T_{p,g}:=U_{w}TU_{v}:Ne_{p,g}\longrightarrow Ne_{p,g}\,,
$$
$T_{p,g}$ is a bijective $Z$-antilinear map. For each $a\in M$, we obtain 
\medskip

\noindent\hspace{3 cm}$T_{p,g} L^{Ne_{p,g}}(a)=U_{w}TU_{v}L^{Ne_{p,g}}(a)$
\smallskip

\noindent\hspace{5.41 cm}$=U_{w}TL^{Ne}(a)U_{v}$
\smallskip

\noindent\hspace{5.41 cm}$=U_{w}\big(TL^{Ne}(a)T^{-1}\big)TU_{v}$
\smallskip

\noindent\hspace{5.41 cm}$=U_{w}L^{Nf}\big(\Psi(a)\big)TU_{v}$
\smallskip

\noindent\hspace{5.41 cm}$=L^{Ne_{p,g}}\big(\Psi(a)\big)U_{w}TU_{v}$
\smallskip

\noindent\hspace{5.41 cm}$=L^{Ne_{p,g}}\big(\Psi(a)\big)T_{p,g}\,,$
\smallskip

\noindent that is
$$
T_{p,g} L^{Ne_{p,g}}(a)T_{p,g}^{\! -1}=L^{Ne_{p,g}}(\Psi(a))\,.
$$
In particular, (\ref{czaza1}) holds true.
\smallskip

Finally we prove also implication (viii)$\;\!\Rightarrow$(i).

Let us decompose $M$ in its properly infinite and finite part: let $p_0\in Z(M)$ be
the projection such that $Mp_0$ is properly infinite and $M(\idd_H-p_0)$ finite.

Since the finite projection $\idd_H-p_0$ is piecewise of countable type (see  \cite{SZ1}, Lemma 7.2), we can further decompose $M(\idd_H-p_0)$ in
portions of countable type: there exists a family $(p_\iota)_{\iota\in I}$
of mutually orthogonal central projections in $Z(M)$ such that $\sum\limits_{\iota\in I}p_\iota=\idd_H-p_0$
and all von Neumann algebras $M_{p_\iota}\subset B(p_\iota H)$ are of countable type.

Finally, according to Lemma 7.18 of \cite{SZ1}, each $p_\iota$ is the (orthogonal) sum of
two projections $p^{(1)}_\iota, p^{(2)}_\iota\in Z(M)$ such that $M_{p^{(1)}_\iota}$ and 
$\big(M_{p^{(2)}_\iota}\big)'=(M')_{p^{(2)}_\iota}$ are cyclic von Neumann algebras.

Summarising,
$$
p_0+\sum_{\iota\in I}p^{(1)}_\iota+\sum_{\iota\in I}p^{(2)}_\iota=\idd_H
$$
where $M_{p_0}$ is properly infinite, $M_{p^{(1)}_\iota}$ and $(M')_{p^{(2)}_\iota}$ are cyclic, $\iota\in I$.

Thus, denoting $\G:=\{0\}\bigcup\big(\{1,2\}\times I\big)$
and
\begin{align*}
&p_\g=p_0\,\,\text{for}\,\,\g=0\,,\\
&p_\g=p_\iota^{(j)}\,\,\text{for}\,\,\g=(j,\iota)\,,\,\,\,\,j=1,2\,,\,\iota\in I\,,
\end{align*}
we have a family $(p_\g)_{\g\in\G}$ of mutually orthogonal projections in $Z(M)$ such that
$\sum_{\g\in\G}p_\g=\idd_H$ and, for each $\g\in\G$,
\begin{itemize}
\item[-] either $M_{p_\g}$ is properly infinite,
\item[-] or $M_{p_\g}$ admits a cyclic vector $\xi_\g\in p_\g H$,
\item[-] or $(M')_{p_\g}$ admits a cyclic vector $\xi_\g\in p_\g H$.
\end{itemize}
By showing that, for each $\g\in\G$, the reduced/induced von Neumann algebra
$M_{p_\g}$ is in standard form, it will follow that $M$ is in standard form.
\smallskip

Let $\g\in\G$ be arbitrary, and let $g_\g$ denote
\begin{itemize}
\item[-] any abelian projection in $p_\g Np_\g$ with $z_{p_\g Np_\g}(g_\g)=p_\g$
if $M_{p_\g}$ is properly infinite,
\item[-] the abelian projection in $p_\g Np_\g$ satisfying $g_\g\xi_\g =\xi_\g\;\!$,
whose existence and uniqueness is guaranteed by Proposition \ref{prrci} and which
necessarily satisfies $z_{p_\g Np_\g}(g_\g)=p_\g\;\!$, if $M_{p_\g}$ or $(M')_{p_\g}$
admits a cyclic vector $\xi_\g\in p_\g H$.
\end{itemize}
By the assumed condition (viii), there exist an abelian projection $e_\g\in N$ with
$z_N(e_\g)=\idd_H$ such that
\begin{equation}
\label{sssste}
e_\g p_\g =p_\g e_\g =g_\g\;\! ,\; p_\g Ne_\g =(p_\g Np_\g )g_\g\,,
\end{equation}
as well as a bijective $Z$-antilinear map $T_\g : Ne_\g\longrightarrow
Ne_\g$ for which
\begin{equation}
\label{czaza1bis}
\begin{split}
& T_\g L^{Ne_\g}(M){T_\g}^{\! -1}=L^{Ne_\g}(M')\, ,\\
& T_\g L^{Ne_\g}(z){T_\g}^{\! -1}=L^{Ne_\g}(z^{*})\,,\quad z\in Z(M)
\end{split} 
\end{equation}
holds true. We notice that the equality
\begin{equation}
\label{implement}
T_\g L^{Ne_\g}(a){T_\g}^{\! -1}=L^{Ne_\g}(\Psi_\g (a))\, ,\quad a\in M
\end{equation}
defines a bijective, multiplicative antilinear map $\Psi_\g : M\longrightarrow M'$
such that $\Psi_\g (z)=z^*$ for each $z\in Z(M)\;\!$.

According to the second equality in (\ref{czaza1bis}), we have for each projection
$p\in Z(M)$,
\smallskip

\centerline{$T_\g L^{Ne_\g}(p)=\big( T_\g L^{Ne_\g}(p){T_\g}^{\! -1}\big) T_\g
=L^{Ne_\g}(p)T_\g\;\! ,$}
\smallskip

\noindent in particular $T_\g$ and $L^{Ne}(p_\g )$ commute.

Consequently, by  (\ref{implement}) and the first equality in (\ref{czaza1bis}),
\medskip

\noindent\hspace{0.46 cm}$L^{Ne_\g}(\Psi_\g (Mp_\g ))=T_\g L^{Ne_\g}(Mp_\g ){T_\g}^{\! -1}$
\smallskip

\noindent\hspace{3.45 cm}$=T_\g L^{Ne_\g}(M)L^{Ne_\g}(p_\g){T_\g}^{\! -1}$
\smallskip

\noindent\hspace{3.45 cm}$=T_\g L^{Ne_\g}(M){T_\g}^{\! -1}L^{Ne_\g}(p_\g)
=L^{Ne_\g}(M')L^{Ne_\g}(p_\g)$
\smallskip

\noindent\hspace{3.45 cm}$=L^{Ne_\g}(M'p_\g)\,,$
\smallskip

\noindent and therefore the restriction of $\Psi_\g$ to $Mp_\g$ is a bijective,
multiplicative antilinear map onto $M'p_\g\;\!$, which acts on the common
centre $Z(M)p_\g$ clearly as the $*$-operation. It follows that, associating
to every $a\lceil_{p_\g H}$ with $a\in M$ the operator
$\Psi_\g (a)\lceil_{p_\g H}\;\!$, we get a bijective, multiplicative antilinear map
of the von Neumann algebra $M_{p_\g}\subset B(p_\g H)$ onto its commutant
$(M_{p_\g})'=(M')_{p_\g}\;\!$, which acts on the centre $Z(M)_{p_\g}$ as the
$*$-operation.

Thus, if the reduced/induced von Neumann algebra $M_{p_\g}$ is properly
infinite, Proposition 2.3 of \cite{FZ} yields its standardness.

It remains to show the standardness of $M_{p_\g}$ also in the case when
$M_{p_\g}$ or $(M')_{p_\g}$ admits a cyclic vector $\xi_\g\in p_\g H$.
We recall that in this case the abelian projection $g_{\g}$ in $p_{\g}Np_{\g}$
was chosen such that $g_{\g}\xi_\g =\xi_\g\;\!$.

Using again the commutation of $T_\g$ and $L^{Ne}(p_\g )\;\!$, we obtain for
every $x\in N$
\smallskip

\centerline{$T_\g (p_\g x e_\g) =\big( T_\g L^{Ne_{\g}}(p_\g )\big) (x e_\g )
=\big( L^{Ne_{\g}}(p_\g )T_\g \big) (x e_\g )=p_\g T_\g (x e_\g )$}
\smallskip

\noindent\hspace{2.3 cm}$\in p_\g Ne_\g\, ,$
\smallskip

\noindent that is $T_\g$ leaves invariant $p_\g Ne_\g\stackrel{\eqref{sssste}}{=}
(p_\g Np_\g )g_\g\;\! .$
Similarly, also ${T_\g}^{\! -1}$ leaves $(p_\g Np_\g )g_\g$ invariant, so the
restriction $T_\g\lceil_{(p_\g Np_\g )g_\g}$ of $T_\g$ to $(p_\g Np_\g )g_\g$
is a bijection $\widetilde{T_\g}:= T_\g\lceil_{(p_\g Np_\g )g_\g} : (p_\g Np_\g )g_\g
\longrightarrow (p_\g Np_\g )g_\g\;\!$.

Using (\ref{implement}), we obtain for every $a\in p_\g Mp_\g$
and $x\in (p_\g Np_\g )g_\g$
\smallskip

\centerline{$\widetilde{T_\g} (ax)=\big( T_{\g}L^{Ne_{\g}}(a)\big) x =
\Big( L^{Ne_{\g}}\big(\Psi_{\g}(a)\big) T_{\g}\Big) x =
\Psi_{\g}(a) (\widetilde{T_{\g}}x)\, ,$}
\smallskip

\noindent hence
\smallskip

\centerline{$\widetilde{T_{\g}} L^{(p_\g Mp_\g )g_\g} (a) =
L^{(p_\g Mp_\g )g_\g} \big(\Psi_{\g}(a)\big) \widetilde{T_{\g}}\, ,\quad
a\in p_\g Mp_\g\, .$}
\smallskip

\noindent Now we can apply Proposition \ref{monst} to
\begin{itemize}
\item[-] the reduced/induced von Neumann algebra $M_{p_{\g}}\subset B(p_{\g}H)\;\!$,
\item[-] the cyclic vector $\xi_{\g}\;\!$,
\item[-] the abelian projection  $g_{\g}\lceil_{p_\g H}$ in $N_{p_{\g}}\;\!$,
\item[-] the bijective $Z$-antilinear map $N_{p_{\g}}(g_{\g}\lceil_{p_\g H})\longrightarrow
N_{p_{\g}}(g_{\g}\lceil_{p_\g H})$ induced by $\widetilde{T_{\g}}$
\end{itemize}
and deduce that the von Neumann algebra $M_{\g}$ is in standard form.

\end{proof}


Theorem \ref{st1} has a significant application in the theory of tensor products of $W^*$-algebras over a common $W^*$-subalgebra.

More precisely, let $M_1$, $M_2$, $Z$ be $W^*$-algebras with $Z$ abelian such that $Z$ is identified with $W^*$-subalgebras of the centres $Z(M_1)$, $Z(M_2)$ via some faithful normal $*$-homomorphisms $\iota_j:Z\rightarrow Z(M_j)$, $j=1,2$.

A pair $(\pi_1,\pi_2)$ of spatial representations $\pi_j:M_j\rightarrow \cb(H)$, $j=1,2$, is called a {\it splitting representation} of the quintuple $\big(M_1,M_2,Z,\iota_1,\iota_2\big)$ whenever
\begin{itemize}
\item[1)] $\pi_1\circ\iota_1=\pi_2\circ\iota_2$;
\item[2)] there exists a type $\ty{I}$ von Neumann algebra $N\subset\cb(H)$ satisfying
\begin{align*}
(\pi_j\circ\iota_j)(Z)=&Z(N)\,,\,\,j=1,2\,,\\
\pi_1(M_1)\subset& N\,,\,\,\pi_2(M_2)\subset N'\,.
\end{align*}
\end{itemize}
Roughly speaking, the images $\pi_1(M_1)$ and $\pi_2(M_2)$ can be separated by a type $\ty{I}$ von Neumann algebra whose centre coincides with the identified subalgebras $(\pi_1\circ\iota_1)(Z)$, $(\pi_2\circ\iota_2)(Z)$.

Splitting representations $(\pi_1,\pi_2)$ exist for any quintuple 
$\big(M_1,M_2,Z,\iota_1,\iota_2\big)$ as above (\cite{SZ}, Lemma 5.2). In addition, if $(\r_1,\r_2)$
is another splitting representation of the same quintuple, then there exists a (unique) $*$-isomorphism
$\pi_1(M_1)\bigvee\pi_2(M_2)\rightarrow \r_1(M_1)\bigvee\r_2(M_2)$ sending $\pi_1(M_1)$ in $\r_1(M_1)$
and $\pi_2(M_2)$ in $\r_2(M_2)$ (\cite{SZ}, Lemma 5.4).
Thus the $W^*$-algebra $\pi_1(M_1)\bigvee\pi_2(M_2)$ can be called (as in \cite{SZ}) the {\it $W^*$-tensor
product of $M_1$ and $M_2$ over the common $W^*$-subalgebra $Z$}. Notice that, for $Z=\bc$, the above
construction reduces to the usual tensor product $M_1\ots M_2$.

It can be proved that if $(\pi_1,\pi_2)$ is a splitting representation of the quintuple
$\big(M_1,M_2,Z,\iota_1,\iota_2\big)$ as above, and $\pi$ is a standard representation of the $W^*$-algebra 
$\pi_1(M_1)\bigvee\pi_2(M_2)$, then $(\pi\circ\pi_1,\pi\circ\pi_2)$ is still a splitting representation of
$\big(M_1,M_2,Z,\iota_1,\iota_2\big)$. The rather involved proof of such a result, provided in the
forthcoming paper \cite{FZ1}, heavily relies on the above Theorem \ref{st1}.
\smallskip

{\bf Acknowledgement.}
We are indebted to the referee for a careful reading of the manuscript.
His suggestions contributed to make the exposition more precise and fluent.

\bigskip\bigskip

\end{document}